\documentclass[a4paper,11pt,reqno]{amsart}

\usepackage{xurl}
\usepackage{graphicx}
\usepackage{graphics}
\usepackage{relsize}
\usepackage{caption}
\usepackage{amsmath}
\usepackage{amssymb}
\usepackage{amsthm}
\usepackage{float}
\usepackage{pst-node}
\usepackage{tikz-cd} 
\usepackage[original]{imakeidx}
\makeindex[columns=2, title=Index]
\usepackage{hyperref}
\usepackage{cleveref}
\usepackage{needspace}
\usepackage{etoolbox}

\usepackage{geometry}
\author{Francisco Araújo}
\title{Sarnak's Program for Erd\H{o}s Sieves. Part I: Topological Dynamics and Light Tails}

\theoremstyle{plain}
\newtheorem{theorem}{Theorem}[section]

\newtheorem{lemma}[theorem]{Lemma}
\newtheorem{definition}[theorem]{Definition}
\newtheorem{proposition}[theorem]{Proposition}
\newtheorem{corollary}[theorem]{Corollary}
\newtheorem{conjecture}[theorem]{Conjecture}

\theoremstyle{remark}
\newtheorem{example}[theorem]{Example}
\newtheorem{remark}[theorem]{Remark}

\makeatletter

\pretocmd{\theorem}{\needspace{4\baselineskip}}{}{}
\pretocmd{\lemma}{\needspace{3\baselineskip}}{}{}
\pretocmd{\proposition}{\needspace{4\baselineskip}}{}{}
\pretocmd{\corollary}{\needspace{3\baselineskip}}{}{}
\pretocmd{\conjecture}{\needspace{3\baselineskip}}{}{}

\pretocmd{\definition}{\needspace{2\baselineskip}}{}{}

\makeatother

\DeclareMathOperator{\Gen}{Gen}
\DeclareMathOperator{\vol}{vol}
\DeclareMathOperator{\hpc}{h_{\text{pc}}}
\DeclareMathOperator{\htop}{h_{\text{top}}}

\newcommand{\q}{\mathbb{Q}}
\newcommand{\z}{\mathbb{Z}}

\newcommand{\cb}{\mathcal{B}}
\newcommand{\cbr}{\mathcal{B}_R}

\newcommand{\co}{\mathcal{O}}

\newcommand{\cf}{\mathcal{F}}

\newcommand{\fr}{\mathcal{F}_R}

\newcommand{\frf}{\mathcal{F}_{R^f}}

\newcommand{\frp}{\mathcal{F}_{R'}}

\newcommand{\ga}{\mathfrak{a}}
\newcommand{\gb}{\mathfrak{b}}
\newcommand{\gc}{\mathfrak{c}}
\newcommand{\gp}{\mathfrak{p}}

\newcommand{\gr}{G_{R}}

\newcommand{\mmp}{\mathbb{P}}

\newcommand*{\one}{\text{\usefont{U}{bbold}{m}{n}1}}

\begin{document}
	
	\begin{abstract}
		This paper is the first part of a two-part article where we generalize Sarnak's program to sets where we remove congruence classes modulo some infinite set $\mathcal{B}$ of ideals of an étale $\mathbb{Q}-$algebra $K$, which we denote by Erdős sieves. We define some light tail conditions on a sieve $R$, and show how these are related to the genericity under the Mirsky measure of the set of $R-$free numbers, which are the algebraic integers of $K$ not contained in any of the congruence classes in $R$. We also show that Erdős $\mathcal{B}-$free systems in any étale $\mathbb{Q}-$algebra satisfy these light tail conditions, so our results generalize Sarnak's program to Erdős $\mathcal{B}-$free systems over any étale $\mathbb{Q}-$algebra.
	\end{abstract}
	
\maketitle

	\section{Introduction}
	
	This is the first part of a series of two papers generalizing Sarnak's program for Erdős sieves. The main motivation for the study of these is to better understand sets of algebraic integers obtained by removing congruence classes modulo an infinite family of ideals $\cb$ from a certain ambient space. Such sets have been studied over $\z$ for the past century, mainly in the case where all congruence classes are $0$. The canonical example of such sets would be the squarefree integers. To obtain these, one takes the set $\cb = \{p^2\z: p \text{ prime}\}$, and then takes the integers that are not congruent to $0$ modulo $p^2$ for any prime $p$. But many sets can be obtained in such a manner. For example, the set of primes corresponds to positive integers that are not congruent to $0$ modulo $pq$ for any primes $p$ and $q$. One of the most basic questions about such sets is related to their density. In \cite{Besicovitch}, Besicovitch showed that there are sets $\cb$ of ideals of $\z$ such that the set of integers not in $b\z$ for every $b\z \in \cb$ does not have a well defined density. Yet, Erdős and Davenport showed in \cite{Davenport} that such sets always have a logarithmic density. 
	
	Going beyond $\z$, sets of this form have also been studied over $\z^n$ and over the ring of integers $\co_K$ of some number field $K$. For example, the set of $l-$free numbers in a number field, can be obtained by removing from $\co_K$ all integers which are not congruent to $0$ modulo $\gb$ for any ideal $\gb$ contained in the set $\cb = \{\gp^l: \gp \text{ prime of }\co_K\}$. Also the set of \index{Visible Lattice Points}\textit{visible lattice points}, which is the set of pairs $(x,y) \in \z^2$ where $x$ and $y$ are coprime, can be obtained from $\z^2$ by removing every element of the ideals in $\cb = \{p\z \times p\z: p \text{ prime}\}$. In order to be able to deal with all such examples, we will work with sets $\cb$ of ideals in the ring of integers $\co_K$, where $K$ is taken to be an étale $\q-$algebra (i.e. the product of finitely many number fields, see \Cref{def: Etale Algebra}).
	
	We are interested in what happens when one chooses a set of  ideals $\cb$, and removes congruence classes modulo $\gb \in \cb$ distinct from only $0$.  We formalize this with the notion of \textit{sieves}. By a sieve over an étale $\q-$algebra $K$ we mean a pair $(\cb_R,(R_\gb)_{\gb \in \cb_R})$, where $\cb_R$ are a set of pairwise coprime ideals in $\co_K$, and each $R_\gb$ is a set of congruence classes mod $\gb$. In this work we will only deal with the case where $\cb_R$ is a set of pairwise coprime ideals.
	
	Having defined a sieve $R$, we want to study the set of $R-$free numbers $$\fr := \co_K \setminus\left(\bigcup_{\gb \in \cb_R} R_\gb\right).$$ When a sieve is of the form $R_\gb = \gb$ for all $\gb \in \cb_R$, we will refer to it as a $\cb-$free system. We have just described how in the case of $\cb-$free systems (especially over $\q$), quite a lot has been done in regards to studying the density of $\fr$. In contrast, other types of sets that can be realized as the $R-$free numbers for some sieve $R$ have been far less studied.
	
	Yet, from a number theoretic point of view, there are many interesting sets that are of this type. An example, which we investigate in part II, would be the set of squarefree values of polynomials. Let $f\in \z[X]$ be an irreducible polynomial. Studying the set of values $x$ for which $f(x)$ is squarefree is a problem that has been investigated by number theorists for almost one century (see for example \cite{Brwoning},\cite{estermann},\cite{Filaseta},\cite{granville} or \cite{Hooley}). Notice how, if we let $R^f$ be the sieve with congruence classes modulo $p^2$ given by the set
	\begin{equation}
		\label{eq: Rfp intro}
		R^f_p = \{x\in \z/p^2\z: f(x) \equiv 0 \mod p^2\} + p^2\z,
	\end{equation} we have that $\frf$ is exactly the set of values $x$ for which $f(x)$ is squarefree.
	
	If $R$ is a $\cb-$free system with $\cb_R =\cb$, we will write $\cf_{\cb}$ for $\fr$. In the last twenty five years, there has been extensive study of shift spaces related to these sets (see \cite{Baake} or \cite{Pleasants} for some early examples). There are two related shift spaces that one tends to study. In the case of $\cb-$free systems they usually agree, but this does not hold for general sieves. Assuming that $\cb$ is a set of ideals of $\co_K$, both spaces live inside $\{0,1\}^{\co_k}$ (which we identify with the power set of $\co_K$) where there is a shift action $S_a(A) = -a+A$ for any $a \in \co_K$ and set $A\subset \co_K$. The first space, which we denote by $X_\cb$, is the closure of the orbit of $\cf_\cb$ under $S$, inside of $\{0,1\}^{\co_k}$. The second is the collection of all $\cb-$admissible sets, which are sets $A\subset \co_K$ such that $A+\gb \neq \co_K$ for all $\gb \in \cb$. We will denote this set by $\Omega_{\cb}$.
	
	In the seminal paper \cite{Sarnak}, Sarnak conjectured\index{Sarnak's ! Conjecture} that, denoting by $\mu$ the Möbius function, we have that if $(X,T)$ is a topological dynamical system with entropy $0$ we have, for all $x \in X$ and $f\in C(X)$, that $$\sum_{n \leq N} \mu(n)f(T^nx) = o(N).$$ Note that this generalizes many famous results in number theory, for example in the case where $X$ is a single point, this is just the Prime Number Theorem (see \Cref{thm: Prime Ideal Theorem}). In order to tackle this problem, Sarnak started by considering the shift spaces associated to the squarefree numbers (note that $\mu^2$ is the characteristic function of the squarefree numbers). Letting $\cb = \{p^2\z: p \text{ prime}\}$, Sarnak announced the following results.
	
	\newpage

	\begin{enumerate}
		\item There is a measure $\nu_\cb$ (the Mirsky measure) for which $\cf_\cb$ is a generic point (see \Cref{def: generic point}) of the system $(X_\cb,S,\nu_\cb)$.
		\item The topological entropy of $(X_\cb,S)$ is $\frac{6}{\pi^2}$.
		\item $X_\cb = \Omega_{\cb}$.
		\item The system $(X_\cb,S)$ is proximal (see \Cref{eq: def of proximality}) and its unique minimal system is $\{\emptyset\}$.
		\item There is a nontrivial joining  of the systems $(X_\cb,S)$ and $(G,T)$ (see \Cref{def: Joining}), where $G$ is the group $\prod_p \z/p^2\z$ and $T$ is the rotation on $G$ defined by $T(g)_p := g_p +1$.
	\end{enumerate}
	
	These five points together form what is referred to in the literature as \textit{Sarnak's program}. Sarnak didn't publish detailed proofs of these results, but his notes inspired many authors that further generalized these results to many different contexts. We now give a short overview of some of these developments.
	
	In \cite{celarosi2}, Cellarosi and Sinai investigated the squarefree flow $(X_{\cb},S,\nu_\cb)$, and showed that it is isomorphic to $(G,T,\mmp)$, where $\mmp$ denotes the Haar measure of $G$. Peckner showed in \cite{Peckner} that the system $(X_{\cb},S)$ has a unique measure of maximum entropy. In \cite{Abdalaoui}, El Abdalaoui, Lemańczyk and De La Rue generalized points $(1)$, $(2)$ and $(3)$ of Sarnak's program for Erdős $\cb-$free systems over $\q$. These are $\cb-$free systems where the elements of $\cb$ are pairwise coprime and $\sum_{b\z\in \cb}1/b$ is finite. They also showed that there is an isomorphism between the system $(X_\cb,S,\nu_\cb)$ and a rotation of the group $\prod_{b \in \cb} \z/b\z$. The paper \cite{Dymek} of Dymek,  Kasjan,  Kułaga-Przymus, and Lemańczyk provides an extensive treatment of all $\cb-$free systems over $\q$ (including the case where the elements of $\cb$ are not pairwise coprime).

	These results were then generalized for $\cb-$free systems over different étale $\q-$algebras $K$. In the case where $K = \q^n$, Pleasants and Huck in \cite{Pleasants} proved points $(1)$, $(2)$ and $(3)$ of Sarnak's program for $k-$free lattice points. This corresponds to the case where $\cb = \{p^k\z \times \dots \times p^k\z: p \text{ prime}\}$. Additionally, they showed that the system $(X_\cb,S,\nu_\cb)$ is isomorphic to a rotation of the group $G= \prod_p \z^n/(p^k\z \times \dots \times p^k\z)$. In the case of number fields, Cellarosi and Vinogradov proved a generalization of point (1) when $K$ is a number field with ring of integers $\co_K$ and $\cb = \{\gp^k: \gp \text{ prime ideal of } \co_K\}$ and showed that the system $(\Omega_\cb,S,\nu_\cb)$ is isomorphic to a rotation of a compact group. Finally, in \cite{Bartnicka}, the authors generalized all of Sarnak's program to the case where $K$ is a number field, and $\cb$ is a set of ideals of $\co_K$ that are pairwise coprime and such that $\sum_{\gb \in \cb}N(\gb)^{-1}$ is finite (where $N(\gb) = |\co_K/\gb|$ is the norm of $\gb$).

	The notion of a sieve in this context was introduced by Gundlach and Klüners in \cite{Fabian}, while generalizing the results of \cite{Baake 4}. For a sieve $R$, they define the space $X_R$, corresponding to the closure of the orbit of $\fr$ under $S$ inside of $\{0,1\}^{\co_K}$. They also defined the space $\Omega_{R}$ of admissible sets, which are those $A \subset \co_K$ such that $-A+ R_\gb \neq \co_K$ for all $\gb \in \cb_R$.  Their work focused mostly on studying the symmetry group of the system $(\Omega_{R},S)$, and dealt with no measure theoretical dynamics. Since sieves generalize $\cb-$free systems, the first objective of this thesis was to prove a version of Sarnak's program for general sieves. In attempting to do this, we had to define a number of properties for sieves. The most important of these, were the notions of an Erdős sieve, as well as of a sieve with   strong and weak light tails. 
	
	We say a sieve is \textit{Erdős}, if $$\sum_{\gb \in \cb_R} \frac{|R_\gb|}{N(\gb)} < \infty.$$ For any sieve, we are free to order $\cb_R = \{\gb_1,\gb_2, \dots\}$, in which case we write $R_i := R_{\gb_i}$. Given a Følner sequence $I_N$ (see \Cref{def: Følner sequence}), we say that an Erdős sieve $R$ has \textit{weak light tails} for $I_N$ if $$\lim_{L \rightarrow \infty} \overline{d}_I\left(\bigcup_{i>L}R_i \setminus \bigcup_{j \leq L} R_j\right) = 0,$$ where $\overline{d}_I$ is the upper density with respect to $I_N$ (see \Cref{def: density}). We also say that $R$ has \textit{strong light tails} if $$\lim_{L \rightarrow \infty} \overline{d}_I\left(\bigcup_{i>L}R_i\right) = 0.$$ Strong light tails implies weak light tails, but not the other way around. In \Cref{ex:Sieve with weak but not strong light tails} we provide a sieve that has weak light tails for $I_N = [0,N]$, but does not have strong light tails. In general, sieves with weak light tails have well behaved dynamical systems, but the property may not be stable (for example, removing just one congruence class from an Erdős sieve $R$ with weak light tails for some $I_N$ can produce a sieve without weak light tails for any Følner sequence).

	We now present the results obtained. To our knowledge, no work had been done on the systems $(X_R,S)$ and $(\Omega_{R},S)$ previous to  \cite{Fabian}, except for showing that when $R$ is a sieve over $\q$, the system $(\Omega_R,S)$ has a unique measure of maximum entropy for certain $R$, see Theorem 2.2.25 in \cite{Kulaga} (a more general result was also obtained in \cite{Dymek}).
	
		Between this paper and part II, we will provide a full generalization of Sarnak's Program for Erdős Sieves, together with some number theoretic applications coming from these results (which we present in part II). The objective of this paper is to present the basic tools with which we will work, as well as generalizing all points of Sarnak's program except for (3). In part II we will provide a generalization of point (3), show that the system $(\Omega_{R},S,\nu_R)$ is isomorphic to a minimal rotation of a compact group, and provide some applications of these results.
	
	This paper is divided as follows. In section 2 we present some basic results from both algebraic number theory and the theory of dynamical system that we will use. In section 3 we define sieves and some of their properties, together with the corresponding dynamical systems that we will be studying. The major result of this section  (see \Cref{thm:Fr is generic}) is the following generalization of point $(1)$ of Sarnak's program, which connects the system $(\Omega_R,S,\nu_R)$, where $\nu_R$ is the Mirsky measure in $\Omega_R$ (see \Cref{def: Mirsky measure}), the weak light tails property and the set $\fr$.
	
	\begin{theorem}
		\label{thm: intro fr generic}
		Let $R$ be an Erdős sieve. For a given Følner sequence $I_N$, the following are equivalent.
		
		\begin{enumerate}
			\item $\fr$ is a generic point of $(\Omega_{R},S,\nu_R)$ with respect to $I_N$.
			\item $R$ has weak light tails with respect to $I_N$.
			\item The set $\fr$ has a density with respect to $I_N$ given by $$d_I(\fr) =  \prod_{\gb \in \cbr}  \left(1-\frac{|R_\gb|}{N(\gb)}\right). $$
		\end{enumerate}

	\end{theorem}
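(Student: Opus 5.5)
The plan is to prove $(1)\Rightarrow(3)\Rightarrow(2)\Rightarrow(1)$, using throughout the truncated sieves $R^{(L)}$ consisting of the first $L$ ideals $\gb_1,\dots,\gb_L$ with congruence classes $R_1,\dots,R_L$. The set $\mathcal{F}_{R^{(L)}} = \co_K\setminus\bigcup_{j\le L}R_j$ is periodic modulo $\gb_1\cdots\gb_L$. Since the $\gb_i$ are pairwise coprime, the Chinese Remainder Theorem gives that its density exists for every Følner sequence and equals $\prod_{j\le L}(1-|R_j|/N(\gb_j))$. More generally, every pattern $\{a_1,\dots,a_k\}\subset \fr^{(L)}$, $\{b_1,\dots,b_m\}\cap \fr^{(L)}=\emptyset$, occurs with density equal to its Mirsky $\nu_{R^{(L)}}$-probability. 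By the Erdős condition these products converge, as $L\to\infty$, to $\prod_{\gb\in\cbr}(1-|R_\gb|/N(\gb))$, which is $\nu_R$ of the cylinder $\{A: 0\in A\}$. I will use the definition of $\nu_R$ as the pushforward of Haar measure on $\prod_\gb \co_K/\gb$ under $g\mapsto \{a: a+g_\gb\notin R_\gb\ \forall\gb\}$. Then cylinder probabilities for $R$ are limits of those for $R^{(L)}$ by monotone convergence, and the error is bounded by $\sum_{i>L}|R_i|k/N(\gb_i)\to 0$ for a cylinder involving $k$ points.

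\textbf{(1)$\Rightarrow$(3).} Genericity applied to the continuous indicator of the cylinder $\{A:0\in A\}$ gives $d_I(\fr)=\nu_R(0\in A)$, which is the product above.

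\textbf{(3)$\Rightarrow$(2).} Observe that $\fr^{(L)}\setminus\fr = \bigcup_{i>L}R_i\setminus\bigcup_{j\le L}R_j$. Its upper density is at most $d_I(\fr^{(L)}) - \underline{d}_I(\fr)$, which by (3) equals $\prod_{j\le L}(\cdots)-\prod_{\text{all}}(\cdots)\to 0$.

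\textbf{(2)$\Rightarrow$(1).} This is the main step. It suffices to check genericity on cylinder functions $\one_{[a_1,\dots,a_k\in A,\ b_1,\dots,b_m\notin A]}$, since these span a dense subalgebra of $C(\{0,1\}^{\co_K})$; cylinders with $b$'s are handled by inclusion–exclusion over sets of $a$'s. So it is enough to show that for every finite $F\subset\co_K$ the set $\{x: x+F\subset\fr\}$ has density $\nu_R(F\subset A)$. Now $\{x: x+F\subset \fr^{(L)}\}$ has density $\nu_{R^{(L)}}(F\subset A)$, and it differs from $\{x:x+F\subset\fr\}$ by a set contained in $\bigcup_{f\in F}\bigl(-f+(\fr^{(L)}\setminus\fr)\bigr)$. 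Upper density is shift invariant along a Følner sequence, so this set has upper density at most $|F|\,\overline d_I(\bigcup_{i>L}R_i\setminus\bigcup_{j\le L}R_j)$, which tends to $0$ by weak light tails. Hence the upper and lower densities of $\{x:x+F\subset\fr\}$ both lie within $o_L(1)$ of $\nu_{R^{(L)}}(F\subset A)$, which itself tends to $\nu_R(F\subset A)$.

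The main obstacle I expect is this last step: making sure that the Mirsky measure's cylinder probabilities are exactly the limits of the truncated ones, so that the $L\to\infty$ limits of the two approximations match. It also requires checking that $\fr$ lies in $\Omega_R$ and that $\nu_R$ is supported there. I would first verify that approximation via the Haar-measure description of $\nu_R$, and only then run the density comparison.
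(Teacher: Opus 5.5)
Your proposal is correct and follows essentially the same route as the paper. Your periodic sets $\mathcal{F}_{R^{(L)}}$ play the role of the clopen sets $C_L$ in the odometer $G_R$, the tail set $\bigcup_{i>L}R_i\setminus\bigcup_{j\le L}R_j=\mathcal{F}_{R^{(L)}}\setminus\mathcal{F}_R$ is exactly the error term the paper controls, and your (2)$\Rightarrow$(1) step (cylinder functions, inclusion--exclusion, and $|F|$ translates of the tail set handled by the Følner property) matches the paper's argument; the differences are cosmetic, namely computing the truncated densities directly via periodicity and the Chinese Remainder Theorem instead of via unique ergodicity of the rotation, and closing the cycle with a direct (3)$\Rightarrow$(2), which is the same computation the paper carries out at the end of its density lemma.
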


 In section $4$ we generalize points $(2),(4)$ and $(5)$ of Sarnak's program for sieves. Finally in section $5$ we investigate the weak and strong light tails properties. The main result of this section is \Cref{thm: K etale algebra light tails for B_n finite} where we show that any Erdős $\cb-$free system over an étale $\q-$algebra has strong light tails for the Følner sequence $B_N$ (as defined in \Cref{eq: definition of BN}). Additionally, we investigate under which conditions does a sieve with weak light tails also have strong light tails.\\

	\textbf{Acknowledgments}\\
	
	The author would like to thank Jürgen Klüners, Joanna Kułaga-Przymus, Fabian Gundlach, Aurelia Dymek and Michael Baake for many helpful discussions and comments that greatly contributed for this work. This research was supported by the Deutsche Forschungsgemeinschaft (DFG, German Research Foundation) - Project-ID 491392403 - TRR 358 (project A2).

	\section{Preliminaries}
	
	Throughout, the natural numbers $\mathbb{N}$ will mean the integers greater than or equal to $1$. For the union of this set with $\{0\}$, we will write $\mathbb{N}_0$. The following is a well know result in analysis, that due to how often we will use throughout, we write down as a lemma (see Exercise 1.32 in \cite{Folland}).
	
	\begin{lemma}
		\label{lm: finite sum finite products}
		If $0 < a_i < 1$ are real numbers, then $$\sum_{i \geq 1} a_i < \infty \Leftrightarrow \prod_{i \geq 1} (1-a_i) > 0. $$
		
	\end{lemma}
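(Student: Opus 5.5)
The plan is to reduce everything to the inequality $\log(1-x) \le -x$ together with the elementary behaviour of partial sums and partial products. Since $0<a_i<1$, every factor $1-a_i$ lies in $(0,1)$. Hence the partial products $P_n=\prod_{i\le n}(1-a_i)$ are positive and decreasing, so $\prod_{i\ge1}(1-a_i)=\lim_n P_n$ exists in $[0,1)$. Likewise the partial sums $S_n=\sum_{i\le n}a_i$ increase, so the sum converges to a finite value or diverges to $+\infty$. Both sides of the equivalence are therefore statements about monotone limits, and I will prove each implication by comparing $P_n$ with $S_n$.

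For ($\Leftarrow$), I would use $1-x\le e^{-x}$ for all real $x$. This gives $P_n\le e^{-S_n}$. If $\prod(1-a_i)=c>0$, then $P_n\ge c$ for all $n$, because $P_n$ decreases to $c$. So $S_n\le -\log c$ for all $n$, and the sum is finite. Equivalently, if the sum diverges, then $P_n\to 0$.

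For ($\Rightarrow$), I need a lower bound of the form $1-x\ge e^{-Cx}$. Such a bound holds only when $x$ stays away from $1$, and this is the one point that needs care. Since $\sum a_i<\infty$, we have $a_i\to0$, so there is $M$ with $a_i\le 1/2$ for all $i> M$. On $[0,1/2]$ the function $\log(1-x)$ is concave and takes the values $0$ and $-\log 2$ at the endpoints. It therefore lies above its chord, which gives $\log(1-x)\ge -2\log 2\cdot x\ge -2x$. For $n> M$ this yields
$$P_n=\prod_{i\le M}(1-a_i)\cdot\prod_{M<i\le n}(1-a_i)\ \ge\ \prod_{i\le M}(1-a_i)\cdot \exp\Bigl(-2\sum_{i>M}a_i\Bigr).$$
The right-hand side is a fixed positive number: the first factor is a finite product of positive numbers, and the second is positive because the tail sum is finite. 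Hence $\lim_n P_n>0$.

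The only real obstacle is the tail step in ($\Rightarrow$). Finitely many $a_i$ may be close to $1$, so they have to be split off before applying the linear lower bound on $\log(1-x)$. Their contribution is harmless because each individual factor is strictly positive, and this is exactly where the hypothesis $a_i<1$ is used. The hypothesis $a_i>0$ is used only to make $S_n$ monotone, so that the sum is either finite or equal to $+\infty$.
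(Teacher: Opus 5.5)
Your proof is correct. The bound $1-x\le e^{-x}$ gives ($\Leftarrow$). For ($\Rightarrow$), you split off the finitely many $a_i>1/2$ and apply the chord bound $\log(1-x)\ge -2x$ on $[0,1/2]$, which is sound.

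The paper gives no argument of its own; it only cites Exercise 1.32 in Folland. The intended solution there is this same comparison of $\sum_i \log(1-a_i)$ with $\sum_i a_i$, so your approach is the standard one.
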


	\subsection{Lattices in étale $\q-$algebras}  By a \index{Number Field}number field $K$ we mean a finite field extension of the rationals $\q$, such as the quadratic number fields $\q[\sqrt{d}]$, or the cyclotomic fields $\q[e^{2\pi i/n}]$. We say $K$ has degree $n$ if  looking at it as a vector space over $\q$, it has dimension $n$. For example, if $d$ is not a square, then $\q[\sqrt{d}]$ has degree 2, and $\q[e^{2\pi i/n}]$ always has degree $\varphi(n)$, where $\varphi$ is the Euler totient function. It is well known (see for example Theorem 51 in the Appendix B of \cite{Marcus}) that for every $K$ there is some $\alpha \in K$ such that $K = \q[\alpha]$.

	We denote by $\co_K$ the \index{Ring of Integers}ring of integers of $K$, that is, those $x \in K$ that are roots of a monic polynomial with coefficients in $\z$. As a ring, $\co_K$ is always a Dedekind domain, meaning that any non-zero ideal can be uniquely factorized as a product of non-zero prime ideals, which correspond to the ideals not contained in any other proper ideal. As an additive group, $\co_K$ is always isomorphic to $\z^n$ (see Theorem 2 in Section 2 of \cite{Marcus}), so we can think of it as a \index{Lattice}lattice. By a lattice in $\co_K$, we mean a free $\z-$module of rank $n$ in $\co_K$. Each non-zero ideal $\gb$ of $\co_K$ is a lattice, and we denote by the \index{Norm of an Ideal}\textit{norm} of an ideal the quantity $$N(\gb) := [\co_K:\gb] = |\co_K/\gb|,$$ which is always finite. For any two ideals $\ga$ and $\gb$, we have $N(\ga\gb) = N(\ga)N(\gb)$. 
	
	The same way we have the Prime Number Theorem over $\q$, we have the \index{Prime Number Theorem ! for Number Fields}Prime Ideal Theorem over any number field, as proven by Landau in the second part of \cite{Landau}.
	
	\begin{theorem}
		\label{thm: Prime Ideal Theorem}
		Let $K$ be a number field with ring of integers $\co_K$. Denoting by $\pi_K(X)$ the number of prime ideals $\gp$ of $\co_K$ such that $N(\gp) \leq X$, we have $$\lim_{X \rightarrow \infty} \frac{\pi_K(X)}{X/\log(X)} = 1.$$
	\end{theorem}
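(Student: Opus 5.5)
The plan is to follow the classical analytic route via the Dedekind zeta function $\zeta_K(s)=\sum_{\ga}N(\ga)^{-s}=\prod_{\gp}(1-N(\gp)^{-s})^{-1}$, where the sum runs over nonzero ideals of $\co_K$ and the product over prime ideals. Both converge absolutely for $\Re s>1$, and unique factorization of ideals in the Dedekind domain $\co_K$, recalled above, gives the Euler product. The target is the Chebyshev-type statement $\psi_K(X):=\sum_{N(\gp^k)\le X}\log N(\gp)\sim X$. From it the theorem follows by partial summation, much as over $\q$.

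The first step is analytic continuation. Using the lattice structure of $\co_K\cong\z^n$, I would count ideals with $N(\ga)\le X$ in each ideal class by counting lattice points of a fixed ideal in a region of $K\otimes\mathbb{R}$ cut out by the norm, modulo units. This is Weber's theorem, $\#\{\ga: N(\ga)\le X\}=\kappa X+O(X^{1-1/n})$ with $\kappa>0$ (the residue from the class number formula). Abel summation then shows that $\zeta_K(s)-\kappa/(s-1)$ extends holomorphically to $\Re s>1-1/n$. So $\zeta_K$ is meromorphic there with a single simple pole at $s=1$.

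The second step, which I expect to be the main obstacle, is showing $\zeta_K(1+it)\neq0$ for real $t\neq0$. I would use the Mertens trick. For $\sigma>1$,
\begin{equation*}
\log|\zeta_K(\sigma)^3\zeta_K(\sigma+it)^4\zeta_K(\sigma+2it)|=\sum_{\gp,k}\frac{3+4\cos(kt\log N(\gp))+\cos(2kt\log N(\gp))}{k\,N(\gp)^{k\sigma}}\ge0 ,
\end{equation*}
since $3+4\cos\theta+\cos2\theta=2(1+\cos\theta)^2\ge0$. Suppose $\zeta_K$ had a zero at $1+it$. As $\sigma\to1^+$ the triple pole of $\zeta_K(\sigma)^3$ would be beaten by the zero of order at least $4$ from $\zeta_K(\sigma+it)^4$. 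The factor $\zeta_K(\sigma+2it)$ stays bounded because it is holomorphic near $1+2it$. The product would then tend to $0$, contradicting the inequality. Consequently $-\zeta_K'/\zeta_K(s)-1/(s-1)$ extends continuously to the closed half-plane $\Re s\ge1$.

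The final step is the tauberian one. Note that $-\zeta_K'(s)/\zeta_K(s)=\int_1^\infty X^{-s}\,d\psi_K(X)$, and that $\psi_K$ is nondecreasing. The Wiener--Ikehara theorem therefore gives $\psi_K(X)\sim X$ directly. I would then pass to $\theta_K(X):=\sum_{N(\gp)\le X}\log N(\gp)$. Above each rational prime $p$ there are at most $n$ primes $\gp$, so the terms with $k\ge2$ contribute $O(n\sqrt X\log^2 X)$, and hence $\theta_K(X)\sim X$. Partial summation, $\pi_K(X)=\theta_K(X)/\log X+\int_2^X\theta_K(t)/(t\log^2t)\,dt$, then yields $\pi_K(X)\sim X/\log X$. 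The integral term is $O(X/\log^2X)$.
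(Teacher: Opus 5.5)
The paper gives no proof of this statement: it quotes it from Landau and uses it as a black box. Your proposal is a correct version of the standard modern proof, complete modulo the two results you invoke, namely Weber's count $\kappa X+O(X^{1-1/n})$ of ideals of norm at most $X$ and the Wiener--Ikehara theorem. The following steps are all sound:
\begin{itemize}
\item the Abel-summation continuation of $\zeta_K(s)-\kappa/(s-1)$ to $\Re s>1-1/n$;
\item the $3$--$4$--$1$ inequality, where you rightly use that $1+2it$ is a point of holomorphy because $t\neq 0$;
\item the removal of prime powers using that at most $n$ primes lie over each rational $p$. Your bound there is generous; $O(n\sqrt{X}\log X)$ already holds.
\item the final partial summation.
\end{itemize}

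Compared with Landau's treatment, the analytic input is the same. In both, $\zeta_K$ is continued only a little to the left of $\Re s=1$, via lattice-point counting rather than a functional equation (Hecke's came later). The difference is how the asymptotic is extracted. The classical route uses contour integration, which needs a zero-free region and growth bounds for $\zeta_K'/\zeta_K$ near $\Re s=1$, and in exchange yields an error term. Your tauberian step needs only non-vanishing on the line $\Re s=1$. The price is that you get only $\pi_K(X)\sim X/\log X$, which is exactly what is stated.

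It is also worth noting that the paper only ever uses the upper bound $\pi_K(X)\ll_K X/\log X$, in \Cref{lm: light tails for B_n finite} and \Cref{thm: K etale algebra light tails for B_n finite}. That bound needs no analysis of $\zeta_K$ at all. Every prime $\gp$ lies over a unique rational prime $p\le N(\gp)$, and at most $n$ primes lie over each $p$. Hence $\pi_K(X)\le n\,\pi(X)$, and Chebyshev's bound finishes.
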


	Whenever $A$ and $B$ are two subsets of an abelian semigroup (like $\mathbb{N}$ or $\co_K$ for some number field $K$), we will write $$A+B := \{a+b: a\in A, b\in B\},$$ with $A + \emptyset = \emptyset$.  Given two ideals $\ga,\gb \subset \co_K$, we say they are coprime if $\ga + \gb = \co_K$. In this case, we will write $(\ga,\gb) = 1$. For ideals we have that $\ga \mid \gb$ is equivalent to $\gb \subset \ga$, so we have that the greatest common divisor of $\ga$ and $\gb$, which we write as $(\ga,\gb)$, equals $ \ga+\gb$. Given a set $\gb_1,\dots, \gb_k$ of ideals, we will denote by $\text{lcm}(\{\gb_1,\dots,\gb_k\})$ the greatest ideal that is contained in all of $\gb_1,\dots,\gb_k$, that is $$\text{lcm}(\{\gb_1,\dots,\gb_k\}) = \gb_1\cap \dots \cap \gb_k. $$ If $\ga$ and $\gb$ are coprime, this corresponds to $\ga \cap \gb = \ga\gb$. 
	
	For a number field $K$ of degree $n$, there are $n$ distinct embeddings of $K$ into $\mathbb{C}$. Writing $K = \q[\alpha]$, and letting $f$ be the minimal polynomial of $\alpha$, these correspond to the field homomorphisms determined by $\phi(\alpha) = \theta$, where $\theta$ is one of the $n$ distinct roots of $f$ in $\mathbb{C}$
	
	We then consider the Minkowski embedding\index{Minkowski Embedding} $\sigma: \co_K \rightarrow \mathbb{C}^n$ given by $$\sigma(x):= (\phi(x))_{\phi \in \text{Hom}_\q(K, \mathbb{C})}.$$
	We can define a (vector field) norm on $K$ by taking the norm inherited from the supremum norm in the Minkowski embedding, that is
	\begin{equation}
		\label{eq: minkowsky norm}
		\|x\| := \sup_{\phi \in \text{Hom}_\q(K,\mathbb{C})}|\phi(x)|.
	\end{equation} 
	We write for any $N \in \mathbb{R}_{\geq 0}$\index{Balls in Number Fields}
	\begin{equation}
		\label{eq: definition of BN}
		B_N := \{x \in \co_K: \|x\| \leq N\}.
	\end{equation} 
	\begin{example}
		If $ K = \q$ we clearly have $B_N = [-N,N]$. For another example, write $\zeta = e^{2\pi i /3}$ and say that $K = \q[\zeta]$. This is a cyclotomic field, so $\co_K = \z[\zeta]$. Taking some $x \in \co_K$, we have $x = a_1+a_2\zeta$ where $a_1,a_2\in\z$ and $$\sigma(x) = (a_1+a_2\zeta,a_1+a_2\zeta^2).$$ 
		From this, one easily computes that $$B_N = \{a_1+a_2\zeta\in \co_K:  [a_1^2+a_2^2-a_1a_2]^{1/2} \leq N\}.$$
	\end{example}

	Given some $x \in K$, we will also write $$B_N(x) := \{y \in \co_K: \|x-y\| \leq N\}.$$ 
	
	For any lattice $\Gamma \subset \co_K$, we write $$\lambda_i(\Gamma) := \inf\{N \in \mathbb{R}_{\geq 0}: B_N \text{ contains } i \text{ linearly independent elements of } \Gamma\}.$$ 
	Ideals of a number ring don't behave exactly like arbitrary lattices. Given functions $f$ and $g$, we will write $f(x) \ll_a g(x)$ if there is some constant $C_a$ depending only on the object $a$ (for example, the field $K$ or its degree $n$), such that $f(x) \leq C_ag(x)$ for all $x$. We will use the notation $f(x) \asymp_a g(x)$ if $f(x) \ll_a g(x)$ and $g(x) \ll_a f(x)$. One of the properties that distinguishes ideals from general lattices is that we always have that $\lambda_1(\gb) \asymp_K \lambda_n(\gb)$, as the next lemma demonstrates. The collection of lattices $\{k\z \times \z: k \in \mathbb{N}\}$ inside of $\z^2$ shows that this is not the case for general lattices, since we have $\lambda_1(k\z \times \z) = 1$ and $\lambda_2(k\z \times \z) = k$.
	
	\begin{lemma}
		\label{lm: lambda_i asymp N(gb)(1/n)}
		Let $K$ be a number field of degree $n$. Then for any ideal $\gb$ of $\co_K$, we have $$\lambda_1(\gb) \asymp_K \lambda_n(\gb) \asymp_K N(\gb)^{\frac{1}{n}}.$$ 
	\end{lemma}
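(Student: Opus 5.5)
We prove the chain $\lambda_1(\gb) \ll_K N(\gb)^{1/n} \ll_K \lambda_1(\gb)$, together with the trivial bound $\lambda_1(\gb)\le \lambda_n(\gb)$ and an upper bound $\lambda_n(\gb)\ll_K N(\gb)^{1/n}$. The tools are Minkowski's second theorem for the lattice $\sigma(\gb)$ and the multiplicativity of the norm on elements.

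\emph{Lower bound on $\lambda_1$.} Take a nonzero $x \in \gb$ with $\|x\| = \lambda_1(\gb)$. Then $x\co_K \subset \gb$, so $\gb \mid x\co_K$, and hence $N(\gb) \le N(x\co_K) = |N_{K/\q}(x)| = \prod_{\phi}|\phi(x)| \le \|x\|^n$. This gives $N(\gb)^{1/n} \le \lambda_1(\gb)$, with constant $1$.

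\emph{Upper bound on $\lambda_n$.} Work with the image $\sigma(\co_K)$. It is a full-rank lattice in the real $n$-dimensional space $\sigma(K)\otimes\mathbb{R}\cong \mathbb{R}^{r_1}\times\mathbb{C}^{r_2}$. Its covolume is $c_K=2^{-r_2}|d_K|^{1/2}$, so $\sigma(\gb)$ has covolume $c_K N(\gb)$. The set $\{\|y\|\le 1\}$ is a symmetric convex body of some volume $v_K>0$. Minkowski's second theorem gives
\[
\lambda_1(\gb)\cdots\lambda_n(\gb)\, v_K \le 2^n c_K N(\gb).
\]
Combining this with $\lambda_i(\gb)\ge \lambda_1(\gb)\ge N(\gb)^{1/n}$ for $i<n$, we get
\[
\lambda_n(\gb)\, N(\gb)^{(n-1)/n} \le \lambda_1(\gb)\cdots\lambda_n(\gb) \ll_K N(\gb),
\]
so $\lambda_n(\gb)\ll_K N(\gb)^{1/n}$. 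Since $\lambda_1\le\lambda_n$, we now have
\[
N(\gb)^{1/n}\le \lambda_1(\gb)\le\lambda_n(\gb)\ll_K N(\gb)^{1/n},
\]
which proves both asymptotic equivalences.

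\emph{Main obstacle.} The only real work is checking that Minkowski's second theorem applies with respect to the norm $\|\cdot\|$. The point is that the definition of $\lambda_i$ via $B_N$ coincides with the successive minima of $\sigma(\gb)$ with respect to the convex body $\{\|y\|\le1\}$. Note that $\sigma$ embeds into $\mathbb{C}^n$ but lands in an $n$-dimensional real subspace, and that subspace is where the volumes must be computed.

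Both constants, $c_K$ (from the discriminant) and $v_K$, depend only on $K$. If one prefers to avoid Minkowski's second theorem, an alternative is to take $n$ independent elements of $\gb$ of the form $m\omega_j$, but that approach only works for special $\gb$. So the second-theorem route is the one I would commit to.
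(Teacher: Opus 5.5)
Your proof is correct, but it is not the paper's route. The paper simply cites Corollary 4 of Fraczyk, which gives $d_K^{\max(0,\frac1n-\frac1{2m})}N(\gb)^{1/n}\ll_n\lambda_m(\gb)\ll_n d_K^{\min(\frac{1}{2n-2m+2},\frac1n)}N(\gb)^{1/n}$ for every $m$, and then absorbs the discriminant into the implied constant.

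You instead argue directly, in two steps:
\begin{itemize}
\item The divisibility $N(\gb)\mid N(x\co_K)=|N_{K/\q}(x)|\le\|x\|^n$ for nonzero $x\in\gb$ gives $\lambda_1(\gb)\ge N(\gb)^{1/n}$ with constant $1$. This step uses that $\co_K$ is a domain, which is exactly what fails for the \'etale algebras treated later in the paper.
\item Minkowski's second theorem, which the paper itself invokes right after this lemma, then bounds $\lambda_n(\gb)$ from above.
\end{itemize}

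Your upper-bound constant grows like $|d_K|^{1/2}$, whereas Fraczyk's grows like $d_K^{1/n}$. So the citation buys explicit and sharper uniformity in $K$, but that is irrelevant for an $\asymp_K$ statement. Your argument buys a self-contained proof that shows why ideals, unlike general lattices, have comparable successive minima.

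The alternative you discarded also works in general if you replace the integer $m$ by a shortest nonzero $x\in\gb$. Let $\omega_1,\dots,\omega_n$ be a $\z$-basis of $\co_K$. The elements $x\omega_1,\dots,x\omega_n$ lie in $\gb$, are linearly independent, and satisfy $\|x\omega_j\|\le\|x\|\,\|\omega_j\|$. Hence $\lambda_n(\gb)\le\max_j\|\omega_j\|\,\lambda_1(\gb)$, and Minkowski's first theorem alone then gives the upper bound.
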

	
	\begin{proof}
		This is Corollary 4 in \cite{Fraczyk}. They show that, writing $d_K$ for the discriminant of $K$, we always have $$d_K^{\max(0,\frac{1}{n}-\frac{1}{2m})}N(\gb)^{1/n} \ll_n \lambda_m(\gb) \ll_n d_K^{\min(\frac{1}{2n-2m+2},\frac{1}{n})}N(\gb)^{1/n}. $$ Since $d_K$ and $n$ only depend on $K$, the result follows.
	\end{proof}
	
	Notice that while \Cref{lm: lambda_i asymp N(gb)(1/n)} does not hold for lattices in general, Minkowski's second theorem (see Theorem 2E in \cite{Schmidt2}) shows that for any lattice $	\Gamma \subset \co_K$, \begin{equation}
		\label{eq: product of Lambdas}
		[\co_K:\Gamma] \asymp_K \lambda_1(\Gamma) \dots \lambda_n(\Gamma).
	\end{equation}

	\begin{definition}
		\label{def: Etale Algebra}
		An \index{\'Etale $\q-$Algebra}étale $\q-$algebra $K$ is a product of finitely many finite extensions of $\q$, that is, $K$ is an algebra that can be written as the cartesian product $K_1\times \dots \times K_m$ of number fields $K_i$ with coordinate-wise sum and product.
	\end{definition}
	
	Given an étale $\q-$algebra $K = K_1\times \dots \times K_m$, the ring of integers of $K$ is then $$\co_K = \co_{K_1} \times \dots \times \co_{K_m}.  $$ As in the number field case, every non-zero ideal of $\co_K$ can be uniquely factored into the product of prime ideals, with the prime ideals in $\co_K $ being of the form $P_1 \times \dots \times P_m$, where $P_i$ is a prime ideal of $\co_{K_i}$ for a unique $i$, and $P_j = \co_{K_j}$ for all remaining indexes. Hence, all ideals $I$ of $\co_K$ can be written in the form $I =I_1 \times \dots \times I_m$. We will say that $I$ is an \index{Invertible Ideal}\emph{invertible ideal}, if $I_i \neq (0)$ for every $1 \leq i \leq m$. This means that if $I$ is an invertible ideal, then $I$ is a lattice, and $[\co_K:I] < \infty.$
	
	Given ideals $I= I_1\times \dots \times I_m$ and $J= J_1\times \dots \times J_m$, we have $$I+J = (I_1+J_1) \times \dots \times (I_m+J_m) \text{ and } IJ = (I_1J_1) \times \dots \times (I_mJ_m). $$
	In particular, we say the ideals are coprime and write $(I,J) =1$, if $I+J = \co_K$, or equivalently, if $(I_i,J_i) = 1$ for every $i$. Similarly, we say that $I$ divides $J$  and write $I \mid J$ if $I_i \mid J_i$ for every $i$.  Notice that the Chinese Remainder Theorem also holds in this context, meaning that if $\gb_1,\dots, \gb_l$ are a sequence of pairwise coprime non-zero ideals of $\co_K$, then $$\co_K/\prod_{i=1}^l \gb_i \cong \prod_{i=1}^l \co_K/\gb_i,$$ with the isomorphism being given by the map that sends $x+\prod_{i=1}^l \gb_i $ into $(x+\gb_i)$. This is because if we write each $\gb_i$ as $\gb_i^{(1)}\times \dots \times\gb_i^{(j)}\times \dots \times\gb_i^{(m)}$, then we can break $\co_K/\prod_{i=1}^l \gb_i$ into the (Cartesian) product of $\co_{K_j}/\prod_{i=1}^l \gb_i^{(j)}$ over all $1 \leq j \leq m$. Applying the Chinese Remainder Theorem for each number field $K_j$, and then taking the Cartesian product of  $\prod_{i=1}^l\co_{K_j}/ \gb_i^{(j)}$ gives $ \prod_{i=1}^l \co_K/\gb_i$ 
	
	We will use the following lemma to count the points in a particular congruence class modulo a lattice. A proof is given in Proposition 3.2 of \cite{Bartnicka} for the case where $K$ is a number field and $\Gamma$ is an ideal of $\co_K$, but a proof of the more general case is exactly the same.
	
	\begin{lemma}
		\label{lm:Counting under restriction}
		Let $K$ be an étale $\q-$algebra of degree $n$ and $\Gamma$ a lattice of $\co_K$. For any $a \in \co_K$, define $$T_a(N) := |\{x\in B_N: x \equiv a \mod \Gamma \}|.$$ We have $$T_a(N) = \frac{|B_N|}{[\co_K:\Gamma]} + O_K\left(1+\sum_{i=1}^{n-1}\frac{N^i}{\lambda_1(\Gamma)\cdots\lambda_i(\Gamma)}\right).$$

	\end{lemma}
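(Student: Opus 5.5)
We prove \Cref{lm:Counting under restriction} by a geometry-of-numbers argument: tile $\co_K \otimes \mathbb{R} \cong \mathbb{R}^n$ by translates of a fundamental domain of $\Gamma$ and compare the count with a volume. Through the Minkowski embedding we identify $\co_K$ with a full lattice in $\mathbb{R}^n$, and $B_N$ with the lattice points of the convex body $\mathcal{K}_N=\{v:\|v\|\le N\}$. For an étale algebra $K=K_1\times\dots\times K_m$ the norm $\|\cdot\|$ is the maximum of the norms on the factors, so $\mathcal{K}_N = N\mathcal{K}_1$ is again a symmetric convex body with nonempty interior; this is the only place where the étale setting enters. Counting $x\equiv a \bmod \Gamma$ in $B_N$ is the same as counting points of the translated lattice $a+\Gamma$ in $N\mathcal{K}_1$. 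Applying the same estimate to $\co_K$ itself (whose successive minima are $\asymp_K 1$) gives $|B_N| = \vol(N\mathcal{K}_1)/\det(\co_K) + O_K(1+N^{n-1})$. It therefore suffices to prove
$$|(a+\Gamma)\cap N\mathcal{K}_1| = \frac{\vol(N\mathcal{K}_1)}{\det \Gamma} + O_K\Bigl(1+\sum_{i=1}^{n-1}\frac{N^i}{\lambda_1(\Gamma)\cdots\lambda_i(\Gamma)}\Bigr).$$
Since $\det\Gamma = [\co_K:\Gamma]\det\co_K$, the main terms then match. The error from $|B_N|$, divided by $[\co_K:\Gamma]$, is absorbed because $[\co_K:\Gamma]\asymp_K \lambda_1\cdots\lambda_n$ by \eqref{eq: product of Lambdas} and $\lambda_n(\Gamma)\gg_K 1$.

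The key step is the standard lattice point estimate with successive minima, due to Davenport and refined by Widmer. Choose a reduced basis $v_1,\dots,v_n$ of $\Gamma$ with $\|v_i\|\asymp_n \lambda_i(\Gamma)$; this is possible by Mahler's theorem, losing only constants depending on $n$. Use the parallelepiped $P=\{\sum t_i v_i : t_i\in[0,1)\}$ as fundamental domain. Each point of $a+\Gamma$ in $N\mathcal{K}_1$ corresponds to a translate $y+P$. Translates meeting the interior of $N\mathcal{K}_1$ without crossing its boundary are counted exactly by volume, so the error is bounded by the number of translates meeting $\partial(N\mathcal{K}_1)$. The plan is to bound this number by induction on dimension, slicing along the basis directions. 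Fix the coordinate along $v_n$: the slices are translates of the rank $n-1$ lattice spanned by $v_1,\dots,v_{n-1}$ intersected with convex sections of $N\mathcal{K}_1$, and there are $O(1+N/\lambda_n)$ of them. This produces the familiar terms $N^i/(\lambda_1\cdots\lambda_i)$ for $i<n$, plus the constant $1$. Equivalently, one can apply Davenport's lemma in the coordinates $t_i$: the error is bounded by the sum, over $i<n$, of the maximal $i$-dimensional projection volumes of the image of $N\mathcal{K}_1$, plus $1$. In those coordinates $N\mathcal{K}_1$ lies in a box with sides $\ll_K N/\lambda_j$, because the reducedness of the basis gives $|t_j|\ll_n \|v\|/\lambda_j$. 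The $i$-dimensional projections are therefore bounded by products of $i$ of these side lengths. The largest such product is $N^i/(\lambda_1\cdots\lambda_i)$, since the $\lambda_j$ are increasing.

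The main obstacle is exactly this justification that coordinates with respect to the reduced basis are bounded by $\|v\|/\lambda_j$, i.e. that the basis is almost orthogonal with constants depending only on $n$. I would take this from Davenport and Widmer (or from \cite{Schmidt2}) rather than reprove it. Since \cite{Bartnicka} carries this out for ideals in number fields and the argument uses only convexity and symmetry of $\mathcal{K}_1$ together with the lattice structure, it transfers verbatim to a lattice $\Gamma$ in an étale algebra. Translating by $a$ changes nothing, because Davenport's bound is uniform over translates of the body.
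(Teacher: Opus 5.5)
Your proposal is correct and follows essentially the paper's route: the paper gives no independent argument, only citing Proposition 3.2 of \cite{Bartnicka} for ideals in number fields and asserting that the general case is proved in exactly the same way, and your geometry-of-numbers count with successive minima is that argument. Your conversion of the volume main term into $|B_N|/[\co_K:\Gamma]$ via \eqref{eq: product of Lambdas} and $\lambda_n(\Gamma)\geq\lambda_1(\co_K)\gg_K 1$ is also fine. The step you flag as the main obstacle is routine. Write $v=\sum_j t_jv_j$; Cramer's rule and Hadamard's inequality give $|t_j|\ll_K \|v\|\prod_{i\neq j}\|v_i\|/\det\Gamma$. Combining this with $\|v_i\|\ll_n\lambda_i(\Gamma)$ and Minkowski's second theorem $\det\Gamma\asymp_K\lambda_1(\Gamma)\cdots\lambda_n(\Gamma)$ gives $|t_j|\ll_K\|v\|/\lambda_j(\Gamma)$.
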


	\begin{remark}
		\label{rmk: particular case for ideals}
		In particular, if $\gb$ is a non-zero ideal in $\co_K$, then we have $$T_a(N) = \frac{|B_N|}{N(\gb)} + O_K\left(1+\sum_{i=1}^{n-1}\frac{N^i}{\lambda_1(\gb)\cdots\lambda_i(\gb)}\right).$$ Using \Cref{lm: lambda_i asymp N(gb)(1/n)}, we know that for every $i$, $\lambda_i(\gb) \asymp_K N(\gb)^{1/n}$ and so we have $$T_a(N) = \frac{|B_N|}{N(\gb)} + O_K\left(1+\left(\frac{N}{N(\gb)^{1/n}}\right)^{n-1}\right).$$
	\end{remark}

	\subsection{Dynamical Systems}
	
	We now present a number of standard results from the theory of dynamical systems that we will need. Given a group $G$ and a topological space $X$\index{Action by Homeomorphisms}, we say that a map $T:G \times X \rightarrow X$ is an action of $G$ on $X$ by homemorphisms, if the maps $T_g(x) := T(g,x)$ are always homeomorphisms, and they satisfy
	\begin{enumerate}
		\item $T_{\textbf{0}}(x) = x$ for all $x\in X$, where $\textbf{0}$ is the identity of $G$,
		\item$T_{g_1}(T_{g_2}(x)) = T_{g_1g_2}(x)$ for any $g_1,g_2 \in G$.
	\end{enumerate}

	\begin{definition}
		A \index{Dynamical System!Topological}topological dynamical system is a pair $(X,T)$, where $X$ is a compact metric topological space, and $T$ is an action of a group $G$ on $X$ by homeomorphisms.
	\end{definition}
	
	In general, we will only work with groups $G$ that are isomorphic to $\z^n$, so in this section $G$ is always assumed to be finitely generated,  abelian, and locally compact. In this section, we will denote the metric of $X$ by $d$. 
	
	A morphism $\phi$ of topological dynamical systems between $(X,T)$ and $(Y,S)$, is a continuous map from $X$ to $Y$ such that $\phi(T_g(x)) = S_g(\phi(x))$ for every $x \in X$ and $g \in G$. If $\phi$ is surjective, then we say that $\phi$ is a \index{Factor System}\textit{factor} morphism, and that $(Y,S)$ is a factor of $(X,T)$. Given a subset $A\subset X$, we say that $A$ is $T-$invariant if $T_g(A)=A$ for every $g \in G$.
	
	\begin{definition}
		\label{def: Joining}
		Given two topological  dynamical systems $(X,T)$ and $(Y,S)$, we say that a set $J \subset X \times Y $ is a \index{Joining!Topological}\textit{joining}, if it is closed, invariant under $(T\times S)$, and has full projections on both coordinates. If $ J \subsetneq X \times Y$, we say that $J$ is a non trivial joining (see Section 3 in \cite{Glasner}).
	\end{definition}

	We say that a space $(X,T)$ is \index{Minimal System}\textit{minimal}, if $X$ does not have a proper closed $T-$invariant subset. Equivalently, $X$ is minimal if the orbit $\{T_g(x):g \in G\}$ of every $x\in X$ is dense. Using Zorn's Lemma, one can easily show that every space has a non-empty minimal subsystem (see Theorem 4 in the first chapter of \cite{Auslander}).
	
	We say a system is \index{Dynamical System!Distal} \textit{distal} if for any $x\neq y$, we have $$\liminf_{g \in G} d(T_g(x),T_g(y)) > 0.$$
	On the other hand, we say that a system is \index{Dynamical System!Proximal}\textit{proximal} if for any $x,y \in X$, we have 
	\begin{equation}
		\label{eq: def of proximality}
		\liminf_{g \in G} d(T_g(x),T_g(y)) = 0.
	\end{equation} We will need the following two results about proximality. A proof of \Cref{lm: proximal implies unique fixed point} can be found in page 175 of \cite{Kerr}. 
	
	\begin{lemma}
		\label{lm: proximal implies unique fixed point}
		If the system $(X,T)$ is proximal, then there exists a unique $x_0$ such that $T_g(x_0) = x_0$ for all $g \in G$.
	\end{lemma}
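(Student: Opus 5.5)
The plan is to prove existence of a common fixed point using the enveloping (Ellis) semigroup, one generator of $G$ at a time, and then get uniqueness directly from proximality. Let $E(X)$ be the closure of $\{T_g : g\in G\}$ in $X^X$ with the product topology. I will use three standard facts about it.
\begin{enumerate}
\item $E(X)$ is a compact semigroup in which right multiplication $p\mapsto pq$ is continuous.
\item Every nonempty closed left ideal of $E(X)$ contains a minimal left ideal, and every minimal left ideal contains an idempotent $v=v^2$ (Ellis--Numakura lemma).
\item Since $G$ is abelian, every $p\in E(X)$ commutes with every $T_g$. This follows by passing to limits in $T_hT_g=T_gT_h$, using that $T_g$ is continuous.
\end{enumerate}

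The first step is to restate proximality algebraically. By compactness, $\liminf_{g\in G} d(T_gx,T_gy)=0$ means there are $g_i$ with $d(T_{g_i}x,T_{g_i}y)\to 0$. Passing to a convergent subnet $T_{g_i}\to p$ in $E(X)$ gives $px=py$. So for every $x,y\in X$ the set
$$L_{x,y}:=\{q\in E(X): qx=qy\}$$
is nonempty. It is closed, because evaluation at a point is continuous on $X^X$. It is a left ideal, because $qx=qy$ implies $rqx=rqy$. By fact (2), $L_{x,y}$ contains a minimal left ideal and hence an idempotent $v$ with $vx=vy$.

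Next I construct a fixed point for a single element $g\in G$. Take any $z\in X$ and apply the above to the pair $(z,T_gz)$ to get $v$ with $vz=vT_gz$. By fact (3), $vT_gz=T_g vz$. Hence $w:=vz$ satisfies $T_gw=w$ (idempotency of $v$ is not even needed here). So $\mathrm{Fix}(g):=\{x\in X: T_gx=x\}$ is nonempty. It is closed, and it is $G$-invariant because $G$ is abelian: if $T_gx=x$ then $T_gT_hx=T_hT_gx=T_hx$. The restriction of the action to $\mathrm{Fix}(g)$ is again a proximal topological dynamical system, since proximality passes to closed invariant subsystems with the restricted metric.

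Now I induct over a finite generating set $g_1,\dots,g_k$ of $G$. Set $X_0=X$ and $X_j=\mathrm{Fix}(g_j)\cap X_{j-1}$. Applying the previous step to the proximal system $(X_{j-1},T)$ and the element $g_j$ shows that each $X_j$ is nonempty, closed, invariant and proximal. Any $x_0\in X_k$ is fixed by every generator, hence by all of $G$.

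Uniqueness is immediate. If $x_0$ and $y_0$ are both fixed by all of $G$, then $d(T_gx_0,T_gy_0)=d(x_0,y_0)$ for every $g$. Proximality then forces $d(x_0,y_0)=0$, so $x_0=y_0$.

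The only step needing care is the algebraic reformulation of proximality together with the commutation $pT_g=T_gp$. This is exactly where abelianness of $G$ enters, and where one must check that the restricted systems $X_j$ remain proximal so the induction can continue. The rest is bookkeeping. If one prefers to avoid the Ellis semigroup, an alternative is to run the same argument with nets directly: take $T_{g_i}z$ and $T_{g_i}T_gz$ getting close, then pass to a convergent subnet of $T_{g_i}z$.
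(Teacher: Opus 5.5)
Your proof is correct and complete. The paper gives no argument of its own for this lemma; it only refers to an external textbook. So the useful comparison is with the standard route, not with an in-paper proof.

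\textbf{Comparison with the standard route.} That route uses the same key step as yours: for abelian $G$, proximality of the pair $(z,T_gz)$ produces a point fixed by $T_g$. The difference is that it applies this step inside a minimal subset $M\subseteq X$. There $\mathrm{Fix}(g)\cap M$ is nonempty, closed and $G$-invariant, so minimality forces it to equal $M$ for every $g$. Hence $G$ acts trivially on $M$, and $M$ is a single point. This replaces your induction over generators by one appeal to Zorn's lemma, and it does not need $G$ to be finitely generated. Your version avoids minimal sets. Its finite-generation assumption is not essential either: the same induction shows that every finite intersection $\mathrm{Fix}(g_1)\cap\dots\cap\mathrm{Fix}(g_k)$ is nonempty, so compactness of $X$ gives a common fixed point for an arbitrary abelian $G$.

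\textbf{Two simplifications.} First, facts (1) and (2) (idempotents and minimal left ideals) are never used. You only need $L_{z,T_gz}\neq\emptyset$, which is compactness of $E(X)$, together with fact (3). Second, since $X$ is compact metric, you can drop $E(X)$ altogether. Choose $g_i$ with $d(T_{g_i}z,T_{g_i}T_gz)\to 0$ and pass to a subsequence with $T_{g_i}z\to w$. Then $T_{g_i}T_gz=T_gT_{g_i}z\to T_gw$, while the first condition forces the same sequence to converge to $w$, so $T_gw=w$.

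You are right that commutativity is the essential input. For non-abelian groups the lemma fails: $\mathrm{SL}_2(\mathbb{Z})$ acting on $\mathbb{P}^1(\mathbb{R})$ is proximal but has no fixed point.
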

	
	Given a set $S \subset G$, we say that it is \index{Syndetic Set}\textit{syndetic} if there is some compact set $K$ such that $S+K = G$. We have the following result (see \cite{Bartnicka}). 
	
	\begin{lemma}
		\label{lm: proximality auxil result}
		Let $(X,T)$ be a dynamical system for which there is some $x_0$ such that $T_g(x_0) = x_0$ for all $g\in G$. Then, the following are equivalent.
		
		\begin{enumerate}
			\item For all $x,y \in X$ and $\epsilon >0$, the set $\{g \in G: d(T_g(x),T_g(y)) < \epsilon\}$ is syndetic.
			\item For all $x \in X$ and $\epsilon >0$, the set $\{g \in G: d(T_g(x),x_0) < \epsilon\}$ is syndetic.
		\end{enumerate}

		If any of these conditions hold, then $(X,T)$ is proximal.
	\end{lemma}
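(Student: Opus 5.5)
We prove the two implications separately and then deduce proximality. The implication $(1)\Rightarrow(2)$ is immediate: apply (1) with $y = x_0$ and use $T_g(x_0) = x_0$ for every $g$, so that $d(T_g(x),T_g(x_0)) = d(T_g(x),x_0)$.

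For $(2)\Rightarrow(1)$ the difficulty is that the two syndetic sets $\{g: d(T_g(x),x_0)<\epsilon\}$ and $\{g: d(T_g(y),x_0)<\epsilon\}$ need not have syndetic intersection. So instead of intersecting them, we work in the product system $(X\times X, T\times T)$, which has the fixed point $(x_0,x_0)$.

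\textbf{Step 1 (unique minimal set).} Hypothesis (2) means that $x_0$ lies in the orbit closure of every $x\in X$. Hence any nonempty minimal subset of $X$ contains $x_0$. Since $\{x_0\}$ is itself closed and invariant, every minimal subset of $X$ equals $\{x_0\}$.

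Now let $M$ be a minimal subset of the orbit closure of $(x,y)$ in $X\times X$. Each coordinate projection of $M$ is a minimal subset of $X$, because it is a factor of a minimal system. Therefore both projections equal $\{x_0\}$, and so $M = \{(x_0,x_0)\}$.

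\textbf{Step 2 (syndetic returns).} Let $Y$ be the orbit closure of $z=(x,y)$, and let $U$ be the open $\epsilon$-neighbourhood of $(x_0,x_0)$ in the max-metric. We claim that $\{g: (T\times T)_g(z)\in U\}$ is syndetic; this gives (1).

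Suppose it is not. Take an exhausting sequence of finite sets $F_k\uparrow G$ (recall $G\cong\z^n$). Failure of syndeticity gives, for each $k$, an element $g_k$ with $(T\times T)_{g_k+h}(z)\notin U$ for all $h\in F_k$. Pass to a convergent subsequence $(T\times T)_{g_k}(z)\to w$ by compactness. Since each $T_h$ is continuous and $U^c$ is closed, we get $(T\times T)_h(w)\notin U$ for every $h\in G$.

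Thus the orbit closure of $w$ avoids $U$, since $U$ is open. That orbit closure contains a minimal set, and this minimal set lies in $Y$ and is different from $\{(x_0,x_0)\}$. This contradicts Step 1, because $w\in Y$ means the orbit closure of $w$ is contained in $Y$. This compactness step is the main obstacle; the points to check carefully are the choice of exhausting $F_k$ and the passage of the condition ``$\notin U$'' to the limit point $w$.

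\textbf{Step 3 (proximality).} Assume (1). A syndetic subset $S$ of $G\cong \z^n$ satisfies $S+K=G$ for a compact, hence finite, set $K$, so $S$ meets every translate $g+K$ and is therefore unbounded. Applying (1) with $\epsilon = 1/m$ for each $m$, we find $g$ outside any prescribed finite set with $d(T_g(x),T_g(y))<1/m$. Hence $\liminf_{g\in G} d(T_g(x),T_g(y)) = 0$ for all $x,y\in X$, which is \Cref{eq: def of proximality}.
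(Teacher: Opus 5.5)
Your proof is correct. The paper does not prove this lemma itself; it only points to \cite{Bartnicka}. So there is no in-text argument to compare against, and yours is a complete, self-contained one.

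The route you take is the standard one. Every minimal subset of $X$ equals $\{x_0\}$, so by projecting, every minimal subset of $X\times X$ equals $\{(x_0,x_0)\}$. A compactness argument then shows that a point returns syndetically to every neighbourhood of the unique minimal set in its orbit closure.

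The two places you flagged are fine. Non-syndeticity applied to the finite set $-F_k$ gives $g_k$ with $(g_k+F_k)\cap S=\emptyset$. The limit point $w$ inherits $(T\times T)_h(w)\notin U$ from the continuity of the single map $(T\times T)_h$ and the closedness of $U^c$.

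There are only two cosmetic slips:
\begin{enumerate}
\item With the max-metric, $(T\times T)_g(z)\in U$ gives $d(T_g(x),T_g(y))<2\epsilon$, not $<\epsilon$, so run Step 2 with $\epsilon/2$.
\item In Step 3, $S+K=G$ means $S$ meets every set $g-K$ rather than $g+K$. This is equally good for producing elements outside any finite set.
\end{enumerate}

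Two remarks on what your argument gives.

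First, Step 1 uses (2) only through the fact that $x_0$ lies in every orbit closure. So you have actually shown that (1) and (2) are equivalent to the weaker-looking condition that $\{x_0\}$ is the unique minimal subset of $X$.

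Second, if you prefer to avoid minimal sets and the proof by contradiction, there is a direct version. By compactness, for each $\delta>0$ there is a finite $K_\delta\subset G$ such that every point of $X$ is sent into the $\delta$-ball around $x_0$ by some $T_k$ with $k\in K_\delta$. Since $x_0$ is fixed, you can choose $\delta$ so that every $T_k$ with $k\in K_{\epsilon/2}$ maps the $\delta$-ball into the $\epsilon/2$-ball. Now take any $g$:
\begin{itemize}
\item pick $k_1\in K_\delta$ that brings $T_g(x)$ into the $\delta$-ball around $x_0$;
\item then pick $k_2\in K_{\epsilon/2}$ that brings $T_{g+k_1}(y)$ into the $\epsilon/2$-ball, and by the choice of $\delta$, $T_{g+k_1+k_2}(x)$ is in that ball too.
\end{itemize}
Hence the set in (1) meets every translate $g+K_\delta+K_{\epsilon/2}$, and this window is even uniform in $x$ and $y$.
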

	
	\begin{definition}
		\label{def: Ergodic topological dynamical system}
		We say $(X,T)$ is \index{Ergodic!Topological Dynamical System}\textit{ergodic} if every $T-$invariant closed proper subset of $X$ is nowhere dense (see Definition II.2 in \cite{Furstenberg}). We say it is \index{Dynamical System !Topological Weakly mixing} \textit{topologically weakly mixing} if $X \times X$ is ergodic.
	\end{definition}

	We now shift our focus to measure theoretical dynamical systems.
	
	\begin{definition}
		A \index{Dynamical System!Measure Theoretical}measure theoretical dynamical system is a quadruple $(X,\mathcal{F}, \mu, T)$, such that $(X,\mathcal{F}, \mu)$ is a probability space, and $T$ is an action of a group $G$ on $X$ by measurable functions that preserve $\mu$, that is, a map $T:G \times X \rightarrow X$ such that for every $A \in \mathcal{F}$ and $g\in G$, we have $$\mu(T_g^{-1}(A)) = \mu(A).$$
	\end{definition}
	
	Alternatively, we also say that $\mu$ is $T-$invariant, if $T$ preserves $\mu$. Most times we will write the dynamical system  $(X,\mathcal{F}, \mu, T)$ as $(X, \mu, T)$ instead, since $\mu$ will always be assumed to be defined on the Borel $\sigma-$algebra of $X$ (which is the $\sigma-$algebra generated by the open sets of $X$).
	
	A morphism between dynamical systems $(X,\mathcal{F}, \mu, T)$ and $(Y,\mathcal{G}, \nu, S)$ is a map $\phi:X' \rightarrow Y$ where $X'$ is a subset of $X$ such that $\mu(X') = 1$, satisfying the properties:
	
	\begin{enumerate}
		\item For all $x\in X'$ and $g \in G$, $\phi(T_g(x)) = S_g(\phi(x)).$
		\item The function $\phi$ is measurable, and $\nu = \phi_*(\mu)$,
	\end{enumerate}
	where the \index{Pushforward}\textit{pushforward} of a measure $\mu$ by a map $\phi$ is the measure defined by the equation $$\phi_*(\mu)(U) = \mu(\phi^{-1}(U))$$ for all measurable $U \in \mathcal{G}$. An isomorphism of the dynamical systems $(X,\mathcal{F}, \mu, T)$ and $(Y,\mathcal{G}, \nu, S)$ is a bijective map $\phi:X' \rightarrow Y'$, where $X'$ and $Y'$ are subsets of $X,Y$ satisfying $\mu(X') = 1$ and $\nu(Y') = 1$ respectively, for which (1) and (2) hold.
	
	If there is a morphism $\phi$ between the systems $(X,\mathcal{F}, \mu, T)$ and $(Y,\mathcal{G}, \nu, S)$, then we say that $Y$ is a factor of $X$. Notice that $(2)$ implies that $\nu(\phi(X)) = 1$, so whenever $\phi$ is a morphism, there must be sets $X'$ and $Y'$ of full measure such that $\phi$ restricted to $X'$ is surjective on $Y'$.
	
	\begin{definition}
		Given a topological dynamical system $(X,T)$, we say that a $T-$invariant Borel measure $\mu$ is \index{Ergodic!Measure}\textit{ergodic} if for any measurable set $U$ such that $$\mu(U \Delta T_g(U)) = 0$$ for every $g \in G$, we have $\mu(U) \in \{0,1\}$.
	\end{definition}
	
	Here, $\Delta$ denotes the symmetric difference $A\Delta B = (A \cup B)\setminus (A\cap B)$ for sets $A,B$. In particular, if for all $g \in G$ we have  $T_g(U) = U$ and $\mu$ is ergodic, then $\mu(U) \in \{0,1\}$. Notice that if $(X,\mathcal{F}, \mu, T)$  is an ergodic system (that is, a measure theoretical dynamical system such that $\mu$ is ergodic for $(X,T)$), and $(Y,\mathcal{G}, \nu, S)$ is one of its factors, then it must also be ergodic.
	
	\begin{definition}
		\label{def: measures in dynamical system}
		Given a topological dynamical system $(X,T)$, we write $\mathcal{M}(X)$\index{Space of Probability Measures} for the set of probability measures on $X$, and $\mathcal{M}^T(X)$ for the set of $T-$invariant probability measures on $X$. 
	\end{definition}
	
	The following is a classical result about $\mathcal{M}^T$ (see Theorem 8.4 in \cite{Ward}). 
	
	\begin{theorem}
		\label{thm: ergodic is extremal}
		The set  $\mathcal{M}^T(X)$ is a non-empty convex closed subset of $\mathcal{M}(X)$. Additionally, a measure $\mu$ is an extremal point of $\mathcal{M}^T(X)$ if and only if it is ergodic. 
	\end{theorem}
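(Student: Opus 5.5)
The statement bundles three claims, which I will prove separately: $\mathcal{M}^T(X)$ is non-empty, it is convex and closed, and its extreme points are exactly the ergodic measures. Throughout, $\mathcal{M}(X)$ carries the weak$^*$ topology, in which it is compact by the Riesz representation theorem and Banach--Alaoglu, since $X$ is a compact metric space. I use the standing assumption that $G$ is finitely generated and abelian, hence countable and amenable.

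\textbf{Convex and closed.} A measure $\mu$ is $T$-invariant if and only if
$$\int f\circ T_g\,d\mu=\int f\,d\mu \quad \text{for all } f\in C(X),\ g\in G.$$
Each such condition is affine in $\mu$ and weak$^*$-closed. An intersection of closed convex sets is closed and convex, so $\mathcal{M}^T(X)$ is a closed convex subset of $\mathcal{M}(X)$.

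\textbf{Non-empty.} I will use a Krylov--Bogolyubov argument. Take a Følner sequence $F_N\subset G$; for $G\cong\z^n\times(\text{finite})$ one can take cubes times the finite part. Fix $x\in X$ and set
$$\mu_N=\frac{1}{|F_N|}\sum_{g\in F_N}\delta_{T_g x}.$$
By compactness some subsequence converges weak$^*$ to a measure $\mu$. For $f\in C(X)$ and $h\in G$,
$$\left|\int f\circ T_h\,d\mu_N-\int f\,d\mu_N\right|\le \frac{2\|f\|_\infty\,|F_N\Delta (F_N+h)|}{|F_N|}\longrightarrow 0.$$
Passing to the limit shows that $\mu$ is invariant.

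\textbf{Not ergodic implies not extremal.} Suppose $U$ satisfies $\mu(U\Delta T_gU)=0$ for all $g$ and $0<\mu(U)<1$. Since $G$ is countable, $U'=\bigcap_{g\in G}T_gU$ is strictly invariant and differs from $U$ by a null set. The normalized restrictions $\mu_{U'}$ and $\mu_{X\setminus U'}$ are then invariant probability measures. They are distinct, since they are mutually singular, and
$$\mu=\mu(U')\,\mu_{U'}+\bigl(1-\mu(U')\bigr)\,\mu_{X\setminus U'}.$$
So $\mu$ is not extremal.

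\textbf{Ergodic implies extremal.} Suppose $\mu$ is ergodic and $\mu=t\mu_1+(1-t)\mu_2$ with $0<t<1$ and $\mu_1,\mu_2$ invariant. Then $\mu_1\le t^{-1}\mu$, so $\mu_1\ll\mu$ with a Radon--Nikodym density $\rho$ satisfying $0\le \rho\le 1/t$. Invariance of both $\mu$ and $\mu_1$ gives, for every measurable $A$,
$$\int_A \rho\circ T_g\,d\mu=\mu_1\bigl(T_g A\bigr)=\mu_1(A)=\int_A\rho\,d\mu.$$
By uniqueness of the density, $\rho\circ T_g=\rho$ $\mu$-a.e. for each $g$. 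Hence each level set $\{\rho>c\}$ is invariant mod $\mu$-null sets, and by ergodicity has measure $0$ or $1$. Therefore $\rho$ is a.e. constant, and since $\mu_1$ is a probability measure, $\rho=1$. This gives $\mu_1=\mu$, and similarly $\mu_2=\mu$, so $\mu$ is extremal.

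I expect the main technical point to be this last step: checking that the density is invariant, and keeping track of the distinction between sets that are invariant only modulo null sets and sets that are genuinely invariant. Countability of $G$ handles the latter.
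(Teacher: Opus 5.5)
Your proof is correct and follows the standard textbook argument. The paper proves nothing here and simply cites Theorem 8.4 and Corollary 4.2 of \cite{Ward}. The ingredients are the usual ones: closedness and convexity from the defining identities, Krylov--Bogolyubov averaging for existence, restriction to a strictly invariant set (via countability of $G$) to split a non-ergodic measure, and invariance of the Radon--Nikodym density for the converse. Your F\o lner-average form of Krylov--Bogolyubov is the right version for the $\z^n$-actions used in the paper. The one implicit ingredient is metrizability of $\mathcal{M}(X)$, which follows from separability of $C(X)$ and is what lets you extract a convergent subsequence.
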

	
	By an extremal point, we mean a point $z$ such that if $z$ can be written as $z = \alpha x + \beta y$ with $\alpha+\beta = 1$, $\alpha,\beta >0$, then $z = x =y$.
	
	The fact that  $\mathcal{M}^T(X)$ is always non-empty is known as the Krylov–Bogolyubov theorem\index{Krylov–Bogolyubov Theorem} (see Corollary 4.2 in \cite{Ward}). As a corollary of \Cref{thm: ergodic is extremal}, note that for every dynamical system $(X,T)$, there must exist some ergodic measure.

	A particularly important class of dynamical systems are minimal rotations. Let $X$ be a compact metrizable abelian group with identity element $\textbf{0}$ and Haar measure $m_X$. We call a topological dynamical system $(X,T)$ where $T_{g}(x) = T_g(\textbf{0})+x$ a \index{Minimal Rotation of a Compact Group}\textit{rotation} of $X$. We write $m_X$ for the Haar measure on $X$ and $\textbf{0}$ for the identity in $X$. Since $m_X$ is invariant under all translations, it is $T-$invariant. We say a system $(X,T)$ is \index{Uniquely Ergodic System}\textit{uniquely ergodic}, if $M(X)$ contains a unique measure (by  \Cref{thm: ergodic is extremal}, this measure must be ergodic). The following result shows that a rotation $(X,T)$ is uniquely ergodic if and only if it is a minimal system. In Theorem 4.14 of \cite{Ward} a proof is shown in the case $G= \z$, which easily generalizes for an abelian locally compact $G$.
	
	\begin{theorem}
		\label{thm: minimal rotation of group}
		Let $(X,T)$ be a rotation of $X$. The following are equivalent.
		\begin{enumerate}
			\item The system $(X,T)$ is uniquely ergodic.
			\item $T$ is ergodic for $m_X$.
			\item The set $\{T_g(\textbf{0}):g \in G\}$ is dense in X. 
		\end{enumerate}
	\end{theorem}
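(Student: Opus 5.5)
The plan is to prove the cycle $(1)\Rightarrow(2)\Rightarrow(3)\Rightarrow(1)$. First I record one structural fact. Since $T_{g_1g_2}(\textbf{0}) = T_{g_1}(T_{g_2}(\textbf{0})) = T_{g_1}(\textbf{0}) + T_{g_2}(\textbf{0})$, the map $g\mapsto T_g(\textbf{0})$ is a group homomorphism $G\to X$. Hence the orbit $\{T_g(\textbf{0}):g\in G\}$ is a subgroup of $X$, and its closure $H$ is a closed subgroup.

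For $(1)\Rightarrow(2)$: $m_X$ is $T$-invariant, so if $\mathcal{M}^T(X)$ has a single element, that element is $m_X$. A single point is trivially an extremal point of $\mathcal{M}^T(X)$, so $m_X$ is ergodic by \Cref{thm: ergodic is extremal}.

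For $(3)\Rightarrow(1)$: let $\mu\in\mathcal{M}^T(X)$ and $f\in C(X)$, and set $F(x) := \int f(x+y)\,d\mu(y)$.
\begin{itemize}
\item Since $X$ is a compact group, $f$ is uniformly continuous, so $F$ is continuous on $X$.
\item Invariance of $\mu$ gives $F(T_g(\textbf{0})) = F(\textbf{0})$ for every $g\in G$.
\item By density of the orbit and continuity of $F$, $F$ is constant, equal to $\int f\,d\mu$.
\end{itemize}
Thus $\mu$ is invariant under every translation of $X$. By uniqueness of normalized Haar measure on a compact group, $\mu = m_X$, so the system is uniquely ergodic.

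For $(2)\Rightarrow(3)$, which I expect to be the main step: suppose $H\neq X$ and build a $T$-invariant set of intermediate Haar measure.
\begin{itemize}
\item Pick $x_1\notin H$. Since $H$ is closed and translation-invariant under itself, there is a symmetric open neighbourhood $U$ of $\textbf{0}$ with $(H+U+U)\cap(x_1+U)=\emptyset$. Here I use compactness and metrizability to get such a $U$, for instance via a translation-invariant metric, or via the continuity of $(a,b)\mapsto a+b$.
\item Set $V := H+U$. It is open and nonempty, so $m_X(V)>0$.
\item $V$ is disjoint from the nonempty open set $x_1+U$, so $m_X(V)<1$.
\item $V$ is $T$-invariant, because $T_g(V) = T_g(\textbf{0})+H+U = V$ since $T_g(\textbf{0})\in H$.
\end{itemize}
This contradicts ergodicity of $m_X$, so $H=X$, which is $(3)$.

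The only delicate point is producing the separating neighbourhood $U$. Alternatively one could use Pontryagin duality to get a nontrivial character trivial on $H$, which is then a nonconstant invariant $L^2$ function, but I would take the elementary route above.
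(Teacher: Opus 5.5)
Your proof is correct. The paper gives no argument of its own here: it only points to Theorem 4.14 of Einsiedler--Ward for $G=\z$ and asserts that it ``easily generalizes''. Your write-up is therefore best read as actually supplying that generalization, by a somewhat different route.

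The real difference from the usual treatments of the $\z$ case is in $(2)\Rightarrow(3)$. Textbook proofs typically produce the obstruction in one of two ways:
\begin{itemize}
\item via duality: a nontrivial character of $X/H$, pulled back to $X$, is a nonconstant $T$-invariant function;
\item via the pointwise ergodic theorem: a generic point for $m_X$ exists, translation-equivariance makes every point generic, and this forces unique ergodicity and hence minimality.
\end{itemize}
Carried over to a general $G$, these need Pontryagin duality or an ergodic theorem along tempered Følner sequences such as \Cref{thm: ergodic theorem}.

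Your route avoids both. Besides uniqueness of Haar measure and the extremal-point characterization of ergodicity, its only ingredient is the invariant open set $V=H+U$. This is just the saturation $\bigcup_{g\in G}T_g(U)$ of $U$: since $U$ is open, $H+U$ equals the orbit of $\textbf{0}$ plus $U$. Your argument also never uses that $G$ is finitely generated, abelian or locally compact. What the duality route buys in exchange is explicit spectral information (the obstruction and the eigenfunctions of the rotation are characters), which your soft argument does not give.

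A minor point: producing $U$ needs neither compactness nor metrizability, only that $H$ is closed and addition is continuous. Also, $(H+U)\cap(x_1+U)=\emptyset$ already suffices, so the extra $+U$ is harmless but unnecessary.
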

	
	 Throughout we will be working with Følner sequences in order to define densities.
	
	\begin{definition}
		\label{def: Følner sequence}
		We say that a sequence of finite, non-empty sets $I_N \subset G$, $N \geq 1$, is a \index{Følner Sequence}\textit{Følner sequence} if for every $x\in G$,  $$\lim_{N \rightarrow \infty} \frac{|(x+I_N) \Delta I_N|}{|I_N|} = 0.$$
	\end{definition}

	As an example, when $G=\z$, the sequence $I_N = [0,N]$ is a Følner sequence. Additionally, notice that if $I_N$ is any Følner sequence, and $F_N$ is given by $F_N = a_N+ I_N $ for some $a_N \in \co_K$, then given $x\in \co_K$ we have $$\lim_{N \rightarrow \infty} \frac{|(x+F_N) \Delta F_N|}{|F_N|} = \lim_{N \rightarrow \infty} \frac{|a+((x+I_N) \Delta I_N)|}{|I_N|} = 0,$$ and so $F_N$ is again a Følner sequence.
 Given a topological dynamical system $(X,T)$, we will write $\htop(X,T)$ for its topological entropy, and given a measure theoretical dynamical system $(X,T,\mu)$, we will denote its measure theoretical entropy by $h(X,T,\mu)$. For definitions, see Chapter 9 of \cite{Kerr}.
	
	When $X$ is a shift space\index{Shift Space}, that is, $X \subset \mathcal{A}^{G}$ for some finite set $\mathcal{A}$ and $T$ is defined by $$(T_ax)(m) = x(m-a),$$  then we can consider its \index{Entropy!Patch Counting}\textit{patch counting entropy}. Let  $x|_{I}$ denote the restriction of the function $x:G \rightarrow \mathcal{A}$ to some $I \subset G$. The patch counting entropy is given by $$\hpc(X,T) = \limsup_{N\rightarrow\infty}\frac{\log(|\{x|_{-I_N}: x \in X \}|)}{|I_N|},$$  independent of the the chosen Følner sequence $I_N$.  We have the following result (see Example 9.41 in \cite{Kerr}).
	
	\begin{lemma}
		\label{lm:Topological and patch  counting are same}
		If $(X,T)$ is a shift space, then $\hpc(X,T) = \htop(X,T)$.
	\end{lemma}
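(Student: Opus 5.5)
The plan is to compute $\htop(X,T)$ directly from its open-cover definition, using the natural clopen partition of a shift space by the symbol at the origin. For a finite set $F \subset G$, let $\mathcal{P}_F$ be the partition of $X$ into the nonempty cylinder sets $[w] = \{x \in X : x|_F = w\}$. Each such partition is clopen, hence also an open cover. Write $\mathcal{P} = \mathcal{P}_{\{\textbf{0}\}}$. Since $(T_gx)(\textbf{0}) = x(-g)$, the join $\bigvee_{g \in I_N} T_g^{-1}\mathcal{P}$ is exactly $\mathcal{P}_{-I_N}$. Its number of nonempty atoms is $|\{x|_{-I_N} : x\in X\}|$.

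For a partition that is also an open cover, the minimal cardinality of a subcover equals the number of nonempty atoms, because the atoms are disjoint. So
$$h_{\text{top}}(X,T,\mathcal{P}) = \lim_{N}\frac{\log|\{x|_{-I_N}: x\in X\}|}{|I_N|}.$$
The limit exists and is independent of the Følner sequence $I_N$ by the Ornstein--Weiss subadditivity lemma applied to the subadditive, translation-invariant function $F\mapsto \log|\mathcal{P}_{-F}|$. This identifies $\hpc(X,T)$, where the $\limsup$ is in fact a limit, with the entropy of the single cover $\mathcal{P}$. In particular $\hpc(X,T) \le \htop(X,T)$.

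For the reverse inequality I would show that $\mathcal{P}$ is a topological generator. First, take an arbitrary open cover $\mathcal{U}$ of $X$. The product topology on $\mathcal{A}^G$ has the cylinders as a base, the windows $F_k$ increase to $G$, and $X$ is compact. So by a Lebesgue number argument there is a finite window $F$ with $\mathcal{P}_F$ refining $\mathcal{U}$. Hence $h_{\text{top}}(X,T,\mathcal{U}) \le h_{\text{top}}(X,T,\mathcal{P}_F)$, and it suffices to bound the entropy of each $\mathcal{P}_F$.

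Next, note that $\bigvee_{g\in I_N}T_g^{-1}\mathcal{P}_F = \mathcal{P}_{-(I_N+F)}$, and that $\mathcal{P}_{-(I_N+F)}$ has at most $|\mathcal{P}_{-I_N}|\cdot|\mathcal{A}|^{|(I_N+F)\setminus I_N|}$ atoms. The Følner property gives $|(I_N+F)\setminus I_N| \le \sum_{f\in F}|(f+I_N)\setminus I_N| = o(|I_N|)$. Dividing the logarithm by $|I_N|$ and letting $N\to\infty$ yields $h_{\text{top}}(X,T,\mathcal{P}_F) \le h_{\text{top}}(X,T,\mathcal{P}) = \hpc(X,T)$. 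Taking the supremum over all $\mathcal{U}$ gives $\htop(X,T)\le\hpc(X,T)$, which completes the argument.

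I expect the main obstacle to be the measure-free bookkeeping for a general amenable $G$ rather than $\z$. One has to make sure the topological entropy of a cover is defined as a limit along the Følner sequence and that this limit is independent of the choice of $I_N$; this is where the Ornstein--Weiss lemma is invoked. One also has to check that the sign convention $-I_N$ in the definition of $\hpc$ matches the join $\bigvee_{g\in I_N}T_g^{-1}\mathcal{P}$. The remaining steps are the routine counting estimates above.
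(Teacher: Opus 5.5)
Your argument is correct. The paper does not prove this lemma; it only cites Example 9.41 of Kerr--Li. What you have written is the standard generator argument behind that citation, made self-contained.

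The argument has two halves. First, the clopen time-zero partition $\mathcal{P}$ has cover entropy equal to the patch-counting growth rate, with the Ornstein--Weiss lemma giving existence of the limit and independence from $I_N$. Second, $\mathcal{P}$ is generating: every open cover is refined by some $\mathcal{P}_F$, and the entropy of $\mathcal{P}_F$ exceeds that of $\mathcal{P}$ only by a F\o{}lner boundary term that vanishes in the limit.

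Compared with the bare citation, your version also justifies the paper's unproved remark that the $\limsup$ defining $\hpc(X,T)$ is a limit independent of the F\o{}lner sequence.

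One small sign slip. With $(T_gx)(m)=x(m-g)$ one has
\[
T_g^{-1}\{x: x|_F=w\}=\{x: x(f-g)=w(f)\ \text{for all } f\in F\}.
\]
Hence $\bigvee_{g\in I_N}T_g^{-1}\mathcal{P}_F=\mathcal{P}_{F-I_N}=\mathcal{P}_{-(I_N-F)}$, not $\mathcal{P}_{-(I_N+F)}$. This is harmless. The F\o{}lner condition holds for $-f$ as well, so $|(I_N-F)\setminus I_N|=o(|I_N|)$. Your counting bound also still holds, because you can count patterns on the union of the two windows.
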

	
	Topological and measure theoretical entropies are related by the Variational Principle (see Theorem 9.48 in \cite{Kerr}).
	
	\begin{theorem}
		\label{thm: variational principle}
		Let $(X,T)$ be a topological dynamical system. Let $\mathcal{M}^T(X)$ be the set of $T-$invariant measures (as in \Cref{def: measures in dynamical system}). We have $$\htop(X,T) = \limsup_{\mu \in \mathcal{M}^T(X)} h(X,T,\mu).$$
	\end{theorem}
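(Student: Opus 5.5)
We read the supremum over $\mu \in \mathcal{M}^T(X)$ in \Cref{thm: variational principle} as a supremum (the $\limsup$ notation means the same thing here). We prove the two inequalities separately, following Misiurewicz's argument adapted to $G\cong\z^n$. Since $G\cong\z^n$ throughout, we work with the Følner sequence of cubes $I_N=[0,N)^n$, which tile $G$ exactly. This lets us avoid Ornstein--Weiss quasitilings. For a finite set $F\subset G$ we use the Bowen metric $d_F(x,y)=\max_{g\in F}d(T_gx,T_gy)$. We also use the standard facts that $\htop$ can be computed by $(F,\epsilon)$-separated sets along $I_N$, and that $h(X,T,\mu)=\sup_{\mathcal P}h_\mu(T,\mathcal P)$ over finite Borel partitions $\mathcal P$.

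\emph{Step 1: $h(X,T,\mu)\le\htop(X,T)$ for every $\mu\in\mathcal{M}^T(X)$.}
\begin{enumerate}
\item Fix a finite partition $\mathcal P=\{P_1,\dots,P_k\}$. By regularity, choose compact sets $Q_i\subset P_i$ with $\mu(P_i\setminus Q_i)$ small.
\item Let $\mathcal Q=\{Q_0,Q_1,\dots,Q_k\}$ with $Q_0=X\setminus\bigcup_i Q_i$. Then $H_\mu(\mathcal P\mid\mathcal Q)$ is small, so $h_\mu(T,\mathcal P)\le h_\mu(T,\mathcal Q)+1$ after adjusting constants.
\item The sets $Q_0\cup Q_i$ are open and form a cover $\mathcal U$ of $X$. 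Each element of $\mathcal U^{I_N}$ meets at most $2^{|I_N|}$ atoms of $\mathcal Q^{I_N}$.
\item Hence $H_\mu(\mathcal Q^{I_N})\le \log N(\mathcal U^{I_N})+|I_N|\log 2$. Dividing by $|I_N|$ gives $h_\mu(T,\mathcal P)\le \htop+\log 2+1$.
\item Remove the additive constant by applying this bound to the product system indexed by $mG$, i.e.\ the action restricted to the subgroup $m\z^n$. Both entropies scale by the factor $m^n$, so dividing and letting $m\to\infty$ kills the constant.
\end{enumerate}

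\emph{Step 2: a measure with entropy at least $\htop-\delta$.}
\begin{enumerate}
\item Fix $\epsilon>0$. Choose a maximal $(I_N,\epsilon)$-separated set $E_N$, so that $\limsup_N \frac{1}{|I_N|}\log|E_N|$ is the $\epsilon$-separated growth rate.
\item Form $\sigma_N=\frac1{|E_N|}\sum_{x\in E_N}\delta_x$ and its average $\mu_N=\frac1{|I_N|}\sum_{g\in I_N}(T_g)_*\sigma_N$.
\item Pass to a subsequence realizing the $\limsup$, and take a weak$^*$ limit $\mu$. The Følner property gives $\|(T_h)_*\mu_N-\mu_N\|\le |(h+I_N)\Delta I_N|/|I_N|\to0$, so $\mu\in\mathcal{M}^T(X)$.
\item Choose a partition $\mathcal P$ with atoms of diameter $<\epsilon$ and $\mu(\partial P)=0$. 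Then each atom of $\mathcal P^{I_N}$ contains at most one point of $E_N$, so $H_{\sigma_N}(\mathcal P^{I_N})=\log|E_N|$.
\end{enumerate}

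\emph{Main obstacle: transferring entropy from $\sigma_N$ to $\mu$.} This is Misiurewicz's combinatorial trick, and we expect it to be the hardest step.
\begin{enumerate}
\item Fix $M\ll N$. For each offset $s\in[0,M)^n$, decompose $I_N$ into the translates $s+Mz+I_M$ that lie inside $I_N$, plus a boundary layer of size $O(nM N^{n-1})$.
\item Subadditivity of $H$ and invariance of the pullbacks give
$$\log|E_N|\le\sum_{z}H_{(T_{s+Mz})_*\sigma_N}(\mathcal P^{I_M})+O\big(nMN^{n-1}\log|\mathcal P|\big).$$
\item Average over the $M^n$ offsets $s$ and use concavity of $H$ in the measure. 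This yields
$$\frac{M^n}{|I_N|}\log|E_N|\le H_{\mu_N}(\mathcal P^{I_M})+o(1).$$
\item Let $N\to\infty$ along the subsequence. Since $\mu(\partial P)=0$, $H_{\mu_N}(\mathcal P^{I_M})\to H_\mu(\mathcal P^{I_M})$.
\item Dividing by $|I_M|=M^n$ and letting $M\to\infty$ gives $h(X,T,\mu)\ge h_\mu(T,\mathcal P)\ge$ the $\epsilon$-separated growth rate. Letting $\epsilon\to0$ gives the reverse inequality.
\end{enumerate}

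The delicate points are the uniform control of the boundary layers, which is exactly where the exact tiling by cubes is used, and the choice of $\mathcal P$ with $\mu$-null boundaries. The latter is possible because for each $x$ only countably many radii $r$ have $\mu(\partial B(x,r))>0$.
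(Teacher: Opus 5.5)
Your proof is correct, but it does not follow the paper. The paper gives no proof here: it simply quotes the variational principle for continuous actions of countable amenable groups (Theorem 9.48 of Kerr--Li). You are also right to read its $\limsup$ as a supremum.

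You instead specialize Misiurewicz's classical argument to $G\cong\mathbb{Z}^n$. Two features of $\mathbb{Z}^n$ keep your proof short:
\begin{itemize}
\item The exact tiling of $[0,N)^n$ by translates of $[0,M)^n$ makes the offset-averaging step a verbatim copy of the one-dimensional computation, with a boundary layer of size $O(nMN^{n-1})$.
\item Restriction to the finite-index subgroup $m\mathbb{Z}^n$, under which both entropies scale by $m^n$, removes the constant $\log 2+1$ from Step 1.
\end{itemize}

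Your route gives a self-contained, elementary proof of everything the paper actually uses, since all its actions are by $\mathcal{O}_K\cong\mathbb{Z}^n$. The citation gives generality. Both of your devices are special to $\mathbb{Z}^n$; a general amenable group need not have any proper finite-index subgroup, so the general proof needs heavier machinery. The cited result also supplies the independence of entropy from the F\o lner sequence. You rely on this implicitly when you compute everything along cubes, while the paper later computes entropy along the balls $B_N$.

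Two small wording fixes. In Step 1(5) you mean the restricted action, not a ``product system''. In Step 2(1), $E_N$ should have maximum cardinality. An inclusion-maximal separated set also works, but it only yields the $2\epsilon$-separated growth rate, which is harmless as $\epsilon\to0$.
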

	
	We finish this section with the definition of a \index{Generic Point}\textit{generic} point.
	
	\begin{definition}
		\label{def: generic point}
		Let $(X,T,\mu)$ be a dynamical system and $I_N$ a Følner sequence.  We say that $x\in X$ is generic with respect to $I_N$ if for every continuous function $f \in C(X)$ we have $$\lim_{N \rightarrow \infty} \frac{1}{|I_N|}\sum_{a\in I_N} f(T_a(x)) = \int_X f \; d \mu.$$
	\end{definition}
	
	The Ergodic Theorem gives conditions for almost every point in a system to be generic. The following very general form was shown by Lindenstrauss in \cite{Lindestrauss}. We say that a Følner sequence $I_N$ is \index{Følner Sequence!Tempered}\textit{tempered} if there is some constant $C$ such that for all $N$,
	\begin{equation}
		\label{eq:tempered Følner sequence}
		\left|\bigcup_{L<N} -I_L + I_N\right| < C|I_N|.
	\end{equation}  Note that every Følner sequence has a tempered subsequence (see Proposition 1.4 in \cite{Lindestrauss}). The Pointwise Ergodic Theorem states the following.
	
	\begin{theorem}
		\label{thm: ergodic theorem}
		Let $(X,T,\mu)$ be an ergodic dynamical system, and $I_N$ a tempered Følner sequence. Then, for every $f\in L^1(X)$, we have $$\lim_{N \rightarrow \infty} \frac{1}{|I_N|} \sum_{a\in I_N} f(T_a(x)) = \int_X f d \mu$$ for $\mu-$almost ever $x \in X$.
		
	\end{theorem}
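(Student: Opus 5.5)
The statement is Lindenstrauss' pointwise ergodic theorem, and I would follow the classical two-part scheme: a maximal inequality, plus convergence on a dense subclass, glued together by the Banach principle. Throughout, $G\cong\z^n$ acts by measure-preserving maps and $I_N$ is tempered with constant $C$ as in \Cref{eq:tempered Følner sequence}. Write
$$A_Nf(x):=\frac{1}{|I_N|}\sum_{a\in I_N}f(T_a(x)).$$

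\emph{Step 1 (dense class).} Consider the subspace $\mathcal{D}\subset L^1(X)$ spanned by the constants and by the coboundaries $h-h\circ T_g$, with $h\in L^\infty(X)$ and $g\in G$.
\begin{itemize}
\item For a coboundary, $A_N(h-h\circ T_g)$ is bounded pointwise by $\|h\|_\infty\,|I_N\,\Delta\,(g+I_N)|/|I_N|$. This tends to $0$ everywhere by the Følner property.
\item For constants, $A_N$ acts trivially.
\end{itemize}
Next I show $\mathcal{D}$ is dense in $L^1$. A functional in $L^\infty=(L^1)^*$ annihilating all coboundaries is $T$-invariant. By ergodicity it is a.e. constant, and if it also annihilates the constants it is zero. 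Hence by Hahn--Banach $\mathcal{D}$ is dense. This is the von Neumann-type step, and it also identifies the limit on $\mathcal{D}$ as $\int_X f\,d\mu$.

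\emph{Step 2 (maximal inequality).} I aim for a weak type $(1,1)$ bound
$$\mu\Bigl(\sup_N A_N|f|>\lambda\Bigr)\le \frac{C'}{\lambda}\|f\|_1,$$
with $C'$ depending only on the tempering constant $C$. By the Calderón transference principle, it suffices to prove the analogous combinatorial statement on $G$ itself. Namely, for $\varphi\ge0$ finitely supported on $G$, the set $E$ of $g$ for which some $N$ satisfies $\sum_{a\in I_N}\varphi(g+a)>\lambda|I_N|$ has $|E|\le C'\lambda^{-1}\|\varphi\|_{\ell^1}$. To get this, truncate to $N\le M$. For each $g\in E$, choose a witnessing $N$ and cover $E$ by translates $g+I_N$. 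This requires a Vitali-type covering lemma for the family $\{I_N\}$.

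\emph{Step 3 (conclusion).} Given $f\in L^1(X)$ and $\epsilon>0$, write $f=d+r$ with $d\in\mathcal{D}$ and $\|r\|_1<\epsilon$. Then
$$\limsup_N A_Nf-\liminf_N A_Nf\le 2\sup_N A_N|r|,$$
so the maximal inequality gives
$$\mu\Bigl(\limsup_N A_Nf-\liminf_N A_Nf>\lambda\Bigr)\le \frac{2C'\epsilon}{\lambda}.$$
Letting $\epsilon\to0$ shows $A_Nf$ converges a.e. The same estimate shows the limit is within $\lambda$ of $\int_X d\,d\mu$ off a set of small measure. Since $\int_X d\,d\mu$ is itself within $\epsilon$ of $\int_X f\,d\mu$, the limit equals $\int_X f\,d\mu$ a.e.

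\emph{Main obstacle.} The hard step is the covering lemma in Step 2. For general Følner sequences a deterministic Vitali lemma fails, because the sets $I_N$ need not be nested or doubling. Temperedness is exactly the substitute. I would follow Lindenstrauss' random covering argument:
\begin{itemize}
\item Process the scales $N=M,M-1,\dots,1$ in decreasing order.
\item At each scale, select each candidate translate $g+I_N$ independently with probability proportional to $1/|I_N|$, discarding the portion already covered by larger scales.
\item The bound $\bigl|\bigcup_{L<N}-I_L+I_N\bigr|<C|I_N|$ controls how many smaller sets can meet a given $I_N$.
\end{itemize}
This yields, in expectation, a disjointified subcover that covers a fixed proportion $\delta(C)>0$ of $E$ with bounded overlap. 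I expect getting this expectation estimate right to be the most delicate part of the whole argument.
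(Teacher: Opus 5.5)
Your outline is correct and is essentially Lindenstrauss' own proof: a dense class of constants plus $L^\infty$-coboundaries, transference to a combinatorial weak-$(1,1)$ inequality on $G$, and his random covering lemma driven by temperedness. The paper gives no argument of its own for this theorem and simply cites Lindenstrauss, so your route is the one it relies on.

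One point should be stated more carefully: the output of the covering step. Since $0$ need not lie in $I_N$, the selected translates $g+I_N$ need not contain their centres, so you should not claim they cover a proportion of $E$. What the random selection actually delivers, and what the maximal inequality needs, is that the expected total cardinality of the selected translates is at least $\delta(C)\,|E|$, with expected pointwise multiplicity at most $1+\delta$.
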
  
	
	\begin{remark}
		\label{rmk: ergodic measures are mutualy singular}
		In particular, we have that given an ergodic measure $\mu$ and a tempered Følner sequence, the set $\Gen(\mu,I_N)$ of generic points with respect to $I_N$ satisfies $\mu(\Gen(\mu,I_N)) = 1$ (see Corollary 8 in \cite{Host}). 
	\end{remark}
	
	The following lemma is classic and follows easily from Prokhorov's theorem (see Theorems 5.1 and 5.2 in \cite{Billingsley}).
	
	\begin{lemma}
		\label{lm: In uniquely ergodic every point generic}
		Let $(X,T)$ be a uniquely ergodic system, with unique measure $\mu$. Then, for every Følner sequence $I_N$, $$\Gen(\mu,I_N) = X.$$
	\end{lemma}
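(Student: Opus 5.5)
The plan is the standard weak-$*$ compactness argument. Fix a Følner sequence $I_N$ and a point $x \in X$. For each $N$, form the empirical measure
$$\mu_N := \frac{1}{|I_N|}\sum_{a\in I_N} \delta_{T_a(x)} \in \mathcal{M}(X).$$
The goal is to show that $\mu_N \to \mu$ in the weak-$*$ topology. By definition, that convergence is exactly the statement that $\frac{1}{|I_N|}\sum_{a\in I_N} f(T_a(x)) \to \int_X f\,d\mu$ for every $f \in C(X)$, i.e. that $x\in\Gen(\mu,I_N)$.

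The argument proceeds in three steps.

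\textbf{Step 1 (compactness).} Since $X$ is a compact metric space, $\mathcal{M}(X)$ is weak-$*$ compact and metrizable; this follows from Prokhorov's theorem, since every family of measures on a compact space is tight. Consequently, every subsequence of $(\mu_N)$ has a further subsequence converging to some $\nu \in \mathcal{M}(X)$. Note that this works for any Følner sequence, not just tempered ones.

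\textbf{Step 2 (invariance of limits).} Let $\nu$ be such a limit along $\mu_{N_k}$. Fix $g \in G$ and $f \in C(X)$. Then
$$\left|\int f\circ T_g\, d\mu_N - \int f\, d\mu_N\right| = \frac{1}{|I_N|}\left|\sum_{a \in g+I_N} f(T_a x) - \sum_{a\in I_N} f(T_a x)\right| \le \|f\|_\infty \frac{|(g+I_N)\Delta I_N|}{|I_N|}.$$
By the Følner property in \Cref{def: Følner sequence}, the right-hand side tends to $0$. Because $f\circ T_g$ is again continuous, we may pass to the limit along $N_k$ and obtain $\int f\circ T_g\,d\nu = \int f\,d\nu$ for all continuous $f$. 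By the Riesz representation theorem, this gives $\nu\in\mathcal{M}^T(X)$.

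\textbf{Step 3 (uniqueness).} By unique ergodicity, $\mathcal{M}^T(X)=\{\mu\}$, so $\nu = \mu$. Thus every subsequence of $(\mu_N)$ has a sub-subsequence converging to $\mu$. In a metrizable (or just Hausdorff compact) space, this forces the whole sequence to converge to $\mu$. Since $x$ was arbitrary, $\Gen(\mu,I_N) = X$.

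The only mildly delicate point is the invariance computation in Step 2, specifically checking the direction of the shift. We have $\int f\circ T_g\,d\mu_N = \frac{1}{|I_N|}\sum_{a\in I_N} f(T_{g+a}x)$, using that $G$ is abelian and the action law $T_gT_a = T_{g+a}$; this reindexes the sum over $g+I_N$. Apart from this, the argument is routine.
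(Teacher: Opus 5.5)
Your proof is correct and takes essentially the approach the paper has in mind. The paper gives no details beyond noting that the lemma is classic and follows from Prokhorov's theorem, i.e.\ the weak-$*$ sequential compactness of $\mathcal{M}(X)$ in your Step 1; your Følner invariance estimate for limit points and the appeal to unique ergodicity are the standard remaining steps that this citation leaves implicit.
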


	\section{Sieves and $R-$free numbers}
	
	In this section we introduce the basic objects with which we are going to work. Throughout, we assume that $K$ is an étale $\q-$algebra of degree $n$. 
	
	\subsection{Shift Spaces}

	We start with the definition of sieve.

	\begin{definition}
		\label{def: Sieve}
		A \index{Sieve}sieve over an étale $\q-$algebra $K$ is a pair $(\cb_R,(R_\gb)_{\gb \in \cb_R})$ where $\cb_R$ is an infinite set of pairwise coprime invertible ideals of $\co_K$, and each $R_\gb$ is a set of the form $R_\gb =  S_\gb + \gb $ with $S_\gb$ finite sets such that $R_\gb \neq \co_K$ for every $\gb \in \cb_R$.
	\end{definition}
	
	We say that a sieve $R$ is \textit{supported}\index{Support of a Sieve} on the set $\cb_R$. Note that we allow $R_\gb$ to be empty for any number of ideals $\gb$, as long as there are still infinitely many ideals $\gb$ such that $R_\gb \neq \emptyset$. Yet, when proving results we will always assume that $R_\gb \neq \emptyset$ for all $\gb \in \cb_R$.  In many cases it will be convenient to assume that $\cb_R$ is ordered, so that $\cb_R= \{\gb_1,\gb_2,\dots\}$, in which case we will write $R_i = S_i +\gb_i$. As an example, let $\cb = \{p_i^2:i\in \mathbb{N}\}$, where $p_i$ denotes the $i$-th prime. We can describe the squarefree sieve as the sieve $R$ supported on $\cb$ and defined by $R_{i} = p_i^2\z$. 
	
	A $\cb-$free system\footnote{We will always use the term $\cb-$free system to mean sieves that can be obtained by sieving out only ideals. This is different from the usual literature, where the term refers to the associated topological or measure theoretical dynamical systems instead. }\index{$\cb-$free System} denotes a sieve $R$ such that $R_\gb = \gb$ for every $\gb \in \cb_R$. We will not consider generalizations of sieves where one allows for $\cb_R$ to be a set of non-coprime ideals. Since this case has been treated in the literature for $\cb-$free
	systems (see for example \cite{Dymek}), we still sometimes feel the need to refer to this case. Therefore, we will use the term \index{Pseudosieve}\textit{pseudosieve} for a pair $(\cb_R,(R_\gb)_{\gb \in \cb_R})$ where $\cb$ is an infinite set of (not necessarily pairwise coprime) invertible ideals and every $R_\gb$ can be written as $A_\gb + \gb$ for some $A_\gb$, with $R_\gb \neq \co_K$.

	Given a sieve $R$ and some ideal $\gb$, we will write $|R_\gb|$ for the cardinality of the image of $R_\gb$ in $ \co_K/\gb$. This means that its complement $R_\gb^c = \co_K \setminus R_\gb$ satisfies $|R^c_\gb| = N(\gb)-|R_\gb|$. More generally, when $A$ is a subset of $\co_K$, we will write $|A+\gb|$ for the cardinality of the image $A$ in $\co_K/\gb$. We then define the \textit{volume} of a set $A \subset \co_{K}/\gb$ as $\vol(A):= |A+\gb|/N(\gb)$.
	
	\begin{definition}
		We say a sieve $R$ is \index{Sieve!Erdős}Erdős if $$\sum_{\gb \in \cb_R} \vol(R_\gb) < \infty.$$
	\end{definition}
	
	These are so named because in the $\cb-$free system literature, a system is called \textit{Erdős} if the elements of $\cb$ are pairwise coprime and $\sum_{\gb\in \cb}1/N(\gb) < \infty$, after Erdős studied such systems (over $\q$) in \cite{Erds}. 
	
	One of the main purposes of our work is to study sets which can be realized as the $R-$free numbers for some sieve, that is, those that are not sieved by any $R_\gb$. 
	
	\begin{definition}
		Given a sieve $R$ over an étale $\q-$algebra $K$, we denote by the $R-$free numbers\index{$R$-free Numbers} the set $$\fr := \co_K\setminus\bigcup_{\gb \in \cb_R} R_\gb.$$

	\end{definition}
	
	For example, let $R$ be a sieve such that $\cb_R = \{p^k\z:p \text{ prime}\}$ and defined by $R_p = p^k\z$ for some integer $k \geq 2$. Then $\fr$ corresponds to numbers that are not divisible by $p^k$ for any prime, that is, the $k-$free numbers.
	
	Throughout we will identify the space $\{0,1\}^{\co_K}$ with the set of subsets of $\co_K$ (by identifying the indicator function $\one_A$ with the set $A$). This set is a compact topological space equipped with the product topology on countably many copies of the discrete space $\{0,1\}$. Note that this topology is metrizable, and is induced by the metric $d$ such that for $X,Y \subset \co_K$, we have $$d(X,Y) = \min(1,\sup\{\epsilon>0: X\cap B_\epsilon = Y \cap B_\epsilon\}^{-1} ).$$ 
	That is, under this metric, two sets are close if they are identical in a big neighborhood around the origin. We define the shift map $S$ in $\{0,1\}^{\co_K}$ as the action of $\co_K$ on this space given by \begin{equation}
		\label{eq: definition of S}
		S_a(A) = A-a.
	\end{equation}
	
	We will define dynamics on two \textit{shift spaces}\index{Shift Space}, which are closed, $S-$invariant subsets of $\{0,1\}^{\co_K}$. The first space we will consider is the set $\Omega_R$ of $R-$\textit{admissible sets}.
	
	\begin{definition}
		Given a sieve $R$ over an étale $\q-$algebra $K$, we say a set $A \subset \co_K$ is $R-$\textit{admissible}\index{Admissible Set} if for every $\gb \in \cb_R$, $-A+R_\gb \neq \co_K$.
	\end{definition}
	
	Equivalently, we say that $A$ is an $R-$admissible set if for every $\gb \in \cb_R$ there is some $\delta_\gb \in \co_K$ such that  $(A+\delta_\gb)\cap R_\gb = \emptyset$. By taking $\delta_\gb =0$ for every $\gb$, we see that $\fr$ is always $R-$admissible. Additionally, it is clear that if $A$ is $R-$admissible, so is $S_a(A)$ for any $a\in \co_K$. 
	
	\begin{definition}
		\label{def: XR and OmegaR}
		The set $\Omega_R \subset \{0,1\}^{\co_K}$ denotes the set of all $R-$admissible sets\index{Space of Admissible Sets}. We write $X_R$ \index{Orbit Closure ! of $R-$free integers} for the orbit closure of $\fr$ in $\{0,1\}^{\co_K}$, that is, $$X_R = \overline{\{S_a(\fr):a \in \co_K\}}.$$
		
	\end{definition}
	
	Since $X_R$ is a closed subset of a compact space, it is also compact. The same holds for $\Omega_{R}$, as we show in the following lemma.
	
	\begin{lemma}
		$\Omega_R$ is closed.
	\end{lemma}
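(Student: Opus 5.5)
We show that the complement of $\Omega_R$ in $\{0,1\}^{\co_K}$ is open, equivalently that $\Omega_R$ is an intersection of closed sets. Write
\[
\Omega_R=\bigcap_{\gb\in\cbr}\Omega_\gb, \qquad \Omega_\gb:=\{A\subset\co_K:\ -A+R_\gb\neq\co_K\}.
\]
It then suffices to show that each $\Omega_\gb$ is closed, since an arbitrary intersection of closed sets is closed.

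Fix $\gb$. The key point is that the condition $-A+R_\gb\neq\co_K$ only depends on $A$ through its image $A+\gb$ in the finite quotient $\co_K/\gb$. This holds because $R_\gb=S_\gb+\gb$ is a union of cosets of $\gb$, so $-A+R_\gb=-(A+\gb)+S_\gb+\gb$. Fix a finite set of representatives $F\subset\co_K$ of $\co_K/\gb$; this is possible because $N(\gb)<\infty$, as $\gb$ is invertible. Then $A\notin\Omega_\gb$ means that $-(A+\gb)+R_\gb=\co_K$. Since $\co_K/\gb$ is finite, this is witnessed by a finite subset: there exist finitely many $a_1,\dots,a_k\in A$ with $\{-a_1,\dots,-a_k\}+R_\gb=\co_K$.

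Now let $A\notin\Omega_\gb$, choose such witnesses $a_1,\dots,a_k$, and take $M$ with $a_i\in B_M$ for all $i$. The set $U=\{A'\subset\co_K:\ a_1,\dots,a_k\in A'\}$ is a cylinder set, hence open in the product topology. In terms of the metric $d$, it contains every $A'$ agreeing with $A$ on $B_M$. Every $A'\in U$ satisfies
\[
-A'+R_\gb\supseteq\{-a_1,\dots,-a_k\}+R_\gb=\co_K,
\]
so $A'\notin\Omega_\gb$. Hence the complement of $\Omega_\gb$ is open, and $\Omega_\gb$ is closed.

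No step is a serious obstacle. The only point needing care is the finiteness reduction: non-admissibility must be witnessed by finitely many elements of $A$. This uses that $R_\gb$ is $\gb$-periodic and that $\co_K/\gb$ is finite. Equivalently, one can pick for each class $c\in\co_K/\gb$ some $a_c\in A$ with $-a_c\in c-R_\gb$, i.e.\ $-a_c+R_\gb\ni c$ modulo $\gb$, which gives at most $N(\gb)$ witnesses.
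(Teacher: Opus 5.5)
Your proof is correct and is essentially the paper's argument. The paper also shows that the complement of $\Omega_R$ is open: if $-A+R_\gb=\co_K$ for some $\gb\in\cbr$, this is already witnessed by the elements of $A$ in some ball $B_N$, so every set agreeing with $A$ on $B_N$ is non-admissible. Your splitting $\Omega_R=\bigcap_{\gb}\Omega_\gb$ and your use of the $\gb$-periodicity of $R_\gb$ and the finiteness of $\co_K/\gb$ simply make explicit the finiteness step. The paper asserts that step without justification, and writes $+\gb$ where $+R_\gb$ is meant.
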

	
	\begin{proof}
		Assume that $A$ is not an admissible set. By definition, there must be some $\gb \in \cb_R$ such that $-A+R_\gb = \co_K$. Let $N$ be big enough that $(-A\cap B_N)+\gb =  \co_K$. For any admissible set $B$, we necessarily have $(-B\cap B_N) \neq (-A\cap B_N)$, so $d(A,\Omega_R) > 1/N$.
	\end{proof}

	\begin{remark}
		This implies that  $X_R \subset \Omega_R$, given that $\{S_a(\fr):a \in \co_K\} \subset \Omega_{R}$, and so the closure of the orbit must also be contained in $\Omega_{R} $, as this space is closed.
	\end{remark}
	
	Since $X_R$ and $\Omega_{R}$ are both compact, $(\Omega_R,S)$ and $(X_R,S)$ are topological dynamical systems. 
	
	\begin{definition}
		\label{def: GR}
		Given a sieve $R$ over an étale $\q-$algebra $K$, we define its \textit{odometer}\index{Odometer} as the dynamical system ($G_R,T,\mmp$), where $$G_R := \prod_{\gb \in \cb_R} \co_K /\gb,$$ the action $T$ of $\co_K$ on $G_R$ is given by $$T_a((g_\gb)_{\gb \in \cb_R}) = (a+g_\gb)_{\gb \in \cb_R},$$ and $\mmp$ is the Haar measure on $G_R$.
	\end{definition}
	
	The measure $\mmp$ has a simple description. Given a set $A =\{(\gb_1,a_1),\dots,(\gb_m,a_m)\}$ where the $\gb_i \in \cb_R$ are distinct and $a_i \in \co_K$, let $C_A$ be the cylinder set\index{Cylinder Sets} $$C_A = \{g \in \gr: \mathlarger \forall_{1 \leq i \leq m} \hspace{5pt} g_{\gb_i} \equiv a_i \mod \gb_i \}.$$ These sets generate the Borel $\sigma-$algebra of $\gr$, and for any such $A$, $\mmp(C_A)$ is given by $$\mmp(C_A) = \prod_{i=1}^m \frac{1}{N(\gb_i)}.$$
	
	Fixing some ordering of $\cb_R = \{\gb_1, \gb_2, \dots\}$, and for any $g \in \gr$,  writing $g_i := g_{\gb_i}$, we can define a metric on $\gr$ by $$d(g,h):= \sum_i \frac{\rho_i(g,h)}{2^i},$$ where $\rho_i(g,h) = 0 $ if $g_i = h_i$ and $1$ otherwise. We will write $\textbf{0}$ for the identity of $\gr$. For any integer $L \geq 1$ and tuple $(g_1,\dots, g_L)$, the Chinese Remainder Theorem states that there is some $a \in \co_K$, such that $T_a(\textbf{0})_i = g_i$ for any integer $1 \leq i \leq L$.  Therefore, it is clear that $\overline{\{T_a(\textbf{0}):a \in \co_K\}} = \gr$, which means that   ($G_R$,$T$) is a rotation of a group, and so a uniquely ergodic system by  \Cref{thm: minimal rotation of group}.
	
	We now define a measure on $\Omega_R$, known as the Mirsky measure\index{Mirsky Measure} (since this measure will associate to a set $A$ the frequency of the pattern $A$ in $\fr$, which Mirsky first considered in \cite{Mirsky} for the $r-$free numbers). In order to do this, we consider the map $\varphi_R:G_R \rightarrow \Omega_R$, which we will define by the relation \begin{equation}
		a \in \varphi_R(g) \Leftrightarrow \mathlarger \forall_{\gb \in \cbr} \hspace{5pt} a+g_\gb \not \in R_\gb.
	\end{equation}
	Note that the image of $\varphi_R$ does indeed lie in $\Omega_R$,  since by definition $(\varphi_R(g) + g_\gb) \cap R_\gb = \emptyset$ for every $\gb \in \cbr$. The elements of $\varphi_R(\textbf{0})$ are those $a$ such that $a \not \in R_\gb$ for every $\gb \in \cbr$, that is, $$\varphi_R(\textbf{0}) = \fr.$$
	
	\begin{definition}
		\label{def: Mirsky measure}
		Given a sieve $R$, we define the \textit{Mirsky measure} $\nu_R$ on $\Omega_R$ as the pushforward of $\mmp$ in $\Omega_R$ by $\varphi_R$, that is $\nu_R(A) := \mmp(\varphi_R^{-1}(A))$ for any measurable set $A$.
	\end{definition}
	
	This measure is defined on the Borel $\sigma-$algebra of $\Omega_{R}$, which is generated by \textit{cylinder sets}. 
	\begin{definition}
		Given a sieve $R$ and disjoint sets $A,B$, we define the cylinder set $C^R_{A,B}$ as the set $$C^R_{A,B} := \{Y\in \Omega_R: A \subset Y,  Y \cap B = \emptyset\}.$$
	\end{definition}
	
	The topology in $\Omega_R$ is generated by such sets with $A$ and $B$ finite, so they are open. Simultaneously, if $A$ and $B$ are finite, $ C^R_{A,B}$ is closed. Indeed, if $Y_m$ is a sequence of sets in $ C^R_{A,B}$ converging to some $Y\in \Omega_{R}$, and $N$ is so big that $ A \cup B \subset B_N$, then $d(Y,Y_m) < 1/N$ implies that $Y$ and $Y_m$ agree in $B_N$, so $A \subset Y$ and $B \cap Y = \emptyset$, so $Y \in C^R_{A,B}$. That is, these $ C^R_{A,B}$ are clopen (sets that are both open and closed), so it follows that $\one_{C_{A,B}}$ is a continuous function whenever $A$ and $B$ are finite.

	In particular, $C^R_{A,\emptyset}$ corresponds to the $R-$admissible sets that contain $A$. In this case, $\nu_R$ is given by	
	\begin{equation}
		\label{eq:Formula for nu_R with B empty}
		\nu_R(C^R_{A,\emptyset}) = \mmp(\varphi_R^{-1}(C_{A,\emptyset})) = \mmp(\{g \in G_R: \mathlarger \forall_{\gb \in \cbr} \hspace{5pt} (g_\gb + A) \cap R_\gb = \emptyset\})=\prod_{\gb \in \cbr} \left(1-\frac{|-A+R_\gb|}{N(\gb)}\right).
	\end{equation}
	
	Using the following lemma (which is Lemma 2.3 from \cite{Abdalaoui}), it follows that when $A$ and $B$ are finite we have
	\begin{equation}
		\label{eq:Formula for nu_R} 	
		\nu_R(C^R_{A,B}) = \sum_{A \subset D \subset  A\cup B} (-1)^{|D\setminus A|} \prod_{\gb \in \cbr} \left(1-\frac{|-D+R_\gb|}{N(\gb)}\right). 
	\end{equation}
	
	\begin{lemma}
		\label{lm:aux for FR is generic}
		Let $\mu$ be a probability measure is a space $\Omega$, and let $E_x$ be a sequence of measurable sets. Denote $F_x := \Omega\setminus E_x$. Then, given finite disjoint sets $A$ and $B$,  we have $$\mu\left(\bigcap_{a\in A}E_a \cap\bigcap_{b\in B}F_b\right) = \sum_{A \subset D \subset  A\cup B} (-1)^{|D\setminus A|} \mu\left( \bigcap_{d \in D} E_d  \right).  $$
	\end{lemma}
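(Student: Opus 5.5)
This is pure inclusion–exclusion; I would prove it by expanding the indicator function of the intersection and integrating against $\mu$.

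Write $\one_{F_b} = 1 - \one_{E_b}$ for each $b\in B$. Then pointwise on $\Omega$
\[
\prod_{a\in A}\one_{E_a}\prod_{b\in B}\one_{F_b} = \prod_{a\in A}\one_{E_a}\prod_{b\in B}\left(1-\one_{E_b}\right).
\]

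Next, expand the finite product over $B$ as a sum over subsets $C\subset B$:
\[
\prod_{b\in B}\left(1-\one_{E_b}\right) = \sum_{C\subset B}(-1)^{|C|}\prod_{c\in C}\one_{E_c}.
\]
Multiply by $\prod_{a\in A}\one_{E_a}$ and reindex by $D = A\cup C$. Because $A$ and $B$ are disjoint, the map $C\mapsto A\cup C$ is a bijection from subsets of $B$ onto the sets $D$ with $A\subset D\subset A\cup B$. Under this bijection $|C| = |D\setminus A|$, and a product of indicators is the indicator of the intersection. This gives
\[
\one_{\bigcap_{a\in A}E_a\cap\bigcap_{b\in B}F_b} = \sum_{A\subset D\subset A\cup B}(-1)^{|D\setminus A|}\,\one_{\bigcap_{d\in D}E_d}.
\]

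Finally, integrate both sides against $\mu$. The sum is finite, since $B$ is finite and so has $2^{|B|}$ subsets, and every term is a bounded measurable function on a probability space. Linearity of the integral therefore yields the claimed formula.

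For $D=\emptyset$, which occurs only when $A=\emptyset$, the empty intersection is read as $\Omega$, with $\mu(\Omega)=1$. This matches the empty product of indicators being $1$.

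There is no real obstacle here. The only points needing care are the empty-intersection convention and the use of disjointness of $A$ and $B$, which is what makes the reindexing $C\mapsto A\cup C$ bijective.
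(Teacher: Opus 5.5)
Your argument is correct and complete: expanding $\prod_{b\in B}(1-\one_{E_b})$ over subsets of $B$, reindexing through the bijection $C\mapsto A\cup C$ (where the disjointness of $A$ and $B$ is needed), and integrating is the standard inclusion--exclusion proof. The paper does not prove this lemma itself and simply cites Lemma 2.3 of El Abdalaoui, Lema\'nczyk and de la Rue, so your write-up supplies the routine argument behind that citation.
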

	
	We will now study some of the properties of the system $(\Omega_R,S,\nu_R)$. We start with the following result.
	
	\begin{lemma}
		\label{lm: varphi is factor}
		The dynamical system $(\Omega_R,S,\nu_R)$ is a factor of ($G_R$,$T$,$\mmp$). 
	\end{lemma}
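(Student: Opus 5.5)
The plan is to show that $\varphi_R:G_R\to\Omega_R$ is itself a morphism of measure theoretical dynamical systems, defined on all of $G_R$. By the definition of a morphism in Section 2, this needs three things: $\varphi_R$ is measurable, it pushes $\mmp$ forward to $\nu_R$, and it intertwines $T$ with $S$. The pushforward condition holds by the very definition of $\nu_R$ (\Cref{def: Mirsky measure}), once we know $\varphi_R$ is measurable. It will also follow that $\nu_R$ is $S$-invariant, as it must be for $(\Omega_R,S,\nu_R)$ to be a dynamical system at all.

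\textbf{Equivariance.} Fix $a\in\co_K$ and $g\in G_R$. Then $x\in\varphi_R(T_a g)$ if and only if $x+a+g_\gb\notin R_\gb$ for every $\gb\in\cbr$. This in turn holds if and only if $x+a\in\varphi_R(g)$, that is, if and only if $x\in\varphi_R(g)-a=S_a(\varphi_R(g))$. Hence $\varphi_R\circ T_a=S_a\circ\varphi_R$. Invariance of $\nu_R$ then follows formally: for measurable $U$,
\[
\nu_R(S_a^{-1}U)=\mmp(\varphi_R^{-1}S_a^{-1}U)=\mmp(T_a^{-1}\varphi_R^{-1}U)=\mmp(\varphi_R^{-1}U)=\nu_R(U),
\]
where the last step uses that the Haar measure $\mmp$ is invariant under the translations $T_a$.

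\textbf{Measurability.} This is the only step with any content. The Borel $\sigma$-algebra of $\{0,1\}^{\co_K}$, restricted to $\Omega_R$, is generated by the sets $\{Y:x\in Y\}$ for $x\in\co_K$, since the cylinders $C^R_{A,B}$ are finite intersections of these and their complements. So it suffices to check that
\[
\varphi_R^{-1}(\{Y:x\in Y\})=\bigcap_{\gb\in\cbr}\{g\in G_R: g_\gb\notin -x+R_\gb\}
\]
is Borel. Each set in this intersection depends on a single coordinate $g_\gb$ and is clopen, because $R_\gb$ is a union of cosets of $\gb$. The intersection is countable, so the preimage is closed, hence Borel. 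Consequently $\varphi_R$ is Borel measurable.

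We only need measurability, not continuity, and $\varphi_R$ is in general not continuous, since membership of $x$ depends on infinitely many coordinates. Surjectivity onto a full-measure set is automatic from $\nu_R=(\varphi_R)_*\mmp$, as remarked after the definition of a morphism. Taking $X'=G_R$ in that definition, these three facts show that $\varphi_R$ is a factor map.
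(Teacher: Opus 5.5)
Your proposal is correct and follows essentially the same route as the paper: $\varphi_R$ is the factor map, the pushforward condition holds by the definition of $\nu_R$, and equivariance is the same coordinatewise computation $x\in\varphi_R(T_a g)\iff x+a\in\varphi_R(g)$. You also verify Borel measurability of $\varphi_R$ (as a countable intersection of clopen one-coordinate conditions) and the $S$-invariance of $\nu_R$, both of which the paper leaves implicit.
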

	
	\begin{proof}
		We show that $\varphi_R$ is the factor map between these systems. It is a measure preserving map by the definition of $\nu_R$. So we are left with showing that $\varphi_R \circ T = S \circ \varphi_R$. This corresponds to showing that for any $x,a \in \co_K$, $a \in \varphi_R(g+x)$ if and only if $a \in -x + \varphi_R(g)$. But $a \in \varphi_R(g+x)$ is equivalent to $(a+x) + g_\gb \not \in R_\gb$ for all $\gb \in \cbr$, which is equivalent to $(a+x) \in  \varphi_R(g)$, so the result holds.
	\end{proof}

	Since ($G_R$,$T$,$\mmp$) is ergodic, it follows that $(\Omega_R,S,\nu_R)$ also is. Additionally, since ($G_R$,$T$,$\mmp$) is a minimal rotation of a compact group, we have $h(G_R,T,\mmp) = 0$ (see \Cref{def: measures in dynamical system} for the definition of measure theoretical entropy). It follows that \begin{equation}
		\label{eq:entropy is 0}
		h(\Omega_R,S,\nu_R) = 0.
	\end{equation}
	
	\begin{definition}
		\label{def: density}
		Given a set $A \subset \co_K$ and a Følner sequence $I_N$, we define the upper and lower \index{Density}densities with respect to $I_N$ to be respectively,  $$\overline{d}_I(A) := \limsup_{N \rightarrow \infty} \frac{|A \cap I_N|}{|I_N|} \text{ and } \underline{d}_I(A) := \liminf_{N \rightarrow \infty} \frac{|A \cap I_N|}{|I_N|}.$$ If these agree, we write the limit as $d_I(A)$, which we call the density of $A$ with respect to $I_N$.
		
		In the case where $I_N = B_N$ (as defined in \Cref{eq: definition of BN}), we omit the $I$, and write $\overline{d}(A),\underline{d}(A),d(A)$ for the corresponding densities.
	\end{definition}
	
	Since we will be interested in computing the densities of $\fr$ for different sieves $R$, we will use the following lemma which allows us to compute the density of the elements that miss a set of congruence classes.

	\begin{lemma}
		\label{lm: equidistribution for Følner sequences}
		Let $L \geq 1$ be an integer,  $I_N$ a Følner sequence, $\gb_1,\dots,\gb_L$ a collection of pairwise coprime ideals. For each $i$, take $A_i \subset \co_K$ and let $R_i = A_i + \gb_i$. If  $$C_L := \{x \in \co_K: \mathlarger \forall_i \hspace{2pt} x\not \in R_i \},$$ then $$\frac{1}{|I_N|}\sum_{a\in I_N} \one_{C_L}(a) \rightarrow d_I(C_L) = \prod_{i=1}^L\left(1-\frac{|R_i|}{N(\gb_i)}\right),$$ as $N \rightarrow \infty$.
	\end{lemma}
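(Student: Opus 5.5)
The idea is to observe that $C_L$ is periodic modulo the ideal $\gb := \gb_1\cdots\gb_L$, and then use the Følner property to show that every residue class modulo $\gb$ has density $1/N(\gb)$ along $I_N$. The count of residue classes in $C_L$ is then computed with the Chinese Remainder Theorem.

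First, since the $\gb_i$ are pairwise coprime, the Chinese Remainder Theorem gives $\co_K/\gb \cong \prod_{i=1}^L \co_K/\gb_i$. Membership of $x$ in $C_L$ depends only on the tuple $(x+\gb_i)_i$, so $C_L$ is a union of cosets of $\gb$. Under the isomorphism, the set of cosets in $C_L$ corresponds to $\prod_i R_i^c/\gb_i$. Hence $C_L$ is a union of exactly $\prod_i (N(\gb_i)-|R_i|)$ cosets of $\gb$. Since $N(\gb)=\prod_i N(\gb_i)$, the ratio of this number to $N(\gb)$ is $\prod_i(1-|R_i|/N(\gb_i))$.

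It therefore suffices to show that for every coset $c+\gb$,
\[
\frac{|(c+\gb)\cap I_N|}{|I_N|}\to \frac{1}{N(\gb)},
\]
because summing over the finitely many cosets in $C_L$ then gives the claim. Fix a complete set of representatives $x_1,\dots,x_M$ of $\co_K/\gb$, where $M=N(\gb)$; this is finite since $\gb$ is invertible. For each $j$, translation by $x_j-c$ maps $c+\gb$ bijectively onto $x_j+\gb$. This gives
\[
\bigl||(x_j+\gb)\cap I_N| - |(c+\gb)\cap I_N|\bigr| \le |(x_j-c+I_N)\,\Delta\, I_N|,
\]
because $(x_j+\gb)\cap (x_j-c+I_N)$ is the translate of $(c+\gb)\cap I_N$. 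By the Følner condition (\Cref{def: Følner sequence}), the right-hand side is $o(|I_N|)$ for each of the finitely many $j$. Summing over $j$, and using that the cosets partition $\co_K$, gives $|I_N| = M\,|(c+\gb)\cap I_N| + o(|I_N|)$. This is exactly the desired limit.

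The only step needing care is this equidistribution of cosets along an arbitrary Følner sequence. \Cref{lm:Counting under restriction} only covers the balls $B_N$, and it is not needed here. What makes the argument work is that only finitely many shifts $x_j-c$ are involved, so the Følner condition applies uniformly to them. Everything else is bookkeeping with the Chinese Remainder Theorem. Since the limit exists, it equals $d_I(C_L)$, which proves the statement.
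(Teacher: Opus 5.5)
Your proof is correct, and it reaches the result by a more elementary route than the paper.

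The paper also computes the number of residue classes with the Chinese Remainder Theorem. For the equidistribution step, however, it views $\co_K/\gc_L$ as a minimal rotation. It then invokes \Cref{thm: minimal rotation of group} to get unique ergodicity with uniform measure, and \Cref{lm: In uniquely ergodic every point generic} (whose proof rests on Prokhorov's theorem) to conclude that $\textbf{0}$ is generic along every Følner sequence.

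You prove that genericity directly. Your translation estimate
\[
\bigl||(x_j+\gb)\cap I_N| - |(c+\gb)\cap I_N|\bigr| \le |(x_j-c+I_N)\,\Delta\, I_N|
\]
holds, and since only the finitely many shifts $x_j-c$ enter, the Følner condition makes the total error $o(|I_N|)$. This is essentially the hands-on content of unique ergodicity for a finite group rotation.

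What each approach buys:
\begin{itemize}
\item The paper's version is shorter given its dynamical preliminaries. It also sets up the template ``$\textbf{0}$ is generic in a uniquely ergodic rotation'' that is reused for $G_R$ in \Cref{LM: dfr smaller than nu and equivalence} and \Cref{thm:Fr is generic}.
\item Your version is self-contained and makes explicit exactly which instances of the Følner property are used.
\item The clopen sets $C_L\subset G_R$ used in those later proofs depend only on finitely many coordinates. Your argument therefore covers those applications just as well.
\end{itemize}
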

	
	\begin{proof}
		
		Let $\gc_L$ denote the product of all the $\gb_i$, and consider the topological dynamical system $(X,T)$ where $X = \co_K/\gc_L$ and $T_a(x) = x+a$ is an action of $\co_K$ on $X$. Since $(X,T)$ is a minimal rotation of a compact group,  \Cref{thm: minimal rotation of group} implies that it is uniquely ergodic. The unique $T-$invariant probability measure is given by $$\mmp(A) = \frac{|A|}{N(\gc_L)}.$$
		
		Let $\frak{C}_L$ for the image of $C_L$ in $X$. By the Chinese Remainder Theorem, there is a bijection between $\frak{C}_L$ and tuples $(x_1,\dots,x_L)$ with $x_i \in \co_K/\gb_i$ such that $x_i \not \in R_i$ for every $i$. Therefore $|\frak{C}_L| = \prod_i(N(\gb_i)- |R_i|)$, and so $$\mmp(\frak{C}_L) = \frac{|\frak{C}_L|}{N(\gc_L)} = \prod_{i=1}^L\left(1-\frac{|R_i|}{N(\gb_i)}\right).$$ By  \Cref{lm: In uniquely ergodic every point generic} the point $\textbf{0}$ belongs to $\Gen(\mmp,I_N)$ for any Følner sequence $I_N$, that is, $$\frac{1}{|I_N|}\sum_{a\in I_N} \one_{C_L}(a) = \frac{1}{|I_N|}\sum_{a\in I_N} \one_{\frak{C}_L}(T_a(\textbf{0}))  \rightarrow \mmp(\frak{C}_L) =\prod_{i=1}^L\left(1-\frac{|R_i|}{N(\gb_i)}\right)$$ as $N$ goes to infinity.	
	\end{proof}
	
	We want to know when the convergence in  \Cref{lm: equidistribution for Følner sequences} holds as we take $L$ to infinity. We now introduce the notion of weak light tails, which we will show is exactly the condition which determines whether this holds or not. This will be the key ingredient to generalizing point $(1)$ of Sarnak's program to Erdős sieves.
	
	\begin{definition}
		We say a sieve $R$ supported on $\cb_R=\{\gb_1,\gb_2,\dots\}$ has \index{Weak Light Tails}\emph{weak light tails} for the Følner sequence $I_N$ (or, with respect to $I_N$)  if it is Erdős, and
		$$\lim_{L \rightarrow \infty} \overline{d}_I\left(\bigcup_{i >L } R_i \setminus \bigcup_{j \leq L } R_j\right) = 0.$$
	\end{definition}

	Equivalently, we say an Erdős sieves has weak light tails for $I_N$ if  $$ \lim_{L \rightarrow \infty}\limsup_{N \rightarrow \infty}\frac{|\{x \in I_N: x \in R_i \text{ for some } i \text{ with } i > L, x  \not \in R_i \text{ if } i \leq L\}|}{|I_N|} = 0. $$  In \Cref{prop: sieve without density}, we provide an example of a sieve $R$ such that $\fr$ does not have a density with respect to $B_N$. This shows that the limit supremum used on this definition is required as sets of form $\bigcup_{i >L } R_i$ may not have a density.
	
	We say that such sieves have \emph{weak} light tails, because we will also consider another property of sieves, which we denote by \emph{strong light tails}. As the name implies, a sieve with strong light tails will have weak light tails.
	
	\begin{definition}
		We say a sieve $R$ has \index{Strong Light Tails}\emph{strong light tails} for a Følner sequence $I_N$ (or, with respect to $I_N$), if it is Erdős and $$\lim_{L \rightarrow \infty} \overline{d}_I\left(\bigcup_{i >L } R_i\right) = 0.$$
	\end{definition}
	
	Equivalently, we say an Erdős sieve $R$ has strong light tails for $I_N$ if $$ \lim_{L \rightarrow \infty}\limsup_{N \rightarrow \infty}\frac{|\{x \in I_N: x \in R_i \text{ for some } i \text{ with } i > L\}|}{|I_N|} = 0. $$ 
	Clearly strong light tails imply weak light tails, but as we will show in \Cref{ex:Sieve with weak but not strong light tails}, the converse does not hold. Although the definition of these properties assumes that $\cb_R$ was ordered, both properties are invariant under reordering of $\cb_R$, as we will show in \Cref{lm:Light tails invariant under } and \Cref{lm:Strong light tails is invariant under permutation}.
	
	Our use of the term \textit{light tails} to describe such sieves is inspired by the use of the term in Section 1.1 of \cite{Dymek}. In this paper the authors defined a $\cb-$free system with light tails as a set $\{b_1,b_2,\dots\}\subset \mathbb{N}$ such that $$\lim_{L \rightarrow \infty}\overline{d}\left(\bigcup_{i>L}b_i\z\right) = 0.$$ 
	It is clear that all such sets correspond to sieves with strong light tails.
	
	\begin{example}
		\label{ex:Sieves with and without light tails}
		
		For an example of a sieve without weak light tails for any $I_N$, let $R$ be the sieve over $\q$ and supported on $\cb = \{p_i^2: i \in \mathbb{N}\}$, defined by $$R_{i} =  \{-i,i\}+ p^2_i\z.$$ We have for any $x \in \z\setminus\{0\}$ that  $x\in R_{|x|}$, so given $L \geq 1$ we have $$|I_N \cap \bigcup_{i > L} R_i| \geq |I_N|-L.$$ On the other hand, by  \Cref{lm: equidistribution for Følner sequences}, we have that $$|I_N\setminus \bigcup_{i \leq L} R_i | = |I_N|\prod_{i=1}^L \left(1-\frac{2}{p_i^2}\right) +o(|I_N|).$$ Consequently, we have that $$\lim_{N \rightarrow \infty} \frac{|I_N \cap \bigcup_{i > L} R_i\setminus\bigcup_{j \leq L} R_j|}{|I_N|} \geq \lim_{N \rightarrow \infty} \frac{\left(|I_N|\left(1-\prod_{i=1}^L \left(1-\frac{2}{p_i^2}\right)\right) -L -o(|I_N|)\right)}{|I_N|} = 1-\prod_{i=1}^L \left(1-\frac{2}{p_i^2}\right)$$ which is bigger than $0$ for every $L$. Therefore the sieve $R$ does not have weak light tails for any Følner sequence.
		
		On the other hand, let $I_N = [0,N]$, and consider the sieve $R$ given by $$R_i = p_i^2\z,$$ that is, the squarefree sieve. It has strong light tails for $I_N$ as a consequence of  \Cref{lm: light tails for B_n finite}. Note that for every sieve $R$ there is some Følner sequence $I_N$ such that $R$ does not have weak light tails with respect to $I_N$, as we will show in \Cref{rmk: no Erdős sieve has weak light tails for every Følner sequence}.
	\end{example}
	
	We are now in a position to generalize point $(1)$ of Sarnak's program to Erdős sieves, by expliciting when $\fr$ is generic in $(\Omega_{R},S, \nu_R)$. We start by showing two lemmas. The following very simple lemma shows that point $(1)$ of Sarnak's program is a theorem of interest from a number theoretic point of view.
	
	\begin{lemma}
		\label{lm:Sums to cardinality of sets}
		Let $A,B,X$ be subsets of $\co_K$. We have
		$$ \sum_{x\in X}\one_{C^R_{A,B}}(S_x (\fr)) = |\{x\in X: x+A \subset \fr, (x+B) \cap \fr = \emptyset\}|.$$
	\end{lemma}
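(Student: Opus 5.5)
The plan is to check the identity term by term: for each $x\in X$, the summand on the left is $1$ exactly when $x$ belongs to the set on the right, and $0$ otherwise. Summing over $x\in X$ then gives the cardinality on the right. Since the left-hand side is a sum of $0$--$1$ values, this pointwise check is all that is needed, and the statement holds even when $X$ is infinite, both sides possibly being infinite.

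Fix $x\in X$. By the definition of $S$ in \Cref{eq: definition of S}, we have $S_x(\fr)=\fr-x$. Moreover, $S_x(\fr)\in\Omega_R$, because $\fr$ is $R-$admissible and admissibility is preserved by shifts. Membership of $S_x(\fr)$ in $C^R_{A,B}$ therefore reduces to two conditions: $A\subset \fr-x$ and $(\fr-x)\cap B=\emptyset$.

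The first condition is equivalent to $x+A\subset\fr$. For the second, an element $b\in B$ lies in $\fr-x$ exactly when $x+b\in\fr$. So $(\fr-x)\cap B=\emptyset$ is equivalent to $(x+B)\cap\fr=\emptyset$. Hence $\one_{C^R_{A,B}}(S_x(\fr))=1$ if and only if $x$ lies in the set on the right-hand side.

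There is no real obstacle here. The only point needing care is the sign convention $S_a(A)=A-a$, which is what makes the condition read as $x+A\subset\fr$ rather than $-x+A\subset\fr$.
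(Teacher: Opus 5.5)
Your proof is correct and takes the same approach as the paper. Both arguments check each summand, using that $S_x(\fr)=\fr-x$ is always $R$-admissible, so membership in $C^R_{A,B}$ reduces to $x+A\subset\fr$ and $(x+B)\cap\fr=\emptyset$.
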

	
	\begin{proof}
		If $\one_{C^R_{A,B}}(S_x (\fr))$ is $1$, then  $-x+\fr$ is an admissible set containing $A$ and disjoint from $B$, which is equivalent to $x+A \subset \fr$ and $x+B \cap \fr = \emptyset$, since $S_x(\fr)$ is always admissible.
	\end{proof}
	
	This allows us to relate the weak light tails of $R$ and $d_I(\fr)$, which we  will use to show our generalization of point $(1)$ of Sarnak's program.
	
	\begin{lemma}
		\label{LM: dfr smaller than nu and equivalence}
		Let $R$ be a sieve and $I_N$ a Følner sequence. Then $$\bar{d}_I(\fr) \leq \nu_R(C^R_{\{0\},\emptyset}) =  \prod_{\gb \in \cbr}  \left(1-\frac{|R_\gb|}{N(\gb)}\right).$$ For an Erdős sieve $R$, $d_I(\fr)$ exists and equals $\nu_R(C^R_{\{0\},\emptyset})$ if and only if $R$ has weak light tails for $I_N$.
	\end{lemma}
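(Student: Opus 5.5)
The plan is to compare $\fr$ with its finite truncations. For each $L \geq 1$ we set $C_L := \co_K \setminus \bigcup_{i \leq L} R_i$, so that $\fr \subset C_L$ for every $L$, and more precisely
$$C_L \setminus \fr = \bigcup_{i>L} R_i \setminus \bigcup_{j \leq L} R_j .$$
By \Cref{lm: equidistribution for Følner sequences}, $C_L$ has density $d_I(C_L) = \prod_{i=1}^L (1-|R_i|/N(\gb_i))$ with respect to $I_N$. The identity $\nu_R(C^R_{\{0\},\emptyset}) = \prod_{\gb}(1-|R_\gb|/N(\gb))$ is just \Cref{eq:Formula for nu_R with B empty} with $A = \{0\}$, since $|-\{0\}+R_\gb| = |R_\gb|$. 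We denote this infinite product by $P$. It is the limit of the partial products $P_L := d_I(C_L)$, which decrease in $L$ and are bounded below by $0$.

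For the upper bound, the inclusion $\fr \subset C_L$ gives $\overline{d}_I(\fr) \leq d_I(C_L) = P_L$ for every $L$. Letting $L \to \infty$ yields $\overline{d}_I(\fr) \leq P$. This part needs no Erdős hypothesis, and it holds trivially if $P=0$.

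For the equivalence, we write $T_L := \bigcup_{i>L} R_i \setminus \bigcup_{j\leq L} R_j$. Since $C_L$ is the disjoint union of $\fr$ and $T_L$, and $C_L$ has a genuine density, we obtain for each fixed $L$
$$\underline{d}_I(\fr) = P_L - \overline{d}_I(T_L), \qquad \overline{d}_I(\fr) = P_L - \underline{d}_I(T_L).$$
The first identity uses that the liminf of $|C_L\cap I_N|/|I_N| - |T_L\cap I_N|/|I_N|$ equals $P_L$ minus the limsup of the second term, because the first term converges.

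Suppose $R$ has weak light tails. Then $\underline{d}_I(\fr) = P_L - \overline{d}_I(T_L) \to P - 0 = P$ as $L\to\infty$. Combined with $\overline{d}_I(\fr) \leq P$, this shows that $d_I(\fr)$ exists and equals $P$.

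Conversely, suppose $d_I(\fr) = P$. Then $\overline{d}_I(T_L) = P_L - \underline{d}_I(\fr) = P_L - P$, which tends to $0$ as $L\to\infty$ because $P_L \to P$. Hence $R$ has weak light tails.

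The Erdős hypothesis enters only to make weak light tails meaningful, since that definition presupposes it. By \Cref{lm: finite sum finite products} it also guarantees $P>0$, although the argument above does not really need positivity. The only point requiring care is the liminf/limsup bookkeeping for the difference of a convergent sequence and a bounded one; that step is routine, and there is no substantial obstacle.
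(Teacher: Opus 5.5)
Your proof is correct and follows essentially the same route as the paper. The paper truncates via the clopen sets $C_L=\{g\in G_R: g_i\notin R_i,\ i\le L\}$ in the odometer and evaluates them along the orbit of $\mathbf{0}$ through the factor map $\varphi_R$; these are exactly your sets $\co_K\setminus\bigcup_{i\le L}R_i$ pulled back by $a\mapsto T_a(\mathbf{0})$. It then uses the same decomposition $C_L=\fr\sqcup\bigl(\bigcup_{i>L}R_i\setminus\bigcup_{j\le L}R_j\bigr)$ and the same liminf/limsup bookkeeping, so working directly in $\co_K$ with the equidistribution lemma is just a leaner presentation of the identical argument.
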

	
	\begin{proof}
		
		Notice that $a \in \fr$ is equivalent to $a+\{0\} \subset \fr$. Therefore, by  \Cref{lm:Sums to cardinality of sets}, we have  $$ \frac{|I_N \cap \fr|}{|I_N|} =  \frac{|\{a \in I_N: a + \{0\} \subset \fr\}|}{|I_N|} =\frac{1}{|I_N|}\sum_{a \in I_N} \one_{C^R_{\{0\},\emptyset}}(S_a(\fr)).$$
		Since $\varphi_R(\textbf{0}) = \fr$ and $\varphi_R$ is a factor map (by  \Cref{lm: varphi is factor}), we have that $$S_a(\fr) = S_a(\varphi_R(\textbf{0})) = \varphi_R(T_a(\textbf{0})),$$ and so $S_a(\fr) \in  C^R_{\{0\},\emptyset}$ is equivalent to $T_a(\textbf{0}) \in \varphi_R^{-1}(C^R_{\{0\},\emptyset})$, hence we get
		
		\begin{equation}
			\label{eq: density and sums with T}
			\frac{|I_N \cap \fr|}{|I_N|}  = \frac{1}{|I_N|}\sum_{a \in I_N} \one_{\varphi_{R}^{-1}(C^R_{\{0\},\emptyset})}(T_a(\textbf{0})).
		\end{equation}

		We would like to evaluate the sum in the right hand side as $N$ goes to infinity using  \Cref{lm: In uniquely ergodic every point generic}, given that $(G_R,T)$ is a uniquely ergodic system. The problem is that $\varphi_{R}$ is not continuous, so $\one_{\varphi_{R}^{-1}(C^R_{\{0\},\emptyset})}$ might not be continuous. We must therefore approach this set through clopen sets.  Let us order the support $\cbr = \{\gb_1,\gb_2,\dots\}$. We have that $$\varphi_{R}^{-1}(C^R_{\{0\},\emptyset}) = \{g \in G_R: g_i \not \in R_i \text{ for all } i  \}.$$
		
		By defining  $$C_L := \{g \in G_R: g_i \not \in R_i \text{ for all } i \leq L\},$$ we have that $C_L$ is a clopen subset of $G_R$, so $\one_{C_L}$ is a continuous function on $G_R$. By the same argument used in  \Cref{lm: equidistribution for Følner sequences} (simply choosing $X$ to be $G_R$ instead), we have $$ \lim_{N\rightarrow\infty}\frac{1}{|I_N|}\sum_{a \in I_N}\one_{C_L}(T_a(\textbf{0})) = \mmp(C_L) = \prod_{i =1 }^L  \left(1-\frac{|R_i|}{N(\gb_i)}\right). $$ 
		
		Note that $\varphi_{R}^{-1}(C^R_{\{0\},\emptyset}) \subset C_L$ for every $L$. We approach the set $\varphi_{R}^{-1}(C^R_{\{0\},\emptyset}) $ through the sets $C_L$, by noticing that for any integer $L \geq 1$ we have the disjoint union $$C_L =  \varphi_{R}^{-1}(C^R_{\{0\},\emptyset}) \cup (C_L \setminus \varphi_{R}^{-1}(C^R_{\{0\},\emptyset})).$$  Using this, along with \Cref{eq: density and sums with T}, gives
		
		\begin{equation}
			\label{eq: big sum}
			\frac{1}{|I_N|}\sum_{a \in I_N} \one_{C_L}(T_a(\textbf{0})) = \frac{|\fr \cap I_N|}{|I_N|} + \frac{1}{|I_N|}\sum_{a \in I_N} \one_{C_L \setminus \varphi_{R}^{-1}(C^R_{\{0\},\emptyset})}(T_a(\textbf{0})).
		\end{equation}

		It follows that $$\frac{|\fr \cap I_N|}{|I_N|} \leq \frac{1}{|I_N|}\sum_{a \in I_N} \one_{C_L}(T_a(\textbf{0}))$$ holds for every $N$. Taking the limit supremum on $N$ gives  $$\bar{d}_I(\fr) \leq \prod_{i =1}^L  \left(1-\frac{|R_i|}{N(\gb_i)}\right). $$  Since this holds for every $L$, we can take the limit as $L$ goes to infinity, which gives  $$\bar{d}_I(\fr) \leq \nu_R(C^R_{\{0\},\emptyset})$$ as we wanted.
		
		Returning to \Cref{eq: big sum}, notice that $T_a(\textbf{0}) \in C_L \setminus \varphi_{R}^{-1}(C^R_{\{0\},\emptyset})$ is equivalent to $a \not \in \fr$ and $a \not \in R_j$ for all $j \leq L$. That is, $\one_{C_L \setminus \varphi_{R}^{-1}(C^R_{\{0\},\emptyset})}(T_a(\textbf{0})) = 1$ if and only if $a \in \bigcup_{i> L}R_i \setminus \bigcup_{j \leq L}R_j$, and so $$\frac{1}{|I_N|}\sum_{a \in I_N} \one_{C_L \setminus \varphi_{R}^{-1}(C^R_{\{0\},\emptyset})}(T_a(\textbf{0})) = \frac{\left|I_N \cap \bigcup_{i> L}R_i \setminus \bigcup_{j \leq L}R_j\right|}{|I_N|}.$$
		
		Hence, if on \Cref{eq: big sum} we take the limit supremum on $N$, we obtain $$ \prod_{i =1}^L  \left(1-\frac{|R_i|}{N(\gb_i)}\right) \leq \bar{d}_I(\fr) + \overline{d}_I\left(\bigcup_{i> L}R_i \setminus \bigcup_{j \leq L}R_j\right). $$ Taking the limit as $L$ goes to infinity, we get that
		\begin{equation}
			\label{eq: limsup}
			0 \leq \nu_R(C^R_{\{0\},\emptyset}) - \bar{d}_I(\fr) \leq \lim_{L \rightarrow \infty} \overline{d}_I\left(\bigcup_{i> L}R_i \setminus \bigcup_{j \leq L}R_j\right).
		\end{equation}

		On the other hand, rewriting \Cref{eq: big sum} as $$\frac{|\fr \cap I_N|}{|I_N|}  =  \frac{1}{|I_N|}\sum_{a \in I_N} \one_{C_L}(T_a(\textbf{0})) - \frac{\left|I_N \cap \bigcup_{i> L}R_i \setminus \bigcup_{j \leq L}R_j\right|}{|I_N|}$$ and taking the limit infimum on $N$ we get $$\underline{d}_I(\fr) \geq \prod_{i =1}^L  \left(1-\frac{|R_i|}{N(\gb_i)}\right) - \overline{d}_I\left(\bigcup_{i> L}R_i \setminus \bigcup_{j \leq L}R_j\right).$$ Taking the limit as $L$ goes to infinity, this gives
		\begin{equation}
			\label{eq: liminf}
			\nu_R(C^R_{\{0\},\emptyset}) \geq \underline{d}_I(\fr) \geq \nu_R(C^R_{\{0\},\emptyset}) - \lim_{L \rightarrow \infty } \overline{d}_I\left(\bigcup_{i> L}R_i \setminus \bigcup_{j \leq L}R_j\right).
		\end{equation}
		
		Hence, if $R$ has weak light tails for $I_N$, \Cref{eq: limsup} gives $\nu_R(C^R_{\{0\},\emptyset}) =   \bar{d}_I(\fr)$ and \Cref{eq: liminf} gives $\nu_R(C^R_{\{0\},\emptyset}) =   \underline{d}_I(\fr)$, so $d_I(\fr)$ exists and equals $\nu_R(C^R_{\{0\},\emptyset})$. On the other hand, if $d_I(\fr)$ exists and equals $\nu_R(C^R_{\{0\},\emptyset})$, then rewriting \Cref{eq: big sum} as $$\frac{\left|I_N \cap \bigcup_{i> L}R_i \setminus \bigcup_{j \leq L}R_j\right|}{|I_N|} = \frac{1}{|I_N|}\sum_{a \in I_N} \one_{C_L}(T_a(\textbf{0})) - \frac{|\fr \cap I_N|}{|I_N|},$$ we have that the right hand side is the sum of two sequences in $N$ with a well defined limit, so we can take the limit as $N$ goes to infinity to get $$d_I\left(\bigcup_{i> L}R_i \setminus \bigcup_{j \leq L}R_j\right) = \prod_{i =1}^L  \left(1-\frac{|R_i|}{N(\gb_i)}\right) - \nu_R(C^R_{\{0\},\emptyset}). $$ Taking $L$ to go to infinity, the right hand side goes to $0$, so $R$ will have weak light tails with respect to $I_N$.
	\end{proof}
	
	\begin{remark}
		It may happen that a sieve is such that $d_I(\fr)$ is well defined, but is different from $\nu_R(C^R_{\{0\},\emptyset})$, in which case it does not have weak light tails by  \Cref{LM: dfr smaller than nu and equivalence} (for an example look at the sieve $W'$ in \Cref{ex:Sieve with weak but not strong light tails}). It can also happen that a sieve is such that the upper density $\bar{d}_I(\fr)$ is equal to $\nu_R(C^R_{\{0\},\emptyset})$, but this still does not imply that $R$ has weak light tails (see \Cref{ex: sieve with correct upper density but no density}).
	\end{remark}
	
	Notice that in	 the proof of  \Cref{LM: dfr smaller than nu and equivalence}, we never use that $R$ is Erdős. By  \Cref{lm: finite sum finite products}, $R$ being Erdős is equivalent to $\nu_R(C^R_{\{0\},\emptyset}) >0$. Hence, if $R$ is not Erdős, we have $d_I(\fr) = 0$ for every Følner sequence $I_N$. This shows that if we didn't restrict our definition of sieve with weak light tails to Erdős sieves, we would have that every sieve $R$ such that $\sum_{\gb \in \cb_R} \vol(R_\gb) = \infty$ would have weak light tails with respect to every Følner sequence $I_N$. When $R$ is such a sieve, we have that $\nu_R = \delta_{\emptyset}$, so  $(\Omega_{R},S,\nu_R)$ is not an interesting system. For this reason, we restrict our study of sieves with weak light tails to only Erdős sieves.

	We are now in a position to show the following theorem, which is our generalization of point (1) of Sarnak's program. For the proof that $(2)$ implies $(1)$, we use ideas similar to those in the proof of Theorem 4.1 in \cite{Abdalaoui} and Theorem A (i) in \cite{Bartnicka}.
	
	\begin{theorem}
		\label{thm:Fr is generic}
		Let $R$ be an Erdős sieve. For a given Følner sequence $I_N$, the following are equivalent.
		\begin{enumerate}
			\item $\fr$ is a generic point of $(\Omega_{R},S,\nu_R)$ with respect to $I_N$.
			\item $R$ has weak light tails with respect to $I_N$.
			\item The set $\fr$ has a density with respect to $I_N$ given by $$d_I(\fr) = \nu_R(C^R_{\{0\},\emptyset}) =  \prod_{\gb \in \cbr}  \left(1-\frac{|R_\gb|}{N(\gb)}\right). $$
		\end{enumerate}
		
	\end{theorem}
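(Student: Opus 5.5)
The equivalence $(2)\Leftrightarrow(3)$ is exactly the second assertion of \Cref{LM: dfr smaller than nu and equivalence}, so nothing new is needed there. For $(1)\Rightarrow(3)$, the function $\one_{C^R_{\{0\},\emptyset}}$ is continuous because the cylinder is clopen. Genericity of $\fr$ then gives
$$\frac{1}{|I_N|}\sum_{a\in I_N}\one_{C^R_{\{0\},\emptyset}}(S_a(\fr)) \to \nu_R(C^R_{\{0\},\emptyset}).$$
By \Cref{lm:Sums to cardinality of sets}, the left-hand side equals $|I_N\cap\fr|/|I_N|$, and the value of $\nu_R(C^R_{\{0\},\emptyset})$ is given by \Cref{eq:Formula for nu_R with B empty}. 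This proves (3). It remains to show $(2)\Rightarrow(1)$.

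For $(2)\Rightarrow(1)$, the plan is as follows. The cylinder sets $C^R_{A,B}$ with $A,B$ finite are clopen and generate the topology. Their indicators span an algebra that separates points and contains constants, so by Stone--Weierstrass it is dense in $C(\Omega_R)$. It therefore suffices to check the ergodic averages on each $\one_{C^R_{A,B}}$. Moreover, the inclusion--exclusion identity of \Cref{lm:aux for FR is generic} applies both to $\nu_R$, via \Cref{eq:Formula for nu_R}, and to the empirical measures $\frac{1}{|I_N|}\sum_{a\in I_N}\delta_{S_a(\fr)}$, since these are probability measures on $\Omega_R$. This reduces the problem to the case $B=\emptyset$: for every finite $A\subset\co_K$ I will show
$$\lim_{N\to\infty}\frac{|\{a\in I_N: a+A\subset\fr\}|}{|I_N|}=\prod_{\gb\in\cbr}\left(1-\frac{|-A+R_\gb|}{N(\gb)}\right).$$

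To prove this, I define an auxiliary sieve $R^A$ on the same support by $R^A_\gb:=-A+R_\gb$. Then $x+A\subset \fr$ exactly when $x\in\mathcal{F}_{R^A}$. Applying \Cref{LM: dfr smaller than nu and equivalence} to $R^A$, the displayed limit holds precisely when $R^A$ has weak light tails. The claim $R^A_\gb\neq\co_K$ may fail for finitely many $\gb$. In that case the product on the right is $0$ and the left-hand side is $0$ as well, so this degenerate case is trivial. Otherwise $R^A$ is a genuine sieve, and it is Erdős because $|-A+R_\gb|\le |A|\,|R_\gb|$.

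The main obstacle is showing that weak light tails pass from $R$ to $R^A$. I expect to use the inclusion
$$\bigcup_{i>L}R^A_i\setminus\bigcup_{j\le L}R^A_j\subset\bigcup_{\alpha\in A}\Big(-\alpha+\Big(\bigcup_{i>L}R_i\setminus\bigcup_{j\le L}R_j\Big)\Big)\ \cup\ E_L.$$
Here $E_L$ collects the points $x$ for which some $x+\alpha$ lies in $\bigcup_{i>L}R_i$ while each $x+\alpha'$ avoids $\bigcup_{j\le L}R_j$, but some $x+\alpha''$ lies in $\bigcup_{j\le L}R_j$ for a different $\alpha''$. The translated pieces are harmless: the Følner property makes upper density translation invariant, so each has upper density tending to $0$ by (2).

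The set $E_L$ is the delicate part, and this is where I would put the most care. My first attempt is to compare densities directly. By \Cref{lm: equidistribution for Følner sequences}, the density of $x$ with all $x+\alpha\notin\bigcup_{j\le L}R_j$ is $\prod_{j\le L}(1-\vol(-A+R_j))$. By \Cref{LM: dfr smaller than nu and equivalence}, the upper density of $\mathcal{F}_{R^A}$ is at most the full product. The complementary lower bound comes from a union bound: $x\in\mathcal{F}_{R^A}$ fails only if $x+\alpha\notin\fr$ for some $\alpha$. If I use the weak tails of $R$ for each shift, together with finite-level equidistribution for $R^A$ and a tail estimate $\sum_{j>L}\vol(-A+R_j)\to0$, this should yield $\underline d_I(\mathcal{F}_{R^A})\ge \prod_{\gb}(1-\vol(-A+R_\gb))$. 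That would give (3) for $R^A$ and hence its weak light tails. In this step I must control the cross terms where an earlier $R_j$ catches $x+\alpha$ but a later $R_i$ catches $x+\alpha'$, and I expect those to be bounded by the tail sum $\sum_{j>L}\vol(-A+R_j)$, again via \Cref{lm: equidistribution for Følner sequences}.
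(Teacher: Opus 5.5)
Your reductions are the paper's argument. These are $(2)\Leftrightarrow(3)$ via \Cref{LM: dfr smaller than nu and equivalence}, $(1)\Rightarrow(3)$ via the clopen cylinder, and Stone--Weierstrass plus inclusion--exclusion down to $C^R_{A,\emptyset}$. The only difference is packaging. You pass to the auxiliary sieve $R^A_\gb=-A+R_\gb$ and invoke \Cref{LM: dfr smaller than nu and equivalence}, which forces the separate treatment of $-A+R_\gb=\co_K$ that you do correctly. The paper instead re-runs the clopen approximation $C_L\subset G_R$. Both land on the same statement,
$$\lim_{L\to\infty}\overline{d}_I\Bigl(\bigcup_{i>L}(-A+R_i)\setminus\bigcup_{j\le L}(-A+R_j)\Bigr)=0,$$
and the paper uses your auxiliary sieve itself later in \Cref{thm: density iff light tails}.

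Where you go wrong is in identifying the ``main obstacle'': the set $E_L$ is empty. Take $x\notin\bigcup_{j\le L}R^A_j$. Then $x+\alpha'\notin\bigcup_{j\le L}R_j$ for every $\alpha'\in A$. So the same $\alpha$ with $x+\alpha\in\bigcup_{i>L}R_i$ already places $x$ in $-\alpha+\bigl(\bigcup_{i>L}R_i\setminus\bigcup_{j\le L}R_j\bigr)$. Your own definition of $E_L$ asks both that every $x+\alpha'$ avoid $\bigcup_{j\le L}R_j$ and that some $x+\alpha''$ lie in it, which is contradictory. The paper records this as: removing the larger set $\bigcup_{j\le L}(-A+R_j)$ rather than $\bigcup_{j\le L}(-a+R_j)$ only shrinks the set. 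After that, the F{\o}lner translation-invariance step you already gave finishes the proof.

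So delete your last paragraph. It is unnecessary, and as sketched it would not work anyway. Bounding a union over infinitely many $j>L$ by $\sum_{j>L}\vol(-A+R_j)$ is exactly what fails without light tails; see \Cref{ex:Sieves with and without light tails}, and note that \Cref{lm: equidistribution for Følner sequences} only handles finitely many ideals. Moreover, your target inequality $\underline{d}_I(\mathcal{F}_{R^A})\ge\prod_{\gb}(1-\vol(-A+R_\gb))$ is, by \Cref{LM: dfr smaller than nu and equivalence}, just a restatement of the weak light tails of $R^A$ that you are trying to prove.
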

	
	\begin{proof}
		
		We have shown that $(2)$ and $(3)$ are equivalent in  \Cref{LM: dfr smaller than nu and equivalence}. To show that $(1)$ implies $(3)$, notice that $C^R_{\{0\},\emptyset}$ is clopen in $\Omega_R$. Hence, if $\fr$ is generic for $I_N$, then by \Cref{lm:Sums to cardinality of sets} we have $$d_I(\fr) = \lim_{N \rightarrow \infty} \frac{1}{|I_N|}\sum_{x\in I_N} \one_{C^R_{\{0\},\emptyset}}(S_x(\fr)) = \nu_R(C^R_{\{0\},\emptyset}), $$ so $(3)$ holds. 
		
		It remains to show that $(2)$ implies $(1)$. First, notice that when $A,B$ are finite, $\one_{C^R_{A,B}}$ is a continuous function. Since $\one_{C^R_{A,B}}\one_{C^R_{X,Y}} = \one_{C^R_{A\cap X,B \cup Y}}$, the functions $\one_{C^R_{A,B}}$ generate a subalgebra of the space of continuous functions over $\Omega_R$. Clearly, this algebra separates points, since if $X \neq Y$, taking $x \in X\setminus Y$, we have $\one_{C^R_{\{x\},\emptyset}}(X) = 1$ but $\one_{C^R_{\{x\},\emptyset}}(Y) = 0$. By the Stone-Weierstrass theorem, it follows that the functions of the form $\one_{C^R_{A,B}}$ span a dense subalgebra of the set of continuous functions over $\Omega_R$. 
		
		Consequently we only need to show the result for functions of this type. In fact, using  \Cref{lm:aux for FR is generic} we see that we only have to do it for functions of the form $\one_{C^R_{A,\emptyset}}$. Arguing as in the proof of  \Cref{LM: dfr smaller than nu and equivalence} we have $\one_{C^R_{A,\emptyset}}(S_x(\fr)) = \one_{\varphi_R^{-1}(C^R_{A,\emptyset})}(T_x(\textbf{0}))$, and so we reduce the problem to showing that $$ \frac{1}{|I_N|}\sum_{x \in I_N} \one_{\varphi_R^{-1}(C^R_{A,\emptyset})}(T_x(\textbf{0})) \rightarrow  \nu_R(C^R_{A,\emptyset}) = \mmp(\varphi_R^{-1}(C^R_{A,\emptyset}))$$ as $N$ goes to infinity.
		
		The idea is again to use the fact that every point in $G_R$ is generic for the system ($G_R$,$T$,$\mmp$), as this is a minimal rotation of a compact group. 
		Yet, since $\varphi_R$ is not continuous, we must approach the set $\varphi^{-1}(C^R_{A,\emptyset})$ by clopen sets to use this fact.

		To do this, we look at the set $\varphi^{-1}(C^R_{A,\emptyset})$. If we order $\cbr$, we see that $\varphi^{-1}(C^R_{A,\emptyset})$ corresponds to those $g \in \gr$ such that $g_i+a \not \in R_i$ for every $i \in \mathbb{N}$ and $a \in A$, that is,  $$\varphi^{-1}(C^R_{A,\emptyset}) = \{g \in \gr: g_i \not \in -A + R_i \text{ for all $i$} \}.$$ Let $$C_L := \{g\in G: g_i \not \in -A + R_i \text{ for all $i$ such that } i \leq L \}.$$ It is clear that $\varphi_R^{-1}(C^R_{A,\emptyset}) \subset C_L$ for every $L$, and therefore $$C_L = \varphi_R^{-1}(C^R_{A,\emptyset}) \cup (C_L \setminus \varphi_R^{-1}(C^R_{A,\emptyset})).$$ 
		
		We now proceed as in  \Cref{LM: dfr smaller than nu and equivalence}. We have that the $C_L$ are clopen, so using  \Cref{lm: In uniquely ergodic every point generic} we know that $\textbf{0}$ is generic in $(G_R,T,\mmp)$ with respect to $I_N$, and so $$\lim_{N \rightarrow \infty} \frac{1}{|I_N|}\sum_{a \in I_N} \one_{C_L}(T_a(\textbf{0})) = \mmp(C_L) = \prod_{i=1}^L\left(1- \frac{|-A+R_i|}{N(\gb_i)}\right).$$  Therefore, by \Cref{eq:Formula for nu_R with B empty} we have  $$\lim_{L \rightarrow \infty} \lim_{N \rightarrow \infty} \frac{1}{|I_N|}\sum_{x \in I_N} \one_{C_L}(T_x(\textbf{0})) = \nu_R(C^R_{A,\emptyset}).$$ Since $$ \frac{1}{|I_N|}\sum_{x \in I_N} \one_{\varphi_R^{-1}(C^R_{A,\emptyset})}(T_x(\textbf{0})) =  \frac{1}{|I_N|}\sum_{x \in I_N} \one_{C_L}(T_x(\textbf{0})) -  \frac{1}{|I_N|}\sum_{x \in I_N} \one_{C_L \setminus \varphi_R^{-1}(C^R_{A,\emptyset})}(T_x(\textbf{0})), $$ the result will follow if we can show that $$\lim_{L \rightarrow \infty} \limsup_{N \rightarrow \infty} \frac{1}{|I_N|}\sum_{x \in I_N} \one_{C_L \setminus \varphi_R^{-1}(C^R_{A,\emptyset})}(T_x(\textbf{0})) = 0,$$
		which corresponds to showing that $$\lim_{L \rightarrow \infty}  \overline{d}_I\left(\bigcup_{i>L}(-A+R_i)\setminus\bigcup_{i \leq L}(-A+R_i)\right)= 0. $$ This is because $T_x(0) \not  \in \varphi_R^{-1}(C^R_{A,\emptyset})$ occurs if and only if there is some $i$ such that $x \in -A+R_i$, and $T_x(0) \in C_L$ is equivalent to $x \not \in -A+R_i$ for every $i \leq L$.

		We now notice that \[
		\begin{aligned}
			\left|I_N \cap \bigcup_{i>L}(-A+R_i)\setminus\bigcup_{i \leq L}(-A+R_i)\right|
			&\leq
			\sum_{a \in A}\left|I_N \cap \bigcup_{i>L}(-a+R_i)\setminus\bigcup_{i \leq L}(-A+R_i)\right| \\
			&\leq
			\sum_{a \in A}\left|I_N \cap \bigcup_{i>L}(-a+R_i)\setminus\bigcup_{i \leq L}(-a+R_i)\right| \\
			&=
			\sum_{a \in A}\left|(I_N+a) \cap \bigcup_{i>L}R_i\setminus\bigcup_{i \leq L}R_i\right|.
		\end{aligned}
		\] 
		
		Using the fact that for any three sets $X,Y,Z$ we have $|X \cap Y| \leq |X \cap Z|+ |Y \Delta Z|$, we get (taking $X = \bigcup_{i>L}R_i\setminus\bigcup_{i \leq L}R_i$, $Y = (I_N+a)$ and $Z = I_N$) that  $$\left|(I_N+a) \cap \bigcup_{i>L}R_i\setminus\bigcup_{i \leq L}R_i\right| \leq  \left|I_N \cap \bigcup_{i>L}R_i\setminus\bigcup_{i \leq L}R_i\right| + |(I_N+a)\Delta I_N|,$$ and so all we need to show is that $$	\lim_{L \rightarrow \infty} \limsup_{N \rightarrow \infty} \frac{|A|}{|I_N|}\left|I_N \cap \bigcup_{i>L}R_i\setminus\bigcup_{i \leq L}R_i\right| + \sum_{a \in A}\frac{|(I_N+a)\Delta I_N|}{|I_N|}  = 0.$$  The first term will go to $0$ since $R$ has weak light tails, while the second will go to $0$ since $I_N$ is a Følner sequence.		
	
	\end{proof}

	\subsection{Density of $R-$free numbers}
	
	The objective of this subsection is to prove a number of facts about the densities of sets of the form $\fr$. Most of the results in this section don't have a good analogue over number fields, so everything is done over $\mathbb{N}$. In particular, we will use the notation $d(A)$ for the density along the Følner sequence $I_N = [1,N]$. 
	
	We now define \index{Logarithmic Density}logarithmic density.
	
	\begin{definition}
		Given a set $A \subset \mathbb{N}$, we define the logarithmic density of $\delta(A)$ as the quantity $$\delta(A) = \lim_{N \rightarrow \infty}\frac{1}{\log(N)} \sum_{m \in A\cap [1,N]} \frac{1}{m}.$$ The upper and lower densities $\overline{\delta}$ and $\underline{\delta}$ are then defined by taking the limsup and liminf respectively. 
	\end{definition}
	
	It is well known that for any $A\subset \mathbb{N}$, the following inequalities always hold:
	$$\underline{d}(A) \leq \underline{\delta}(A) \leq \overline{\delta}(A) \leq \overline{d}(A).$$ In particular, if $d(A)$ exists, then $\delta(A)$ exists and equals $d(A)$.
	
	A powerful tool when working with $\cb-$free systems, where $\cb$  is a set of possibly non-pairwise coprime integers, is the Davenport–Erdős Theorem\index{Davenport–Erdős Theorem} (see for example the proof of Theorem 4.1 in \cite{Dymek} for a use of the theorem in this context). Given a set $\cb = \{b_1,b_2,b_3,\dots\}$, let $$\cf_{b_1,\dots,b_k} = \left(\bigcup_{i=1}^k b_i\z\right)^c.$$ We will also use the notation $\cf_\cb$ for the set obtained by sieving by every class $b_i\z$. The Davenport–Erdős theorem states the following (see \cite{Davenport}).
	
	\begin{theorem}
		Let $\cb = \{b_1,b_2,b_3,\dots\}$ be a collection of distinct integers. Then, $\delta(\cf_\cb)$ exists and we have $$\delta(\cf_\cb) = \overline{d}(\cf_\cb) = \lim_{k \rightarrow \infty}d(\cf_{b_1,\dots,b_k}).$$
	\end{theorem}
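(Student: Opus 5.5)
The plan is to pass to complements. Write $\mathcal{M}_\cb := \bigcup_i b_i\z \cap \mathbb{N}$ and $\mathcal{M}_k := \bigcup_{i\le k} b_i\z\cap\mathbb{N}$, so that $\cf_\cb = \mathbb{N}\setminus \mathcal{M}_\cb$ and $\cf_{b_1,\dots,b_k}=\mathbb{N}\setminus\mathcal{M}_k$. Each $\mathcal{M}_k$ is periodic modulo $\text{lcm}(b_1,\dots,b_k)$, so $d(\mathcal{M}_k)$ exists; this is the $K=\q$ case of \Cref{lm: equidistribution for Følner sequences}, or follows by direct counting. Moreover $\delta(\mathcal{M}_k)=d(\mathcal{M}_k)$, since natural density implies logarithmic density. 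The sets $\mathcal{M}_k$ increase, so $m:=\lim_k d(\mathcal{M}_k)$ exists. The theorem then reduces to two claims: $\delta(\mathcal{M}_\cb)=m$, and $\underline{d}(\mathcal{M}_\cb)=m$. Indeed $\overline{d}(\cf_\cb)=1-\underline{d}(\mathcal{M}_\cb)$ and $\lim_k d(\cf_{b_1,\dots,b_k})=1-m$.

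The easy half comes from monotonicity. Since $\mathcal{M}_k\subset\mathcal{M}_\cb$ for every $k$, we get $\underline{\delta}(\mathcal{M}_\cb)\ge d(\mathcal{M}_k)$, hence $\underline{\delta}(\mathcal{M}_\cb)\ge m$. Similarly $\underline{d}(\mathcal{M}_\cb)\ge m$, equivalently $\overline{d}(\cf_\cb)\le 1-m$. Next, the chain $\underline{d}\le\underline{\delta}\le\overline{\delta}\le\overline{d}$ applied to $\mathcal{M}_\cb$ shows that everything follows from the single inequality
$$\overline{\delta}(\mathcal{M}_\cb)\le m .$$
Given this inequality, $\delta(\mathcal{M}_\cb)=m$. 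We then also have $m\le\underline{d}(\mathcal{M}_\cb)\le\underline{\delta}(\mathcal{M}_\cb)=m$, which yields $\overline{d}(\cf_\cb)=1-m=\delta(\cf_\cb)$, as claimed.

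The main obstacle is proving $\overline{\delta}(\mathcal{M}_\cb)\le m$. Natural density cannot be controlled here: Besicovitch's examples show that $\mathcal{M}_\cb$ may have no density. So the argument has to exploit the logarithmic weights. I would follow Hall's smooth-part approach. For $y\ge 2$, write $n=n_y n_y'$, where $n_y$ is the largest $y$-smooth divisor of $n$. Let $\mathcal{M}(y)$ be the set of multiples of those $b_i$ that are $y$-smooth. By Dickson's lemma, $\mathcal{M}(y)$ is generated by finitely many minimal elements, so it is contained in some $\mathcal{M}_k$ and $d(\mathcal{M}(y))\le m$. The key multiplicative computation is that, for a set $S$ of $y$-smooth integers, the set $\{n: n_y\in S\}$ has logarithmic density $\prod_{p\le y}(1-\frac1p)\sum_{s\in S}\frac1s$. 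Applied to $S$ equal to the $y$-smooth elements of $\mathcal{M}(y)$, this density equals $d(\mathcal{M}(y))\le m$. It then remains to bound the contribution of those $n\in\mathcal{M}_\cb$ that are divisible only by some $b_i$ with a prime factor $>y$, while $n_y\notin\mathcal{M}(y)$. I would try to show this exceptional set has upper logarithmic density $\to0$ as $y\to\infty$, by comparing the Dirichlet sums $(s-1)\sum_{n\in\mathcal{M}_\cb}n^{-s}$ as $s\to1^+$ with their $y$-truncations and using an Abelian–Tauberian passage between logarithmic density and the limit $(s-1)\zeta$-normalised series. This is the step I expect to require the most care. If it resists, the fallback is the original Davenport–Erdős argument, which works directly with $\sum_{n\le x,\,n\in\mathcal{M}_\cb}1/n$ and the decomposition of $\mathcal{M}_\cb$ into the disjoint pieces $\mathcal{M}_k\setminus\mathcal{M}_{k-1}$.
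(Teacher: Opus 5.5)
The paper does not prove this theorem; it quotes it from Davenport and Erd\H{o}s. Your proposal therefore has to carry the whole argument, and its decisive step, $\overline{\delta}(\mathcal{M}_\cb)\le m$, is never proved.

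The preliminary reductions are correct. $d(\mathcal{M}_k)$ exists by periodicity; use direct counting here, since the equidistribution lemma of Section~3 assumes pairwise coprime moduli, which the $b_i$ need not be. Monotonicity gives $\underline{d}(\mathcal{M}_\cb)\ge m$ and $\underline{\delta}(\mathcal{M}_\cb)\ge m$. The chain $\underline{d}\le\underline{\delta}\le\overline{\delta}$ then reduces everything to $\overline{\delta}(\mathcal{M}_\cb)\le m$. Dickson's lemma and the smooth-part density formula are also fine.

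That single inequality, however, is the entire content of the Davenport--Erd\H{o}s theorem, and the smooth-part decomposition does not shrink it. Write $E(y)=\mathcal{M}_\cb\setminus\mathcal{M}(y)$. The set $\mathcal{M}(y)\subseteq\mathcal{M}_\cb$ has a natural density, and it contains $\mathcal{M}_k$ once $y$ is at least the largest prime factor of $b_1\cdots b_k$. So $d(\mathcal{M}(y))\to m$ and
\[
\overline{\delta}(E(y))=\overline{\delta}(\mathcal{M}_\cb)-d(\mathcal{M}(y))\longrightarrow\overline{\delta}(\mathcal{M}_\cb)-m .
\]
Hence ``$\overline{\delta}(E(y))\to 0$'' is just a restatement of the claim. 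Comparing $(s-1)\sum_{n\in\mathcal{M}_\cb}n^{-s}$ with its $y$-truncation is trivial for each fixed $s>1$, by dominated convergence, since $E(y)$ decreases to $\emptyset$. The real difficulty is interchanging $y\to\infty$ with $s\to1^+$, and you give no estimate uniform in $s$. The fallback to ``the original Davenport--Erd\H{o}s argument'' is a citation, not a proof.

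What is missing is a uniform bound such as
\[
\sum_{n\in\mathcal{M}_\cb}n^{-s}\le m\,\zeta(s)\qquad\text{for all } s>1 .
\]
By monotone convergence it suffices to show $\sum_{n\in\mathcal{M}_k}n^{-s}\le d(\mathcal{M}_k)\,\zeta(s)$. Under the probability $n\mapsto n^{-s}/\zeta(s)$, the Euler product shows that the valuations $\nu_p(n)$ are independent with $\mathbb{P}(\nu_p\ge j)=p^{-js}$. By the Chinese Remainder Theorem, $d(\mathcal{M}_k)$ is the probability of the same event, ``$\nu_p(b_i)\le\nu_p$ for every $p$, for some $i\le k$'', when the finitely many relevant $\nu_p$ ($p\mid b_1\cdots b_k$) are independent with $\mathbb{P}(\nu_p\ge j)=p^{-j}$. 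Since $p^{-js}\le p^{-j}$, the first family can be coupled to lie coordinatewise below the second. The event is increasing in each $\nu_p$, because a set of multiples is closed upwards under divisibility, so its probability under the first family is at most its probability under the second.

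For the matching lower bound, $\liminf_{s\to1^+}(s-1)\sum_{n\in\mathcal{M}_\cb}n^{-s}\ge d(\mathcal{M}_k)$ for every $k$, so $\lim_{s\to1^+}(s-1)\sum_{n\in\mathcal{M}_\cb}n^{-s}=m$. The terms are nonnegative, so the Hardy--Littlewood--Karamata Tauberian theorem, applied to the nondecreasing function $t\mapsto\sum_{n\le e^t,\,n\in\mathcal{M}_\cb}1/n$, gives $\delta(\mathcal{M}_\cb)=m$. Your sandwich argument then finishes the proof, and Dickson's lemma and the smooth parts are no longer needed.
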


	We now show that there is no Davenport–Erdős theorem for general sieves, by providing a sieve $R$ for which $\delta(\fr)$ does not exist\footnote{In particular, this shows that $d(\fr)$ does not exist, which implies that Proposition 2.2 in \cite{Ekedahl} is incorrect.}. 
	
	\begin{proposition}
		\label{prop: sieve without density}
		There is a sieve $R$ for which $\delta(\fr)$ does not exist.\footnote{Independent from us, user ``Leeham'' in the comment section of \cite{Bloom2} used a Large Language Model to find another example of such a set. Their investigation was motivated by the fact that prior to the 12/01/2026, it was incorrectly claimed in this source that finding such a sieve was an open problem of Erdős.}
	\end{proposition}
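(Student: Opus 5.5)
The plan is to build an explicit sieve over $\q$ whose moduli are distinct primes, each carrying a single residue class. The sieve will wipe out entire very long windows of integers, while the sieving of the "parasitic" elements of those classes stays negligible everywhere else. The logarithmic density of $\fr$ will then oscillate between values close to $0$ and values close to $1$.

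\textbf{Construction.} Fix $\epsilon\in(0,1/2)$. Choose rapidly increasing scales $N_1<M_1<N_2<M_2<\dots$ with $M_k=N_k^{k}$ and $N_{k+1}\geq \exp(M_k)$. Set $W=\bigcup_k [N_k,M_k]\cap\mathbb{N}$. To each $x\in W$ assign a distinct prime $p_x>x$, chosen so large that $\sum_{x\in W}1/p_x<\epsilon$; this is possible since we may take $p_x$ larger than $2^{x}/\epsilon$. Let $R$ be the sieve supported on $\{p_x\z: x\in W\}$ with $R_{p_x}=x+p_x\z$.
\begin{itemize}
\item This is a valid sieve: the moduli are pairwise coprime, each $R_{p_x}$ is one class and hence $\neq\z$, and the support is infinite.
\item The sieve is even Erdős, although this is not needed.
\item Every $x\in W$ lies in $R_{p_x}$, so $\fr\cap W=\emptyset$.
\end{itemize}

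\textbf{Key counting fact.} Since $0<x<p_x$, the smallest positive element of $x+p_x\z$ is $x$ itself. Hence for any $N$,
\[
\Bigl|[1,N]\cap\bigcup_{x\in W}R_{p_x}\Bigr|\leq \sum_{x\in W,\,x\leq N}\Bigl(\frac{N}{p_x}+1\Bigr)\leq \epsilon N+|W\cap[1,N]|.
\]
The bound is only claimed for $N$ at our chosen scales, so a careful count of the $+1$ terms is not needed. For $N=N_{k+1}$ we have $|W\cap[1,N]|\leq M_k\leq \log N$. So $|\fr\cap[1,N_{k+1}]|\geq (1-\epsilon)N_{k+1}-\log N_{k+1}$.

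\textbf{Upper bound on $\overline{\delta}$ is large.} I will convert the count above into a logarithmic statement. Applying the bound at every $n$ between $\sqrt{N_{k+1}}$ and $N_{k+1}$ is valid: that range lies in a gap where $W\cap[1,n]\subset[1,M_k]$ and $M_k$ is tiny. Partial summation then gives
\[
\frac{1}{\log N_{k+1}}\sum_{m\in\fr\cap[1,N_{k+1}]}\frac1m\geq 1-\epsilon-o(1).
\]
Hence $\overline{\delta}(\fr)\geq 1-\epsilon$.

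\textbf{Lower bound on $\underline{\delta}$ is small.} At $N=M_k$ the window $[N_k,M_k]$ contains no $R$-free numbers. Therefore
\[
\sum_{m\in\fr\cap[1,M_k]}\frac1m\leq \sum_{m<N_k}\frac1m\leq \log N_k+1=\frac{\log M_k}{k}+1.
\]
Dividing by $\log M_k$ shows $\underline{\delta}(\fr)=0$.

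Since $0<1-\epsilon$, the logarithmic density $\delta(\fr)$ does not exist. The only point needing care is the partial-summation step, where one must check that the error terms from the earlier windows are negligible; this is guaranteed by the super-exponential spacing $N_{k+1}\geq\exp(M_k)$.
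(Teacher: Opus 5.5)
Your proof is correct and is essentially the paper's construction: the paper also takes a set $S$ of oscillating logarithmic density (there $\underline{\delta}(S)=1/3$, $\overline{\delta}(S)=2/3$), puts each $i\in S$ alone in a class $i+p_i^4\z$, and bounds the parasitic elements $i+kp_i^4$, $k\geq 1$, by $(\zeta(4)-1)(\log N+1)$ in the harmonic sum. Your sparser windows and adaptively chosen primes push the oscillation to $0$ versus $1-\epsilon$, so you avoid the paper's numerical check that $\zeta(4)-1<1/10$. One imprecision: your counting bound holds for every $n$, with $|W\cap[1,n]|=O(M_k)$ throughout $[M_k,N_{k+1}]$, so run partial summation over that whole range; using only $n\in[\sqrt{N_{k+1}},N_{k+1}]$ gives $(1-\epsilon)/2$ rather than $1-\epsilon$, which still suffices.
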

	
	\begin{proof}

		Start by defining $S = \bigcup_{i=0}^\infty ]2^{2^{2i}},2^{2^{(2i+1)}}]$ and $I_N = [1,N]$. We have $\bar{\delta}(S) = 2/3$ and $\underline{\delta}(S) = 1/3$. Indeed, since $f(x) = 1/x$ is a non-increasing function, we can bound the Riemann sum by certain integrals which gives $$\sum_{u < m \leq v }\frac{1}{m} = \log(v)-\log(u)+c$$ for some small constant $c$. It is now easy to see that by considering the set $S$ intersected with intervals of the form $[1,2^{2^{(2k+1)}}]$, we have $$\bar{\delta}(S) = \lim_{k \rightarrow \infty} \frac{1}{2^{2k+1}\log(2)}\sum_{i \leq k} \sum_{2^{2^{2i}}<m\leq2^{2^{(2i+1)}}}\frac{1}{m} = \lim_{k \rightarrow \infty} \frac{1}{2^{2k+1}\log(2)}\sum_{i \leq k}(4^i\log(2)+c) = \frac{2}{3}. $$
		On the other hand, by considering the intersection of $S$ with sets of the form $[1,2^{2^{(2k)}}]$ we have $$\underline{\delta}(S) = \lim_{k \rightarrow \infty} \frac{1}{2^{2k}\log(2)}\sum_{i < k} \sum_{2^{2^{2i}}<m\leq2^{2^{(2i+1)}}}\frac{1}{m} = \lim_{k \rightarrow \infty} \frac{1}{4^{k}\log(2)}\sum_{i < k}(4^i\log(2)+c) = \frac{1}{3}. $$

		We now consider the sieve $R$ defined by $R_i = i\one_S(i)+p_i^4\z$, where $\one_S$ denotes the characteristic function of $S$. We want to show that $\delta(\fr)$ is not well defined.
		
		To do this, first notice that any number in $\fr$ must be in $S^c$, so we get $\fr\cap I_N \subset S^c\cap I_N$. Next, notice that for a number $x$ to be sieved out from $I_N$, either $x \in S\cap I_N$, or $x = i\one_S(i) + p_i^4k$, with some $k \geq 1$. Since $x \leq N$, we get $i \leq i\one_S(i)+p_i^4k \leq N $. Consequently, we get that $$\sum_{m \in S^c\cap I_N} \frac{1}{m}- \sum_{i \leq N}\sum_{1 \leq k \leq N/p_i^4} \frac{1}{i+kp_i^4} \leq \sum_{m \in \fr \cap I_N} \frac{1}{m}.$$ By using the fact that $\sum_{1\leq k \leq N}\frac{1}{k} \leq \log(N) +1$, we can bound the double sum by $$\sum_{i \leq N}\sum_{1 \leq k \leq N/p_i^4} \frac{1}{i+kp_i^4}  \leq \sum_{i \leq N}\frac{1}{p_i^4}\sum_{1 \leq k \leq N/p_i^4} \frac{1}{k} \leq \sum_{i \leq N} \frac{1}{p_i^4}(\log(N)+1).$$ The series $\sum_i p_i^{-4}$ can be bounded by $\zeta(4)-1$, so, using the fact that $\fr\cap I_N \subset S^c\cap I_N$, we have $$\sum_{m \in S^c\cap I_N} \frac{1}{m} - (\zeta(4)-1)(\log(N)+1) \leq \sum_{m \in \fr \cap I_N} \frac{1}{m} \leq \sum_{m \in S^c\cap I_N} \frac{1}{m}.$$
		
		We have that $\zeta(4)-1< 1/10$, so taking $N_i = 2^{2^{2i+1}}$ and $M_i = 2^{2^{2i}}$, we have that for $i$ big enough, $$\frac{1}{3}-\frac{1}{10} + \epsilon_i \leq\frac{1}{\log(N_i)}\sum_{m \in \fr \cap I_{N_i}} \frac{1}{m} \leq \frac{1}{3} + \epsilon_i,$$ and $$\frac{2}{3}-\frac{1}{10} + \epsilon_i \leq\frac{1}{\log(M_i)}\sum_{m \in \fr \cap I_{M_i}} \frac{1}{m} \leq \frac{2}{3} + \epsilon_i,$$ where $|\epsilon_i| \rightarrow 0$ as $i$ increases. For sufficiently big $i$, these two intervals will not intersect, so $\fr$ cannot have a well defined logarithmic density.  
	\end{proof}
	
	The key thing here is that our sieve $R$ was built in such a way that the set $\{\min_{x \in R_i} |x|: i \in \mathbb{N}\}$ does not have logarithmic density. But what would happen if we didn't sieve out these elements, that is, if we only sieved out those $i+kp_i^4$ with $k\geq 1$. Should this set have logarithmic density? This was first asked by Erdős in Point (26) of \cite{Erds4}, and then again in \cite{Erds2}. See also Problems number 25 and 486 in the Erdős Problems website (\cite{Bloom} and \cite{Bloom2}) for discussion on this problem. We answer the question affirmatively, in the case where we are sieving out by a sieve such that $\sum_i \frac{|R_i|}{b_i}< \infty$.

	\begin{theorem}
		\label{thm: Erdős conjecture}
		Let $\cb = \{b_1,b_2,\dots\}$ be a sequence of positive integers. For each $i$, let $R_i$ be sets of the form $$R_i = \{r_i^{(k)}: 0 \leq k \leq c(i)\} +b_i\mathbb{N}$$ where $0 \leq r_i^{(k)} < b_i$. Then, if $\sum_i |R_i|/b_i < \infty,$ we have $$d\left((\bigcup_i R_i)^c\right) = \lim_{L \rightarrow \infty}d\left((\bigcup_{i=1}^L R_i)^c\right). $$
	\end{theorem}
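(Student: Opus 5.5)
The plan is a direct sandwich argument. The key observation is that, because each $R_i$ only contains the \emph{positive} translates $r_i^{(k)}+b_i m$ with $m\geq 1$, the tail of the sieve has a light-tail bound that holds uniformly in $N$. This is essentially the strong light tails property for $I_N=[1,N]$. Write $\cf:=(\bigcup_i R_i)^c$ and $\cf_L:=(\bigcup_{i\le L}R_i)^c$, viewed inside $\mathbb{N}$.

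\emph{Step 1: each $d(\cf_L)$ exists and the limit exists.} For fixed $L$, let $M=\text{lcm}(b_1,\dots,b_L)$. Each $R_i$ agrees with the full residue-class union $\{r_i^{(k)}\}+b_i\z$ on $\mathbb{N}$ except for the finitely many elements $r_i^{(k)}\in[1,b_i)$. So $\cf_L$ differs by a finite set from a set periodic mod $M$. A periodic set has a density, namely the proportion of residues mod $M$ it contains, and finite modifications do not change densities. So $d(\cf_L)$ exists. Note that the $b_i$ are not assumed coprime, so I will not use the product formula of \Cref{lm: equidistribution for Følner sequences}; periodicity is all that is needed. Since $\cf_{L+1}\subset\cf_L$, the sequence $d(\cf_L)$ is non-increasing and bounded below by $0$, so $\lim_L d(\cf_L)$ exists.

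\emph{Step 2: uniform tail bound.} For each $i$ and each $k$, the number of $m\ge 1$ with $r_i^{(k)}+b_im\le N$ is at most $N/b_i$. Hence
\[
\Bigl|[1,N]\cap \bigcup_{i>L}R_i\Bigr|\le N\sum_{i>L}\frac{|R_i|}{b_i}=:N\varepsilon_L ,
\]
and $\varepsilon_L\to0$ by the hypothesis $\sum_i|R_i|/b_i<\infty$. This is exactly where the restriction to $b_i\mathbb{N}$ rather than $b_i\z$ matters. It rules out the small representatives $r_i^{(k)}$ that were used in \Cref{prop: sieve without density} to destroy density. The bound also needs no coprimality and no distinctness of the $b_i$.

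\emph{Step 3: sandwich.} We have $\cf\subset\cf_L$ and $\cf_L\setminus\cf\subset\bigcup_{i>L}R_i$. So for every $N$,
\[
\frac{|\cf_L\cap[1,N]|}{N}-\varepsilon_L\le\frac{|\cf\cap[1,N]|}{N}\le\frac{|\cf_L\cap[1,N]|}{N}.
\]
Taking $\liminf$ and $\limsup$ in $N$ gives $d(\cf_L)-\varepsilon_L\le\underline d(\cf)\le\overline d(\cf)\le d(\cf_L)$. Letting $L\to\infty$ shows that $\underline d(\cf)$ and $\overline d(\cf)$ both equal $\lim_L d(\cf_L)$, which is the claim.

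There is no real obstacle here. The only point needing care is Step 2: one must check that the counting bound $N/b_i$ per residue class holds even when $r_i^{(k)}$ is large relative to $N$, which is trivial since the count is then zero. One should also keep in mind that the definition of $R_i$ excludes the representative $m=0$.
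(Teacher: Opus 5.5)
Your proof is correct and follows the paper's argument. The paper also reduces everything to the uniform tail bound $\bigl|[0,N]\cap\bigcup_{i>L}R_i\bigr|\le N\sum_{i>L}|R_i|/b_i$, which holds because the representative $m=0$ is excluded, and then notes that $\lim_L d(\cf_L)$ exists by monotonicity. Your Step 1 (periodicity modulo $\text{lcm}(b_1,\dots,b_L)$, with no coprimality needed) and your explicit sandwich in Step 3 make precise two points the paper leaves implicit: that each $d(\cf_L)$ exists, and how the tail bound forces the upper and lower densities of $\cf$ to coincide.
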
 
	
	\begin{proof}
		Let $I_N = [0,N]$ and define $$\fr = (\bigcup_i R_i)^c.$$ 
		
		Since $$\fr^c = \bigcup_{l>L}R_l \cup \bigcup_{i=1}^LR_i,$$ the union not necessarily being disjoint, we simply have to show that $$\lim_{L \rightarrow \infty}\overline{d}_I(\bigcup_{i>L}R_i) = 0,$$ and the result will follow. For every $x\in R_i$, we have that $x\geq b_i$, consequently, for any fixed $N$, there are only finitely many $i$ such that $R_i \cap I_N \neq \emptyset$. Additionally, we have that $|R_i \cap I_N|\leq N|R_i|/b_i$, given that $|\{r_i^{(k)}+jb_i:j\in \mathbb{N}_0\}\cap I_N| \leq 1+N|R_i|/b_i$, and we are certainly removing the element $r_i^{(k)}$ for each $k$. Consequently, we have that $$\left|\bigcup_{i>L}R_i \cap I_N\right| \leq \sum_{i>L}|R_i \cap I_N| \leq N \sum_{i>L}\frac{|R_i|}{b_i}.$$ By our hypothesis, the series converges, so after dividing by $N$ taking the limit of $N$ to infinity, and then taking $L$ to infinity, this will go to $0$. Notice that $$\lim_{L \rightarrow \infty}d\left((\bigcup_{i=1}^L R_i)^c\right) $$ is always well defined, since it is the limit of a monotonic sequence.	
	\end{proof}
	
	\begin{remark}
		\label{rmk: density representative 0}
		In the case where the $b_i$ are pairwise coprime, \Cref{thm: Erdős conjecture} shows that $$d\left((\bigcup_i R_i)^c\right) = \prod_i\left(1-\frac{|R_i|}{b_i}\right). $$ Writing $R'_i = \bigcup_{1\leq k\leq c(i)} r_i^{(k)} + b_i\z$ (using the same notation as in the statement of \Cref{thm: Erdős conjecture}), we have that $R'$ is an Erdős sieve. If $S$ denotes the set of all $r_i^{(k)}$, we see that $((\bigcup_i R_i)^c \setminus S)\cap \mathbb{N}  = \frp \cap  \mathbb{N}.   $  Consequently, if $d(S) = 0$, then $d(\frp) = \prod_i\left(1-\frac{|R_i|}{b_i}\right)$, which implies that $R'$ has weak light tails for $I_N = [1,N]$ by \Cref{thm:Fr is generic}.
	\end{remark}
	
	A question that arises from  \Cref{LM: dfr smaller than nu and equivalence} is if $\overline{d}_I(\fr) = \nu_R(C^R_{\{0\},\emptyset})$ for some sieve $R$, does this imply that $d_I(\fr) = \nu_R(C^R_{\{0\},\emptyset})$? This is not the case as we show in the following example.
	
	\begin{example}
		\label{ex: sieve with correct upper density but no density}
		Let $A$ be a set with lower density $0$ but positive upper density, and let $R$ be the sieve defined by $R_i = i \one_A(i)+p_i^2\z$. Let $\mathcal{F}_R' = \mathbb{N} \setminus \bigcup_i \{i \one_A(i) +jp_i^2: j \in \mathbb{N}\}$. Writing $I_N = [1,N]$, we have shown in the proof of  \Cref{thm: Erdős conjecture} that $d_I(\mathcal{F}_R') = \nu_R(C^R_{\{0\},\emptyset})$. We have that $$|\fr \cap I_N| \geq |\mathcal{F}_R'\cap I_N| - |A \cap I_N|.$$ Taking a sequence $N_i$ so that $|A \cap I_{N_i}|/|I_{N_i}|$ tends to $0$, we get that $$\lim_i \frac{|\fr \cap I_{N_i}|}{| I_{N_i}|} \geq \lim_i \frac{|\mathcal{F}_R'\cap I_{N_i}|}{| I_{N_i}|} - \frac{|A \cap I_{N_i}|}{| I_{N_i}|} = \prod_i \left(1-\frac{1}{p_i^2}\right),$$ so $\overline{d}_I(\fr) = \nu_R(C^R_{\{0\},\emptyset})$. On the other hand, we always have that $|\fr \cap I_N| \leq |A^c \cap I_N|$, so $$\frac{|\fr \cap I_N|}{|I_N|} \leq 1- \frac{|A \cap I_N|}{|I_N|}.$$ By choosing a set $A$ such that $\overline{d}(A) >1- \prod_i \left(1-\frac{1}{p_i^2}\right)$, we get that $$\underline{d}(\fr) < \prod_i \left(1-\frac{1}{p_i^2}\right),$$ and so we cannot have that $d(\fr)$ is well defined.
	\end{example}

	\section{Topological Dynamical Systems}
	
	In this section we investigate some of the properties of the topological dynamical system $(\Omega_R,S)$. In particular, we will show generalizations of points $(2),(4)$ and $(5)$ of Sarnak's Program, as described in the introduction. As a consequence, we are able to better characterize the structure of $\fr$, the set of invariant measures of $(\Omega_{R},S)$, along with other properties of this dynamical system.
	
	We begin by showing point $(2)$. We start by setting notation for the more general context in which we will show this result. Let $\underline{s} = (s_1,s_2,\dots) $ be a sequence such that $s_i \leq |R_i^c|$ for every $i$. We follow the notation of \cite{Bartnicka}, and define the set
	\begin{equation}
		\label{eq: def of YR geq}
		Y_{R,\geq \underline{s}} :=  \{A \in \Omega_R: |A+\gb_i| \leq |R_i^c|-s_i\},
	\end{equation} where $|A+\gb_i|$ is the cardinality of this set in $\co_K/\gb_i$. Notice that taking $\underline{0} = (0,0,0,\dots)$ to be the sequence that is constantly $0$, we have $Y_{R,\geq \underline{0}} = \Omega_{R} $. 
	
	\begin{theorem}
		\label{thm: topological entropy}
		Let $R$ be any sieve and $\underline{s}$ a sequence such that $s_i \leq |R_i^c|$ for every $i \in \mathbb{N}$. Then,
		$$\htop(Y_{R,\geq \underline{s}},S) = \log(2) \prod_i \left(\frac{|R_i^c|-s_i}{N(\gb_i)}\right).$$ In particular, $$\htop(\Omega_{R},S) = \log(2) \prod_i \left(1-\frac{|R_i|}{N(\gb_i)}\right).$$
	\end{theorem}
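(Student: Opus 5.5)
By \Cref{lm:Topological and patch  counting are same} we may compute patch counting entropy along the Følner sequence $B_N$. Write $c_{\underline s} := \prod_i (|R_i^c|-s_i)/N(\gb_i)$. The goal is to show
$$\limsup_{N\to\infty}\frac{\log|\{Y\cap B_N : Y\in Y_{R,\geq\underline s}\}|}{|B_N|} = c_{\underline s}\log 2 .$$
(If the product is $0$, only the upper bound matters and the same argument gives entropy $0$.)

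\textbf{Upper bound.} Fix $L$. Any $Y\in Y_{R,\geq \underline s}$ is contained, for each $i\le L$, in a union of at most $|R_i^c|-s_i$ residue classes mod $\gb_i$. Hence $Y$ lies in a set of the form
$$D=\{x: x+\gb_i\in E_i \text{ for } i\le L\}, \qquad |E_i|\le |R_i^c|-s_i .$$
By the Chinese Remainder Theorem, $D$ is a union of $\prod_{i\le L}(|R_i^c|-s_i)$ classes mod $\gc_L=\gb_1\cdots\gb_L$. Also, $Y\cap B_N$ is determined by the choice of the $E_i$ together with an arbitrary subset of $D\cap B_N$.

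The number of choices of $(E_i)_{i\le L}$ is a constant $C_L$, independent of $N$. By \Cref{lm:Counting under restriction} (or \Cref{lm: equidistribution for Følner sequences}), $|D\cap B_N| = |B_N|\prod_{i\le L}\frac{|R_i^c|-s_i}{N(\gb_i)} + o(|B_N|)$. Therefore the number of patches is at most
$$C_L\cdot 2^{|B_N|\prod_{i\le L}(|R_i^c|-s_i)/N(\gb_i)+o(|B_N|)} .$$
Taking $\log$, dividing by $|B_N|$, letting $N\to\infty$ and then $L\to\infty$ gives the upper bound $c_{\underline s}\log 2$.

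\textbf{Lower bound.} For each $i$ choose a set $E_i$ of exactly $|R_i^c|-s_i$ classes inside $R_i^c$. Let
$$F:=\{x\in\co_K: x+\gb_i\in E_i \text{ for all } i\}.$$
This is the free set of the sieve $R'$ with $R'_i = E_i^c$. Every subset $A\subset F$ satisfies $A+\gb_i\subset E_i$, so $A$ is admissible (with $\delta_{\gb_i}=0$) and $|A+\gb_i|\le |R_i^c|-s_i$; hence $A\in Y_{R,\geq\underline s}$. Consequently the number of patches is at least $2^{|F\cap B_N|}$, and the entropy is at least $\log 2\cdot \overline d(F)$.

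\textbf{Main obstacle.} The hard step is showing $\overline d(F)\ge c_{\underline s}$. This requires a light-tail type estimate for $R'$, and $R'$ need not be Erdős, nor have light tails for $B_N$. To get around this, I would not use $F$ directly. Instead I would use a truncated set: for fixed $L$, take $F_L$ to be the elements satisfying the conditions only for $i\le L$, and then, within each $F_L$-pattern, use a shifted translate.

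Concretely, the first thing I would try is to pick a translate $x$ so that, on $x+B_N$, the tail constraints $i>L$ remove few points. An averaging argument over $x\in\co_K/\gc$ for large $\gc$ should give this: the expected proportion of points of $B_N$ hit by a random class mod $\gb_i$ is roughly $1/N(\gb_i)$ plus the error from \Cref{rmk: particular case for ideals}. This should yield $\overline d \ge \prod_{i\le L}(\cdots)-\epsilon_L$ whenever $\sum_i \vol(R'_i)<\infty$.

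If instead $\sum_i \vol(R'_i)=\infty$, then $c_{\underline s}=0$ by \Cref{lm: finite sum finite products}, and the lower bound is trivial. Since patch counts are translation invariant, translates suffice, and letting $L\to\infty$ completes the proof. The case $\underline s=\underline 0$ gives the formula for $\htop(\Omega_R,S)$.
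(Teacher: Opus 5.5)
Your upper bound is correct and is essentially the paper's: you enumerate the occupied class sets $E_i$ for $i\le L$, while the paper enumerates a shift in $\prod_{i\le M}\co_K/\gb_i$ together with $s_i$-element sets $D_i\subset R_i^c$.

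The lower bound, however, has a genuine gap. You rightly see that the fixed set $F$ may have density far below $c_{\underline s}$. But your remedy is a single translate $x$, averaged over $\co_K/\gc$ for a finite modulus $\gc$ and justified by shift invariance of patch counts. That only randomizes the finitely many classes with $\gb_i\mid\gc$. All other classes are moved rigidly by the same $x$, and they can still cover everything. For example, let $(a_i)$ be a sequence of integers in which every integer occurs infinitely often, put $R_i=a_i+p_i^2\z$, and take $\underline s=\underline 0$, so that necessarily $E_i=R_i^c$. This sieve is Erdős, yet $\bigcup_{i>M}(x+R_i)=\z$ for every $M$ and every $x$. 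Your construction therefore yields no points at all, whereas the theorem asserts $\htop(\Omega_R,S)=\log 2\prod_i(1-p_i^{-2})>0$. In particular, your claimed bound $\overline d\ge\prod_{i\le L}(\cdots)-\epsilon_L$ whenever $\sum_i\vol(R'_i)<\infty$ is false.

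The missing idea is to translate each class set \emph{independently}. For $t=(t_i)\in G_R$, every subset $A$ of $F^{(t)}:=\bigcap_i(t_i+E_i+\gb_i)$ still lies in $Y_{R,\geq\underline s}$. Admissibility holds with $\delta_{\gb_i}=-t_i$, and $|A+\gb_i|\le|E_i|=|R_i^c|-s_i$. Now choose $t$ according to the Haar measure. By independence of the coordinates, each fixed $y\in B_N$ lies in $F^{(t)}$ with probability exactly $\prod_i|E_i|/N(\gb_i)=c_{\underline s}$. Hence $\mathbb{E}\,|B_N\cap F^{(t)}|=c_{\underline s}|B_N|$, and for each $N$ some $t=t(N)$ gives at least $2^{c_{\underline s}|B_N|}$ patches in $B_N$. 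This is the paper's argument. It needs no truncation at $L$, no Erdős or light-tail hypothesis, and no case split on $\sum_i\vol(R'_i)$. It also involves no error term: the average over a uniformly random class is exact, so \Cref{rmk: particular case for ideals} never enters.
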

	
	\begin{proof}
		Our proof follows closely the one used in the proof of Lemma 5.14 in \cite{Fabian}. By  \Cref{lm:Topological and patch  counting are same}, we have to show that $$\lim_{N \rightarrow \infty} \frac{\log_2(|\{E \subset B_N: E \in Y_{R,\geq \underline{s}}\}|)}{|B_N|} = \prod_i \left(\frac{|R_i^c|-s_i}{N(\gb_i)}\right).$$
		
		We start by showing that this is the correct upper bound. Fix some positive integer $M$, and let $$G_M = \prod_{i \leq M} \co_K/\gb_i.$$ Given sets $D_1, \dots, D_M$ such that $D_i \subset R_i^c$ and $|D_i| = s_i$, define for any $\delta = (\delta_1,\dots,\delta_M) \in G_M$ the sets $$U_{D_1,\dots,D_M}(\delta) = \left(\co_K\setminus \bigcup_{i\leq M} (\delta_i + (R_i \cup D_i))\right) \cap B_N.$$ Any $E \subset B_N$ such that $E \in Y_{R,\geq \underline{s}}$ must be a subset of some $U_{D_1,\dots,D_M}(\delta)$, and so we have that $$|\{E \subset B_N: E \in Y_{R,\geq \underline{s}}\}| \leq \sum_{\delta \in G_M} \sum_{D_1,\dots,D_M} |\{E \subset U_{D_1,\dots,D_M}(\delta)\}| =\sum_{\delta \in G_M} \sum_{D_1,\dots,D_M} 2^{|U_{D_1,\dots,D_M}(\delta)|}, $$ where the second sum runs over all possible choices of sets $D_i \subset R_i^c$ such that $|D_i| = s_i$.  \Cref{lm: equidistribution for Følner sequences} tells us that $$|U_{D_1,\dots,D_M}(\delta)| = |B_N|\prod_{i=1}^M\left(\frac{|R_i^c|-s_i}{N(\gb_i)}\right) + o(|B_N|),$$ but the error term depends on the set $U_{D_1,\dots,D_M}(\delta)$. Since there are only finitely many such sets, there is a constant $C_M$ dependent only on $M$ such that for any such set we have $$|U_{D_1,\dots,D_M}(\delta)| \leq |B_N|\prod_{i=1}^M\left(\frac{|R_i^c|-s_i}{N(\gb_i)}\right) + C_M f(N),$$ where $f = o(|B_N|)$. Therefore, $$ \log_2(|\{E \subset B_N: E \in Y_{R,\geq \underline{s}}\}|) \leq \log_2\left(|G_M|\prod_{i=1}^M {|R_i^c| \choose s_i}\right) + |B_N|\prod_{i=1}^M\left(\frac{|R_i^c|-s_i}{N(\gb_i)}\right) + C_M f(N),$$ which implies that $$\lim_{N \rightarrow \infty} \frac{\log_2(|\{E \subset B_N: E \in Y_{R,\geq \underline{s}}\}|)}{|B_N|} \leq \prod_{i=1}^M \left(\frac{|R_i^c|-s_i}{N(\gb_i)}\right).$$ 
		
		For the lower bound, take any choice of $A_i \subset R_i^c$ such that $|A_i| = s_i$ and define $R_i' = R_i \cup A_i$. Note that $\Omega_{R'} \subset  Y_{R,\geq \underline{s}}$. We pick according to a uniform distribution an element $t_i$ of $\co_K/\gb_i$ independently for every $i$. For any $x \in \co_K$, the probability that $x \not \in t_i+R'_i$ for some $i$ is $1-\vol(R'_i)$. Hence, for any fixed $N$, the expected number of $x \in B_N$ such that $x \not \in t_i+R'_i$ for any $i$ is $|B_N|\prod_i(1-\vol(R'_i))$. Consequently, there must be some choice of $t_i$ for which $$|B_N \setminus \bigcup_i (t_i+R'_i)| \geq |B_N|\prod_i(1-\vol(R'_i)).$$ Noticing that any subset of $\co_K \setminus\bigcup_i (t_i+R'_i)$ is $R'-$admissible, we have $$\log_2(|\{E \subset B_N: E \in \Omega_{R'}\}|) \geq \log_2(|\{E: E \subset B_N \setminus\bigcup_i (t_i+R'_i)\}|) =  |B_N \setminus \bigcup_i (t_i+R'_i)|,$$  	
		and so, using the fact that $$1-\vol(R'_i) = 1- \frac{|R_i|+s_i}{N(\gb_i)} = \frac{|R_i^c|-s_i}{N(\gb_i)}, $$ 	
		we get, by combining these equations, that
		$$\log_2(|\{E \subset B_N: E \in Y_{R,\geq \underline{s}}\}|) \geq \log_2(|\{E \subset B_N: E \in \Omega_{R'}\}|) \geq |B_N|\prod_i \left(\frac{|R_i^c|-s_i}{N(\gb_i)}\right),$$ which concludes the proof.
	\end{proof}

We now generalize point $(4)$ of Sarnak's program. In the case of $\cb-$free systems,  a large generalization of this result was given in Theorem 1.3 of \cite{Dymek2}. We show the following for sieves.

\begin{theorem}
	\label{thm: arbitrary holes iff B infinite}
	For a sieve $R$ over an étale $\q-$algebra $K$ the following holds.
	\begin{enumerate}
		\item $\emptyset \in X_R.$
		\item $(\Omega_R,S)$ is proximal.
		\item For any lattice $\Gamma \subset \co_K$, and $a \in \co_K$, $a+ \Gamma \not \subset \fr$.
		\item We have $$\inf\{\overline{d}_I(\fr): I_N \text{ is a Følner sequence in }\co_K\} = 0.$$
	\end{enumerate}
	
\end{theorem}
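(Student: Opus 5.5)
The plan is to build everything on one combinatorial fact: $\fr$ contains arbitrarily large holes, and more strongly, these holes can be placed inside any prescribed coset of any lattice. Since $\cb_R$ is infinite, pairwise coprime, and (as we assume when proving results) $R_\gb\neq\emptyset$ for all $\gb$, we fix $N$ and the finite set $B_N$, enumerated as $B_N=\{y_1,\dots,y_m\}$. We pick distinct $\gb_{i_1},\dots,\gb_{i_m}\in\cb_R$ and elements $r_j\in R_{\gb_{i_j}}$. By the Chinese Remainder Theorem there is $x\in\co_K$ with $x+y_j\equiv r_j \bmod \gb_{i_j}$ for every $j$. Then $(x+B_N)\cap\fr=\emptyset$, i.e. $S_x(\fr)\cap B_N=\emptyset$. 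Letting $N\to\infty$ gives $S_{x_N}(\fr)\to\emptyset$, which proves (1).

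For (3), given a lattice $\Gamma$ and $a\in\co_K$, we repeat the construction with the extra constraint that $x$ lies in a suitable class. Since $[\co_K:\Gamma]<\infty$, there is an integer $M\ge1$ with $M\co_K\subset\Gamma$. Only finitely many $\gb\in\cb_R$ can fail to be coprime to $M\co_K$, because each prime ideal divides at most one element of $\cb_R$. So we choose a single $\gb\in\cb_R$ coprime to $M\co_K$ and $r\in R_\gb$, and solve $x\equiv a \bmod M\co_K$ and $x\equiv r\bmod \gb$ by CRT. Then $x\in (a+\Gamma)\cap R_\gb$, so $a+\Gamma\not\subset\fr$.

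For (2), we use \Cref{lm: proximality auxil result} with $x_0=\emptyset$, which is clearly a fixed point. It suffices to show that for every $A\in\Omega_R$ and every $N$, the set $\{g: S_g(A)\cap B_N=\emptyset\}$ is syndetic. The main obstacle is that $A$ is an arbitrary admissible set rather than $\fr$. Admissibility gives shifts $\delta_\gb$ with $(A+\delta_\gb)\cap R_\gb=\emptyset$, i.e. $A\subset -\delta_\gb+R_\gb^c$.

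First, with $m=|B_N|$, we fix distinct $\gb_{i_1},\dots,\gb_{i_m}$. Let $\gc=\prod_j\gb_{i_j}$, which is an invertible ideal of finite index. By CRT, the set of $g$ with $g+y_j\in -\delta_{\gb_{i_j}}+R_{\gb_{i_j}}$ for all $j$ is a nonempty union of cosets of $\gc$. For any such $g$ we get $(g+B_N)\cap A=\emptyset$, i.e. $S_g(A)\cap B_N=\emptyset$. A coset of a finite-index lattice is syndetic, with compact (finite) set $K$ a set of coset representatives. This gives (2).

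For (4), we take $I_N=x_N+B_N$ with $x_N$ as in the first paragraph. This is a Følner sequence by the translation remark after \Cref{def: Følner sequence}, and $|I_N\cap\fr|=0$. Hence $\overline{d}_I(\fr)=0$, and since densities are nonnegative the infimum is $0$. The only delicate point overall is the reduction in (2) from general admissible $A$ to coset conditions; the remaining steps are direct CRT constructions.
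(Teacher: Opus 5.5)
Your proof is correct. Parts (1) and (4) match the paper's argument exactly: CRT places an element of some $R_{\gb_{i_j}}$ at each point of $x+B_N$, and $x_N+B_N$ serves as the Følner sequence.

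For (2) you use the same mechanism as the paper: a CRT hole persists along a whole coset of the product ideal $\gc$, and such a coset is syndetic. You run it directly with the admissibility witnesses $\delta_\gb$. The paper instead builds the auxiliary sieve $R'_\gb=(A+\gb)^c$, notes $A\subset\frp$, and re-applies (1) to $R'$. Your version avoids checking that $R'$ is a genuine sieve; for instance, $R'$ fails to be one in the degenerate case $A=\emptyset$.

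The real divergence is (3). The paper deduces it from (1) together with the lattice-point count of \Cref{lm:Counting under restriction}. Since $|B_N\cap(a-a_N+\Gamma)|\gg_\Gamma N^n$, the holes $a_N+B_N$ must eventually meet $a+\Gamma$. You instead take an integer $M$ with $M\co_K\subset\Gamma$, use pairwise coprimality of $\cb_R$ to find $\gb$ coprime to $M\co_K$, and solve a single CRT system. This needs no geometry of numbers. It also shows more: $a+\Gamma$ meets $R_\gb$ for every $\gb\in\cb_R$ coprime to $M\co_K$, which is all but finitely many of them.

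The paper's route has the expository merit of deriving all four items from the single fact (1). Yours is more elementary and self-contained for (3).
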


\begin{proof}
	
	We start by showing that $(1)$ holds, and that all remaining statements follow from this. To say that $\emptyset \in X_R$ is equivalent to showing that for every $N$ there is some $a_N$ such that $(a_N+B_N)\cap \fr$ is empty. We now show this holds since $\fr$ is obtained from removing infinitely many congruence classes of ideals which are pairwise coprime.
	
	Fix some $N$, and write $B_N = \{x_1,\dots,x_l\}$. For every $i$, take some $a_i$ such that $a_i\in R_i$. By the Chinese Remainder Theorem, there is some $x$ such that $x +x_i \equiv a_i \mod \gb_{i} $ for each $1\leq i \leq l$. This means that $x+x_i \not \in \fr$ for every $i$, that is, $(x+B_N) \cap \fr = \emptyset$. Consequently, $\emptyset \in X_R.$

	We now show that $(1)$ implies $(2)$. Because $S_a(\emptyset) = \emptyset$ for all $a$, we see that $\emptyset$ is the unique fixed point in $(\Omega_R,S)$. By  \Cref{lm: proximality auxil result}, it is enough to show that for any $A \in \Omega_R$ the set $\{a \in \co_K: d(a+A,\emptyset) < \epsilon\}$ is syndetic. 
	
	Let $R'$ be the sieve supported on $\cb_R$ and defined by $R'_\gb = (A+\gb)^c$. If $A$ is in $\Omega_R$, then we must have $ A+ \gb \neq \co_K$ for every $\gb \in \cb_R$, so $R'$ is indeed a well defined sieve. Additionally, we have that $a \not \in R'_\gb $ for every $a \in A$ and $\gb \in \cb_R$, so $A \subset \frp$. It follows that  $$d(a+A,\emptyset) \leq d(a+\frp,\emptyset),$$ and so we have  
	$$\{a \in \co_K: d(a+A,\emptyset) < \epsilon\} \supset \{a \in \co_K: d(a+\frp,\emptyset) < \epsilon\}. $$
	
	Hence, it is enough to show that the set $ \{a \in \co_K: d(a+\frp,\emptyset) < \epsilon\}$ is syndetic. To do this, notice that this follows from showing that $\{a \in \co_K: \frp \cap (a+B_N) = \emptyset \}$ is syndetic for every $N$. We have shown that $\emptyset \in X_{R'}$, so we know that for every $N$ there is some $a_N$ such that $\frp \cap (a_N+B_N) = \emptyset$. By definition of $\frp$, this means that for every $f \in a_N+ B_N$ there is some $\gb_f$ and $b_f \in R'_{\gb_f}$ such that $f$ is congruent to $b_f \mod \gb_f$. Let  $\gb$ be the product of all the $\gb_f$. For any $b\in \gb$, we have that $$f+b \equiv f \mod \gb_f,$$ and so $f+b \in R'_{\gb_f}$. It follows that $(a_N + B_N + \gb) \cap \frp = \emptyset$.
	
	Consequently, we have $a_N + \gb \subset \{a \in \co_K:  (a+B_N) \cap \frp = \emptyset \} $, so we just have to show that $a_N + \gb$ is syndetic. But taking any finite set such that $C+\gb = \co_K$, we have $a_N + \gb + C = \co_K$, so $\{a \in \co_K:  (a+B_N) \cap \frp = \emptyset \}$ must also be syndetic.  We conclude that $(\Omega_R,S)$ is proximal.

	If $(1)$ holds, then  for any $N$ there exists some $a_N$ such that $(a_N+B_N)\cap \fr = \emptyset.$ Given a lattice $\Gamma$ of $\co_K$ and $a\in \co_K$, we have that $|(a_N+B_N)\cap (a+\Gamma)| = |B_N\cap (a-a_N+\Gamma)|.$ By \Cref{lm:Counting under restriction}, we have $$|B_N\cap (a-a_N+\Gamma)| \gg_\Gamma N^{n}, $$ hence, for sufficiently large $N$ we have $(a_N+B_N)\cap (a+\Gamma) \neq \emptyset$, and so $a+\Gamma \not \subset \fr$. Point $(3)$ follows.
	
	Similarly, if $(1)$ holds, then taking a sequence $a_N$ such that $(a_N+B_N)\cap \fr= \emptyset$ for all $N$, we have that $I_N:= a_N+B_N$ is a Følner sequence, and we have $$d_I(\fr) = \lim_{N \rightarrow \infty}\frac{|I_N \cap \fr|}{|I_N|} = \lim_{N \rightarrow \infty}\frac{0}{|I_N|} = 0.$$ We conclude that point $(4)$ holds.

\end{proof}

\begin{remark}
	\label{rmk: no Erdős sieve has weak light tails for every Følner sequence}
	Since $X_R \subset \Omega_{R}$, we have by point $(2)$ that $(X_R,S)$ is always a proximal dynamical system. 
	
	Additionally, point (4) implies that for every Erdős sieve $R$ there is at least one Følner sequence $I_N$ such that $R$ does not have weak light tails for $I_N$. This is because if $R$ is Erdős, then $\nu_R(C^R_{\{0\},\emptyset}) >0$. Because we can find $I_N$ such that $d_I(\fr) = 0 < \nu_R(C^R_{\{0\},\emptyset})$,  \Cref{thm:Fr is generic} implies that $R$ does not have weak light tails with respect to $I_N$. Equivalently, this shows that there is no sieve $R$ that has weak light tails with respect to every Følner sequence $I_N$. 
\end{remark}

As a corollary of  \Cref{thm: arbitrary holes iff B infinite} we  also get the following result about the translation symmetries of $\fr$. 

\begin{corollary}
	\label{crl: translation (non)invariance of fr}
	Let $R$ be a sieve over an étale $\q-$algebra of degree $n$. If $\fr \neq \emptyset$, there are at most $n-1$ $\q-$independent $x_i$ such that $x_i + \fr = \fr$. In particular, if $R$ is a sieve over $\q$, there is no $x \neq 0$ such that $x + \fr = \fr $.
\end{corollary}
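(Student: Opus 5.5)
The plan is to argue by contradiction using point (3) of \Cref{thm: arbitrary holes iff B infinite}. Let $P := \{x \in \co_K : x + \fr = \fr\}$ be the group of periods of $\fr$; it is a subgroup of $\co_K$, since $x+\fr=\fr$ and $y+\fr=\fr$ give $(x+y)+\fr=\fr$ and $-x+\fr=\fr$. Suppose there are $n$ elements $x_1,\dots,x_n \in P$ that are $\q-$independent. Since $\co_K\otimes\q = K$ has $\q-$dimension $n$ and $\co_K \cong \z^n$, the subgroup $\Gamma := \z x_1 + \dots + \z x_n \subset P$ is a free $\z-$module of rank $n$ inside $\co_K$, i.e.\ a lattice in the sense used in the paper.

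Next, since $\fr \neq \emptyset$, pick some $a \in \fr$. For every $\gamma \in \Gamma$ we have $\gamma + \fr = \fr$, so $a + \gamma \in \fr$. Hence $a + \Gamma \subset \fr$. This directly contradicts point (3) of \Cref{thm: arbitrary holes iff B infinite}, which says $a+\Gamma \not\subset \fr$ for every lattice $\Gamma$ and every $a\in\co_K$. Therefore any family of periods spans a $\q-$subspace of dimension at most $n-1$, so there are at most $n-1$ $\q-$independent $x_i$ with $x_i+\fr=\fr$.

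For the case $K=\q$, we have $n=1$, so there is no nonzero period: any $x\neq 0$ is by itself $\q-$independent, and $x\z$ is a lattice in $\z$. If $\fr\neq\emptyset$ this is excluded by the argument above. If $\fr=\emptyset$, then $x+\fr=\fr$ holds trivially, so the ``in particular'' statement must be read under the standing hypothesis $\fr\neq\emptyset$. For an Erdős sieve this hypothesis is automatic by \Cref{LM: dfr smaller than nu and equivalence}.

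The only point needing care is checking that $\q-$independence of $n$ elements yields a lattice of full rank $n$. This follows since $\z-$span of $\q-$independent elements is free of rank $n$, and it sits inside $\co_K\cong\z^n$, so it has finite index there. No other obstacle is expected, since the heavy lifting is done in \Cref{thm: arbitrary holes iff B infinite}.
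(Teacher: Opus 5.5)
Your argument is correct and is essentially the paper's proof: the $\z$-span of $n$ $\q$-independent periods is a lattice $\Gamma$, and any $a\in\fr$ gives $a+\Gamma\subset\fr$, contradicting point (3) of \Cref{thm: arbitrary holes iff B infinite}. Noting that the periods form a group is a slightly cleaner way to get negative multiples than the paper's induction from $x+\fr\subset\fr$. You are also right that the statement over $\q$ needs $\fr\neq\emptyset$.

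Your closing aside is false, however. \Cref{LM: dfr smaller than nu and equivalence} only gives the upper bound $\overline{d}_I(\fr)\le\nu_R(C^R_{\{0\},\emptyset})$, so being Erdős does not force $\fr\neq\emptyset$. For a counterexample over $\q$, modify \Cref{ex:Sieves with and without light tails}: take $R_1=\{-1,0,1\}+4\z$ and $R_i=\{-i,i\}+p_i^2\z$ for $i\ge 2$. Then $\sum_i\vol(R_i)=3/4+\sum_{i\ge2}2/p_i^2<\infty$, so the sieve is Erdős. But every $x$ with $|x|\ge 2$ lies in $R_{|x|}$, and $0,\pm1\in R_1$, so $\fr=\emptyset$ and every $x$ is a period. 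What does hold is that $\fr\neq\emptyset$ whenever $R$ is Erdős and has weak light tails for some Følner sequence. In that case \Cref{thm:Fr is generic} gives $d_I(\fr)=\prod_{\gb}(1-\vol(R_\gb))>0$.
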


\begin{proof}
	If $x+ \fr \subset \fr$, then $2x+\fr = x + (x+\fr) \subset x + \fr \subset \fr$. By induction, we see that $x\z +\fr \subset \fr$. If there were at least $n$ $\q-$independent $x_i$ such that $x_i + \fr \subset \fr$, then we would have that $$x_1\z + \dots + x_n\z + \fr \subset \fr.$$ But due to the linear independence of the $x_i$, the set $\Gamma = x_1\z + \dots + x_n\z$ is a lattice inside $\co_K$. Taking any $x \in \fr$, we would have that $x+\Gamma \subset \fr$, contradicting point $(3)$ of  \Cref{thm: arbitrary holes iff B infinite}.
	
	The claim when $R$ is a sieve over $\q$ is clear, since if $m \neq 0$, then $m\z$ will be a lattice in $\z$.

\end{proof}
\begin{remark}
	In general, if $K$ is an étale $\q-$algebra of degree $n \geq 2$, there will be sieves such that $x_i +\fr = \fr$ for $(n-1)$ $\q-$independent $x_i$. For example, consider the sieve $R$ over the Gaussian primes $\z[i]$ (so $i = \sqrt{-1}$) supported on $\cb_R = \{p\z[i]:p \text{ is prime}, p \equiv 3 \mod 4\}$ and given by $$R_p = \{0,1,\dots, p-1\} + p\z[i] = \z+(ip)\z $$ whenever $p\z[i]\in\cb_R  $. Since these primes are inert in $\z[i]$, each of the ideals $p\z[i]$ is prime, so they are pairwise coprime. The $R-$free numbers for this set are given by $$\fr = \{a+mi:a \in \z, m \text{ is an integer divisible only by primes $p$ such that } p \not \equiv 3\mod 4\},$$ so it is clear that $1+\fr = \fr$.
\end{remark}

We conclude this section with the generalization of point $(5)$ of Sarnak's program. The proof presented is similar to the proof of Theorem E in \cite{Bartnicka}.

\begin{theorem}
	\label{thm: point 5 of Sarnak}
	The systems $(\Omega_R,S)$ and ($G_R$,$T$) have a non-trivial joining (see \Cref{def: Joining}).
\end{theorem}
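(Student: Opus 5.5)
The plan is to exhibit an explicit closed invariant subset of $\Omega_R\times G_R$ with full projections that is not the whole product. The natural candidate is
$$J := \{(A,g)\in \Omega_R\times G_R : A\subset \varphi_R(g)\},$$
that is, the set of pairs with $(A+g_\gb)\cap R_\gb=\emptyset$ for every $\gb\in\cbr$. We must verify four things: that $J$ is closed, that it is $(S\times T)$-invariant, that it has full projections, and that it is a proper subset.

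\emph{Invariance.} This follows from the computation in \Cref{lm: varphi is factor}. If $A\subset\varphi_R(g)$, then
$$S_a(A)=A-a\subset \varphi_R(g)-a=\varphi_R(T_a g).$$

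\emph{Closedness.} Rewrite the condition coordinatewise: $(A,g)\in J$ if and only if for every $x\in\co_K$ and every $\gb\in\cbr$, either $x\notin A$ or $x+g_\gb\notin R_\gb$. For fixed $x$ and $\gb$, this is a condition on the single coordinate $\one_A(x)$ of $\{0,1\}^{\co_K}$ and the single coordinate $g_\gb\in\co_K/\gb$ of $G_R$. It defines a clopen set, because $R_\gb$ is a union of cosets of $\gb$. An intersection of clopen sets is closed, so $J$ is closed.

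\emph{Full projections.} The projection to $G_R$ is surjective because $(\emptyset,g)\in J$ for every $g$. For the projection to $\Omega_R$, take $A\in\Omega_R$. Admissibility gives, for each $\gb\in\cbr$, some $\delta_\gb$ with $(A+\delta_\gb)\cap R_\gb=\emptyset$. Setting $g=(\delta_\gb+\gb)_{\gb}\in G_R$ yields $A\subset\varphi_R(g)$, so $(A,g)\in J$. This is the one place where admissibility is used, and it matches the definition exactly.

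\emph{Non-triviality.} We need a pair $(A,g)\notin J$. Pick any $\gb_1\in\cbr$ with $R_{\gb_1}\neq\emptyset$, say $r\in R_{\gb_1}$. Take $A=\{0\}$, which is admissible because each $R_\gb\neq\co_K$. Then choose $g$ with $g_{\gb_1}=r$ and the other coordinates arbitrary. Since $0+g_{\gb_1}\in R_{\gb_1}$, the pair $(\{0\},g)$ is not in $J$. Hence $J\subsetneq\Omega_R\times G_R$.

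None of these steps looks like a real obstacle. The point most likely to need care is closedness, since it must be stated properly in the product topology, and it is handled by the clopen-cylinder description above. Note that neither the Erdős condition nor weak light tails is needed, so the theorem holds for every sieve, which is consistent with its statement.
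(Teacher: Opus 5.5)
Your proof is correct and builds the same joining as the paper. Your $J=\{(A,g): A\subset\varphi_R(g)\}$ is exactly the paper's set $J^*=\{(A,g): g\in\prod_i(-A+R_i)^c\}$, with the same invariance and full-projection arguments. The difference is that you check directly that this set is already closed, as an intersection of clopen coordinate conditions, so non-triviality only needs the single pair $(\{0\},g)$ with $g_{\gb_1}\in R_{\gb_1}$; the paper instead takes the closure of $J^*$ and uses a limit argument with $A_i\to\{0\}$ to show it still misses $\{0\}\times\prod_i R_i$.
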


\begin{proof}
	Let $J^* \subset \Omega_R \times G$ denote the set $$J^* = \left\{\left(A, g\right): A \in \Omega_R,g \in \prod_i (-A+R_i)^c  \right\},$$ and let $J$ be the closure of $J^*$. We claim that $J$ is the desired joining. First, notice that given $(A,g) \in J^*$, $(S_a\times T_a)\left(A,  g\right) = (-a+A,a+g). $ But, if $g_i \in (-A+R_i)^c$, then $$a+g_i \in a+(-A+R_i)^c = (a-A+R_i)^c = (-(-a+A)+R_i)^c.$$
	Therefore, $S \times T$ sends $J^*$ into $J^*$, which by continuity implies that $(S_a \times T_a)(J) \subset J$ for every $a$. Since the map is clearly invertible, it follows that $(S_a \times T_a)(J) = J$.
	
	Next, note that $\emptyset \in \Omega_{R}$, so $\{\emptyset\}\times G \subset J^*$, which means that $J$ has full projection into $G$. By definition, for every admissible $A$, $-A+R_i \neq \co_K$, so for every $i$, there is some $g_i \in (-A+R_i)^c$. This defines for every $A$ at least one element of $G$ such that $(A,g) \in J$, so $J$ also has a full projection onto $\Omega_R$.

	It remains to show that $J \neq \Omega_R \times G$. If $A$ is an admissible set such that $0 \in A$ and $(A,g) \in J^*$, then notice that $$g \in  \left(G \setminus \prod_{i} R_i\right).$$
	
	Assume that we have a sequence $(A_i, g_i) \in J^*$ such that the $A_i$ are converging to $\{0\}$ in $\Omega_{R}$. Then, for $i$ big enough, we must have that $0 \in A_i$, since $d(\{0\}, A) = 1$ for every admissible $A$ such that $0 \not \in A$. It follows that for big enough $i$, all the $g_i$ will be contained in $G \setminus \prod_{i} R_i$, so
	\begin{equation*}
		\left(\{0\}\times\prod_{i} R_i \right)\cap J  =\emptyset. \qedhere
	\end{equation*}
\end{proof}

\begin{remark}
	\label{rmk: OmegaR not weakly mixing}
	As pointed out in \cite{Bartnicka},  \Cref{thm: point 5 of Sarnak} implies that $(\Omega_R,S)$ is not a topologically weakly mixing system (see \Cref{def: Ergodic topological dynamical system}). This follows directly from Theorem II.3 in \cite{Furstenberg}, where it is shown that if a system is topologically weakly mixing, then it has no non-trivial joining with a minimal distal system (which ($G_R$,$T$) is, since $d(T_a(x),T_a(y)) = d(x,y)$).
\end{remark}

\section{Sieves with Light Tails}
 
 \Cref{thm:Fr is generic} shows us that we can only generalize point (1) of Sarnak's program to sieves $R$ that have weak light tails for some Følner sequence $I_N$. This points us to the necessity of studying this property. In this section, we first provide motivation, from a number theoretic point of view, why this is interesting. We then give a number of examples of sieves with strong light tails for $B_N$. Most important of these is \Cref{thm: K etale algebra light tails for B_n finite} where we show that if $R$ is any $\cb-$free system over an étale $\q-$algebra $K$, then $R$ has strong light tails for $B_N$. Finally, we investigate the relationship between the weak and strong light tails properties.

\subsection{Repeated Patterns in $\fr$}

Let $R$ be a sieve and assume that $A$ is any finite set, such that we want to know whether the pattern $A$ repeats infinitely in $\fr$. For example, taking $A=\{0,2\}$, this is the same as asking if there are infinitely many twin $R-$free numbers (as in, $x\in \fr$ such that $x+2$ is also in $\fr$). For finite $A$, if we know that $R$ has weak light tails with respect for some $I_N$, then this question reduces to knowing if $A$ is $R-$admissible or not.

If $A$ is not admissible, then there cannot be any $x \in \fr$ such that $x+ A \subset \fr$, since there will be some $\gb \in \cb_R$ such that $-A+R_\gb = \co_K$, and so taking $a \in A \cap  (R_\gb -x),$ we will always have that $a+x \not \in \fr$. On the other hand, for any finite and admissible $A$, \Cref{lm:Sums to cardinality of sets} gives $$d_I(\{x\in I_N: x+A \subset \fr\}) = \lim_{N\rightarrow\infty}\frac{1}{|I_N|}\sum_{x\in I_N}\one_{C^R_{A,\emptyset}}(S_x(\fr)).$$ If $R$ has weak light tails with respect to $I_N$, then \Cref{thm:Fr is generic} (together with \Cref{eq:Formula for nu_R with B empty}) implies that this limit equals $$\nu_R(C^R_{A,\emptyset}) = \prod_{\gb \in \cb_R} \left(1-\frac{|-A + R_\gb|}{N(\gb)}\right).$$ If $R$ is Erdős this product will always be bigger than $0$ since, by definition, if $A$ is admissible, then $|-A + R_\gb| < N(\gb)$, and if $A$ is finite then the product is positive by \Cref{lm: finite sum finite products} (using the fact that $|-A + R_\gb| \leq |A||R_\gb|$). We have just proven the following theorem.

\begin{theorem}
	\label{thm: density of x+A in fr}
	Let $R$ be an Erdős sieve with weak light tails for some Følner sequence $I_N$, and $A$ a finite admissible set of $R$. Then,
	$$d_I(\{x: x+A \subset \fr\}) = \prod_{i} \left(1-\frac{|-A+R_i|}{N(\gb_i)}\right)>0.$$ In particular, there are infinitely many $x$ such that $x+A \subset \fr$.
\end{theorem}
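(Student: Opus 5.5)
The plan is to read the density of translates of the pattern $A$ off the genericity of $\fr$ established in \Cref{thm:Fr is generic}, and then show the resulting product is positive. First, since $R$ has weak light tails for $I_N$, the implication $(2)\Rightarrow(1)$ of \Cref{thm:Fr is generic} gives that $\fr$ is a generic point of $(\Omega_R,S,\nu_R)$ with respect to $I_N$. Because $A$ is finite, $C^R_{A,\emptyset}$ is clopen, so $\one_{C^R_{A,\emptyset}}$ is continuous. Genericity therefore yields
$$\lim_{N\to\infty}\frac{1}{|I_N|}\sum_{x\in I_N}\one_{C^R_{A,\emptyset}}(S_x(\fr)) = \nu_R(C^R_{A,\emptyset}).$$
By \Cref{lm:Sums to cardinality of sets} with $B=\emptyset$, the left-hand average is exactly $|\{x\in I_N: x+A\subset\fr\}|/|I_N|$. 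Hence the density $d_I(\{x: x+A\subset \fr\})$ exists and equals $\nu_R(C^R_{A,\emptyset})$, and \Cref{eq:Formula for nu_R with B empty} rewrites this as $\prod_i\bigl(1-|-A+R_i|/N(\gb_i)\bigr)$.

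Next I would prove positivity, which is the only substantive step. Admissibility of $A$ means $-A+R_i\neq\co_K$ for every $i$, and since $-A+R_i$ is a union of cosets of $\gb_i$, this gives $|-A+R_i|\le N(\gb_i)-1$. So every factor lies in $(0,1]$. For summability, $-A+R_i=\bigcup_{a\in A}(-a+R_i)$ gives $|-A+R_i|\le |A|\,|R_i|$. Since $R$ is Erdős, $\sum_i |-A+R_i|/N(\gb_i)\le |A|\sum_i \vol(R_i)<\infty$. Discarding the indices where $-A+R_i=\emptyset$ (those factors equal $1$), \Cref{lm: finite sum finite products} applies to the remaining terms $a_i=|-A+R_i|/N(\gb_i)\in(0,1)$ and gives a strictly positive product.

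Finally, a set of positive density with respect to a Følner sequence is infinite, since a finite set $F$ satisfies $|F\cap I_N|/|I_N|\le |F|/|I_N|\to 0$; here $|I_N|\to\infty$, as $\co_K$ is infinite and $I_N$ is Følner. This gives infinitely many $x$ with $x+A\subset\fr$.

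I do not expect a real obstacle. The only care needed is in applying \Cref{lm: finite sum finite products}, which requires strict inequalities $0<a_i<1$: this is why the upper bound $a_i<1$ from admissibility must be recorded, and why the zero terms must be set aside before invoking the lemma.
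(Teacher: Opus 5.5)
Your proposal is correct and follows the paper's argument essentially step for step. The density is identified through the genericity of $\fr$ from \Cref{thm:Fr is generic} applied to the continuous function $\one_{C^R_{A,\emptyset}}$, combined with \Cref{lm:Sums to cardinality of sets} and \Cref{eq:Formula for nu_R with B empty}; positivity then follows from admissibility, the bound $|-A+R_i|\le |A||R_i|$, and \Cref{lm: finite sum finite products}. Your extra care in setting aside vanishing terms before invoking that lemma, and in justifying $|I_N|\to\infty$ for the ``infinitely many'' conclusion, fills in details the paper leaves implicit.
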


In the case that $A$ is infinite, $C^R_{A,\emptyset}$ is no longer clopen, and so we cannot use the weak light tails of $R$ to show that there are infinitely many $x$ such that $x+A \subset \fr$ if $\nu_R(C^R_{A,\emptyset}) >0$. Yet, we can still use  \Cref{LM: dfr smaller than nu and equivalence} to say the following.

\begin{theorem}
	\label{thm: density iff light tails} Let $R$ be a sieve and $A$ an $R-$admissible set.
	For any Følner sequence $I_N$, we have $$\overline{d}_I(\{x \in \co_K: x +A \subset \fr\}) \leq \nu_R(C^R_{A,\emptyset}).$$
	
	Additionally, $d_I(\{x: x +A \subset \fr\}) = \nu_R(C^R_{A,\emptyset}) >0$ holds if and only if the sieve $R'$ supported on $\cb_R$ and defined by 
	$$R'_\gb = -A+R_\gb$$
	is Erdős and has weak light tails for $I_N$.
\end{theorem}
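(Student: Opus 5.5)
The plan is to reduce everything to \Cref{LM: dfr smaller than nu and equivalence} applied to the auxiliary sieve $R'$ with $R'_\gb = -A+R_\gb$. First I would check that $R'$ is a genuine sieve. Each $R'_\gb$ is a union of congruence classes mod $\gb$, since $-A+R_\gb = -A + S_\gb + \gb$, and its image in $\co_K/\gb$ is finite. Admissibility of $A$ gives $R'_\gb \neq \co_K$. The support $\cb_R$ is unchanged, so the pairwise coprimality is inherited. (The definition asks for $S_\gb$ finite; if $A$ is infinite I would take $S'_\gb$ to be a set of representatives of $-A+R_\gb$ modulo $\gb$, which is finite since $\co_K/\gb$ is.)

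The key identity is
\[
\fr' := \mathcal{F}_{R'} = \{x\in\co_K : x\notin -A+R_\gb \ \forall \gb\} = \{x : x+A\subset \fr\}.
\]
This holds because $x\in -A+R_\gb$ iff $x+a\in R_\gb$ for some $a\in A$. On the measure side, \Cref{eq:Formula for nu_R with B empty} gives
\[
\nu_R(C^R_{A,\emptyset}) = \prod_{\gb}\left(1-\frac{|-A+R_\gb|}{N(\gb)}\right) = \prod_\gb\left(1-\frac{|R'_\gb|}{N(\gb)}\right) = \nu_{R'}(C^{R'}_{\{0\},\emptyset}).
\]
With these two identities, the first inequality is exactly the first statement of \Cref{LM: dfr smaller than nu and equivalence} applied to $R'$. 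That lemma did not use the Erdős hypothesis, so it applies to any sieve.

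For the equivalence I would argue each direction through the same lemma.
\begin{itemize}
\item[($\Leftarrow$)] If $R'$ is Erdős with weak light tails for $I_N$, then \Cref{LM: dfr smaller than nu and equivalence} gives $d_I(\frp) = \nu_{R'}(C^{R'}_{\{0\},\emptyset})$. Positivity follows from \Cref{lm: finite sum finite products}, using $\sum_\gb \vol(R'_\gb)<\infty$ and $\vol(R'_\gb)<1$.
\item[($\Rightarrow$)] If $d_I(\{x : x+A\subset\fr\}) = \nu_R(C^R_{A,\emptyset}) >0$, the product $\prod_\gb(1-\vol(R'_\gb))$ is positive. By \Cref{lm: finite sum finite products} (each factor lies in $(0,1]$, and factors equal to $1$ are harmless), the series $\sum_\gb \vol(R'_\gb)$ converges, so $R'$ is Erdős. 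The equivalence part of \Cref{LM: dfr smaller than nu and equivalence} then gives weak light tails for $R'$.
\end{itemize}

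The only points needing care are routine. The first is the finiteness and representative issue for $R'_\gb$ when $A$ is infinite, together with the convention that the light-tail definitions allow some $R'_\gb$ to be empty. The second is the edge case in \Cref{lm: finite sum finite products}, which is stated for $0<a_i<1$, where some $\vol(R'_\gb)$ may be $0$. Both are handled by discarding the empty classes, so I expect no real obstacle.
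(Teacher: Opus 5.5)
Your proposal is correct and follows essentially the same route as the paper: identify $\{x : x+A\subset\fr\}$ with $\frp$ for $R'_\gb=-A+R_\gb$, then reduce everything to \Cref{LM: dfr smaller than nu and equivalence} via the identity $\nu_R(C^R_{A,\emptyset}) = \nu_{R'}(C^{R'}_{\{0\},\emptyset})$. The only difference is cosmetic. The paper proves that measure identity by showing $\varphi_R^{-1}(C^R_{A,\emptyset}) = \varphi_{R'}^{-1}(C^{R'}_{\{0\},\emptyset})$ rather than by comparing product formulas, and it leaves implicit the Erdős and positivity bookkeeping through \Cref{lm: finite sum finite products}, which you spell out.
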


\begin{proof}
	We have that $x+A \subset \fr$ is equivalent to $(x+ A)\cap R_\gb = \emptyset$ for every $\gb$, which is the same as $\{x\}\cap (-A + R_\gb) = \{x\} \cap R_\gb' = \emptyset$, so we see that  $$\{x\in \co_K: x+A \subset \fr\} = \frp. $$ 
	
	From  \Cref{LM: dfr smaller than nu and equivalence}, it is now clear that the result will follow if we show that $\nu_{R}(C^R_{A,\emptyset}) = \nu_{R'}(C^{R'}_{\{0\},\emptyset})$. From the definition of $\nu_R$, it is therefore enough to show that $\varphi_{R}^{-1}(C^R_{A,\emptyset}) = \varphi_{R'}^{-1}(C^{R'}_{\{0\},\emptyset})$ (since $\cb_R = \cb_{R'}$, we have that $G_R = G_{R'}$, so this equality makes sense). 
	
	Assume that $\varphi_R(g) \in C^R_{A,\emptyset}$. This is equivalent to $A \subset \varphi_R(g)$. From the definition of $\varphi_{R}$, we see that this is equivalent to $$\mathlarger\forall_{a\in A} \mathlarger\forall_i \hspace{2pt} a+g_i \not \in R_i \Leftrightarrow  \mathlarger\forall_i  \mathlarger\forall_{a\in A}\hspace{2pt}  g_i \not \in -a + R_i \Leftrightarrow  \mathlarger\forall_i \hspace{2pt} g_i \not  \in  -A + R_i \Leftrightarrow  \mathlarger\forall_i\hspace{2pt}  g_i \not  \in R'_i, $$ which is equivalent to $0 \in \varphi_{R'}(g)$. It follows that $\varphi_R(g) \in C^R_{A,\emptyset} $ if and only if $\varphi_{R'}(g) \in C^{R'}_{\{0\},\emptyset}$, which concludes the proof.
\end{proof}

\subsection{Sieves with Strong Light Tails}

We now investigate conditions which imply that a sieve $R$ has strong light tails for the Følner sequence $B_N$. We start by showing that this holds for all Erdős $\cb-$free systems over an étale $\q-$algebra. We need to first demonstrate that this holds for number fields, and then show the result in its full generality. In this section, we always assume that $\cb_R$ is ordered, and so we write $R_i$ for $R_{\gb_i}$.\index{Strong Light Tails ! for $\cb-$free systems in Number Fields}

\begin{lemma}
	\label{lm: light tails for B_n finite}
	Let $R$ be a sieve over a number field $K$ of degree $n$, such that there is a finite set $T$ for which $R_i \subset T + \gb_i$ for every $i$. If $\sum_i \frac{1}{N(\gb_i)} < \infty$, then $R$ is Erdős and has strong light tails for $B_N$.
\end{lemma}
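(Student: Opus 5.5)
The Erdős property is immediate. Since $R_i \subset T+\gb_i$, we have $|R_i| \leq |T|$, so $\sum_i \vol(R_i) \leq |T|\sum_i N(\gb_i)^{-1} < \infty$. For the strong light tails, the union bound reduces everything to counting, for each $t\in T$ and each $i>L$, the points of $B_N$ lying in $t+\gb_i$:
\[
\Bigl|B_N \cap \bigcup_{i>L} R_i\Bigr| \leq \sum_{t\in T}\sum_{i>L} |B_N\cap (t+\gb_i)|.
\]
We then split the range of $i$ according to the size of $N(\gb_i)$ relative to $N^n$. Fix a large constant $C$ to be chosen, and call $i$ small if $N(\gb_i)\leq (CN)^n$ and large otherwise.

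For small $i$ we use \Cref{rmk: particular case for ideals}:
\[
|B_N\cap(t+\gb_i)| = \frac{|B_N|}{N(\gb_i)} + O_K\Bigl(1+\bigl(N/N(\gb_i)^{1/n}\bigr)^{n-1}\Bigr).
\]
The main terms sum to at most $|B_N|\sum_{i>L}N(\gb_i)^{-1}$, which is $o(|B_N|)$ as $L\to\infty$. The error terms are the delicate part. The $O(1)$ term is multiplied by the number of small $i$, which is at most $\#\{i: N(\gb_i)\leq (CN)^n\}$. Because $\sum N(\gb_i)^{-1}<\infty$, this count is $o(N^n)$. Indeed, by Cauchy condensation on dyadic blocks, $\#\{i : N(\gb_i)\le X\}=o(X)$: the count up to $X$ is at most $\#\{i: N(\gb_i)\le \epsilon X\} + X\sum_{N(\gb_i)>\epsilon X}N(\gb_i)^{-1}$. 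The term $N^{n-1}N(\gb_i)^{-(n-1)/n}$ we handle by dyadic decomposition in $N(\gb_i)\in(2^k,2^{k+1}]$. Writing $c_k$ for the number of ideals in that block, we have $\sum_k c_k 2^{-k}<\infty$, and we need $N^{n-1}\sum_{2^k\le (CN)^n} c_k 2^{-k(n-1)/n}=o(N^n)$. This is $N^{n-1}\sum_k (c_k2^{-k})\,2^{k/n}$. Since $2^{k/n}\leq CN$ and $c_k2^{-k}\to0$ summably, dominated convergence (splitting at $2^{k/n}\le \epsilon N$) makes this $o(N^n)$. Here $|B_N|\asymp_K N^n$. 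Note that these error estimates do not even need $i>L$; they are $o(|B_N|)$ uniformly.

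For large $i$, where $N(\gb_i)>(CN)^n$, we use \Cref{lm: lambda_i asymp N(gb)(1/n)}: $\lambda_1(\gb_i)\gg_K N(\gb_i)^{1/n}>c_K C N$. Choosing $C$ so that $\lambda_1(\gb_i)>2N+2\max_{t\in T}\|t\|$, any two points of $t+\gb_i$ are farther apart than the diameter of $B_N$. Hence $|B_N\cap(t+\gb_i)|\leq 1$, and moreover the point can only be present if some nonzero element of $\gb_i$ has norm at most $N+\|t\|$ or the point is $t$ itself. In fact, if $x\in B_N\cap (t+\gb_i)$ and $x\neq t$, then $x-t\in\gb_i\setminus\{0\}$ has norm $\le N+\|t\|<\lambda_1(\gb_i)$, which is a contradiction. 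So large $i$ contribute only points of $T$, at most $|T|$ points in total. This is $o(|B_N|)$.

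Combining the two ranges, $\overline{d}(\bigcup_{i>L}R_i)\ll_{K,T}\sum_{i>L}N(\gb_i)^{-1}\to0$. The main obstacle is the error-term bookkeeping for the small ideals: showing that the accumulated remainders from \Cref{rmk: particular case for ideals} are $o(N^n)$ using only $\sum N(\gb_i)^{-1}<\infty$, which is where the dyadic/dominated-convergence argument is needed.
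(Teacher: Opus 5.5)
Your proof is correct and has the same skeleton as the paper's. You apply the count of \Cref{rmk: particular case for ideals} only to ideals that are small compared with $N$: the paper restricts to $\lambda_1(\gb_i)\le N$, you to $N(\gb_i)\le (CN)^n$, which is equivalent by \Cref{lm: lambda_i asymp N(gb)(1/n)}. Both proofs then bound the main terms by the tail $\sum_{i>L}N(\gb_i)^{-1}$ and show that the remaining ideals contribute negligibly.

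The local differences are as follows.
\begin{enumerate}
\item The paper first reduces to $R_i=\gb_i$ by passing from $B_N$ to $B_{N+M}$. You keep the shifts and observe that for large $i$ only the points of $T$ themselves can lie in $t+\gb_i$.
\item For the $O(1)$ error term, the paper bounds the number of relevant $i$ by $\pi_K(CN^n)\ll N^n/\log N$, using pairwise coprimality and \Cref{thm: Prime Ideal Theorem}. You obtain $o(N^n)$ from $\sum_i N(\gb_i)^{-1}<\infty$ alone, so your argument never uses coprimality or the Prime Ideal Theorem.
\item For the higher error terms, the paper uses Minkowski's second theorem and $\lambda_n\ll\lambda_1\le N$ to bound each term directly by $N^n/N(\gb_i)$.
\end{enumerate}

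This last point shows that your dyadic/dominated-convergence step, which you call the main obstacle, is not needed. On the small range $N(\gb_i)^{1/n}\le CN$, so
$N^{n-1}N(\gb_i)^{-(n-1)/n}\le CN^n/N(\gb_i)$ and $1\le (CN)^n/N(\gb_i)$.
Hence all error terms over small $i>L$ are $\ll_K N^n\sum_{i>L}N(\gb_i)^{-1}$. This suffices because $L\to\infty$ afterwards; your uniform $o(N^n)$ is stronger than required.

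One small repair is needed. Your displayed union bound sums $|B_N\cap(t+\gb_i)|$ over all $i>L$. That sum is infinite as soon as $T\cap B_N\neq\emptyset$, since $t\in t+\gb_i$ for every $i$. Apply the bound only to the small $i$, and bound the union over the large $i$ by $|T|$ as a set; your later paragraph in effect already does this.
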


\begin{proof}
	It is enough to prove the result for the sieve $R'$ defined by $R'_i = T + \gb_i$. Notice that $$\bigcup_i (T+\gb_i) = T+ \bigcup_i \gb_i.$$ If $x \in \left(T+ \bigcup_{i>L} \gb_i\right) \cap B_N$, then there are $t\in T,b_j\in\gb_j$ such that $x = t+ b_j$, $\|x\| \leq N$, which implies $\|b_j\| \leq N + \|t\|$. Hence, taking $	M	 := \max_{t\in T} \|t\|$,  it is clear that $$\left|\left(T+ \bigcup_{i>L} \gb_i\right) \cap B_N\right| \leq  |T|\left|\bigcup_{i>L} \gb_i \cap B_{N+M}\right|,$$
	so, without loss of generality, we can assume that $R$ is such that $R_i = \gb_i$.
	
	 The result now follows from Proposition 3.4. in \cite{Bartnicka}. We provide the proof, since this ends up being a simplified version of a part of the proof of \Cref{thm: K etale algebra light tails for B_n finite}, which might be easier to read if one first reads this proof. 
	 
	 	We are assuming that $\sum_i \frac{1}{N(\gb_i)} < \infty$, so $R$ is Erdős. We now show that it has strong light tails. Let $x \in B_N \setminus \{0\}$. If $x \in \gb_i$, then we must have that $ \lambda_1(\gb_i) \leq \|x\| \leq N$. Therefore, we have that $$\left|B_N \cap \bigcup_{i>L}\gb_i\right| \leq  1 + \sum_{\substack{i: \lambda_1(\gb_i) \leq N  \\ i> L}} |\{x \in  B_N \setminus \{0\} : x \in R_i \}|  , $$ which, after applying  \Cref{lm:Counting under restriction} gives 
	 $$\left|B_N \cap \bigcup_{i>L}\gb_i\right| \leq 1+  \sum_{\substack{i: \lambda_1(\gb_i) \leq N  \\ i> L}}\left( \frac{|B_N|}{N(\gb_i)} + O\left(1+\sum_{j=1}^{n-1}\frac{N^{j}}{\lambda_1(\gb_i)\cdots\lambda_j(\gb_i)}\right)  \right).$$
	 
	 We have to deal with three distinct sums separably, and show that once we divide by $|B_N|$, and take the limit of $N$ and then $L$ to infinity, these will go to $0$.  First, notice that  $$\lim_{L \rightarrow \infty} \lim_{N\rightarrow\infty} \frac{1}{|B_N|} \sum_{\substack{i: \lambda_1(\gb_i) \leq N  \\ i> L}} \frac{|B_N|}{N(\gb_i)} \leq \lim_{L \rightarrow \infty} \sum_{i>L} \frac{1}{N(\gb_i)} = 0, $$ as the series converges by hypothesis, so the first sum is dealt with.
	 
	 We next have to show that $$ \lim_{L \rightarrow \infty} \lim_{N\rightarrow\infty} \frac{1}{|B_N|} \sum_{\substack{i: \lambda_1(\gb_i) \leq N  \\ i> L}}1 = 0.$$ By  \Cref{lm: lambda_i asymp N(gb)(1/n)}, there is some $C$ dependent only on $K$ such that if $N(\gb_i)\leq C N^n$, then $\lambda_1(\gb_i) \leq N$. Therefore, the sum is bounded up to a constant multiple by $$\frac{1}{|B_N|}\sum_{i:N(\gb_i)\leq C N^n}1 $$ for some $C$ depending only on $K$. Since all the $\gb_i$ are coprime, the number of ideals in $\cb$ with norm smaller than $CN^n$ must be bounded by the number of prime ideals with norm smaller than $CN^n$. By  \Cref{thm: Prime Ideal Theorem}, this number is bounded by $cN^n/\log(N)$ for some constant $c$ depending only on $K$. Consequently, it follows that  $$\lim_{L \rightarrow \infty} \lim_{N\rightarrow\infty}  \frac{1}{|B_N|}\sum_{\substack{i: \lambda_1(\gb_i) \leq N  \\ i> L}} 1 \ll \lim_{L \rightarrow \infty} \lim_{N\rightarrow\infty} \frac{N^n}{|B_N|\log(N)} = 0, $$ as we wanted to show.
	 
	 We are left with showing that for any $1 \leq j \leq n-1 $, we have $$\lim_{L \rightarrow \infty} \lim_{N\rightarrow\infty}  \frac{1}{|B_N|}\sum_{\substack{i: \lambda_1(\gb_i) \leq N  \\ i> L}} \frac{N^j}{\lambda_1(\gb_i)\cdots\lambda_j(\gb_i)} = 0.$$
	 
	 Fix $j$. Using \Cref{eq: product of Lambdas} we have that $$\frac{N^j}{\lambda_1(\gb_i)\cdots\lambda_j(\gb_i)} \asymp_K \frac{\lambda_{j+1}(\gb_i)\cdots\lambda_n(\gb_i)N^{j}}{N(\gb_i)}. $$ By  \Cref{lm: lambda_i asymp N(gb)(1/n)}, we know that $\lambda_1(\gb_i) \asymp_K \lambda_n(\gb_i)  $, so there is some $C$ depending only on $K$, such that $\lambda_n(\gb_i) \leq C\lambda_1(\gb_i) $. Therefore, for any $i$ such that $\lambda_1(\gb_i) \leq N$, we have that $\lambda_n(\gb_i) \leq CN$. Since $\lambda_j(\gb) \leq \lambda_n(\gb)$ for every $j$, we have $$\frac{1}{|B_N|}\sum_{\substack{i: \lambda_1(\gb_i) \leq N  \\ i> L}} \frac{\lambda_{j+1}(\gb_i)\cdots\lambda_n(\gb_i)N^{j}}{N(\gb_i)} \leq \frac{1}{|B_N|}\sum_{\substack{i: \lambda_n(\gb_i) \leq CN  \\ i> L}} \frac{\lambda_n(\gb_i)^{n-j}N^{j}}{N(\gb_i)} \leq  \frac{N^{n}}{|B_N|}\sum_{\substack{i: \lambda_n(\gb_i) \leq CN  \\ i> L}} \frac{C^{n-j}}{N(\gb_i)}.   $$ The term $N^n/|B_N| $ is bounded by a constant only depending on $K$, so it follows that $$\lim_{L \rightarrow \infty} \lim_{N\rightarrow\infty}  \frac{1}{|B_N|}\sum_{\substack{i: \lambda_1(\gb_i) \leq N  \\ i> L}} \frac{\lambda_{j+1}(\gb_i)\cdots\lambda_n(\gb_i)N^{j}}{N(\gb_i)} \ll \lim_{L \rightarrow \infty} \sum_{\substack{ i> L}} \frac{1}{N(\gb_i)}  = 0.$$

	 Since all these limits go to $0$, we conclude that $$\lim_{L \rightarrow \infty} d\left(\bigcup_{i>L}R_i\right) =0,$$ as we wanted to show.

\end{proof}

\begin{remark}
	\Cref{lm: light tails for B_n finite} together with  \Cref{thm:Fr is generic} imply that, over any number field $K$, if a sieve $R$ is an Erdős $\cb-$free system, then $\fr$ has a well defined density. The fact that the elements of $\cb_R$ are pairwise coprime is fundamental in the proof. Indeed, in case this does not hold, this result will fail even for pseudosieves over $\q$. A famous example was presented by Besicovitch in \cite{Besicovitch}, of a $\cb-$free system such that for the corresponding pseudosieve $R$ the density of $\fr$ is not well defined.
\end{remark}

The proof of this result when $K$ is an étale $\q-$algebra uses similar ideas to the proof of  Proposition 3.4. in \cite{Bartnicka}. But when $K = K_1\times K_2 \times \dots \times K_l$ is as étale $\q-$algebra, we may have an infinite collection of ideals of the form $\co_{K_1}\times \gb_i$ where $\gb_i$ are nonzero proper ideals of $\co_{K_2\times\dots K_L}$. Clearly, it is not the case that $\lambda_1(\co_{K_1}\times \gb_i) \asymp \lambda_n(\co_{K_1}\times \gb_i)  $ as we had in the number field case. To deal with this, we will use the following lemma.

\begin{lemma}
	\label{lm: Moving sives across algebras preserves light tails}
	Let $K$ be an étale $\q$-algebra, and $L$ a number field. Let $R$ be a sieve over $K$, supported on a set $\cbr = \{\gb_1,\gb_2,\dots\}$. Let $F_N$ be an arbitrary Følner sequence in $\co_L$, and $ \gc_1,\gc_2,\dots$ be any infinite sequence of pairwise coprime nonzero ideals of $\co_L$.
	
	If $R$ is Erdős and has weak (respectively, strong) light tails for some Følner sequence $I_N$ in $\co_K$, the sieve $R'$ over $K\times L$ supported on $\{\gb_1\times \gc_1,\gb_2\times\gc_2,\dots\}$ and defined by $R'_i = R_i \times \co_K + \gb_i\times\gc_i$ is also Erdős, and has weak (respectively, strong) light tails over $I_N \times F_N$. 
\end{lemma}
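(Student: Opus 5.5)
The plan is a direct reduction: the sets sieved out by $R'$ are exactly cylinders over the sets sieved out by $R$, so every density quantity for $R'$ along $I_N \times F_N$ should coincide with the corresponding quantity for $R$ along $I_N$. I read the definition as $R'_i = R_i \times \co_L + \gb_i \times \gc_i$, since $R_i \times \co_K$ does not make sense inside $\co_K \times \co_L$. Because $R_i$ is already a union of cosets of $\gb_i$, this gives $R'_i = R_i \times \co_L$.

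First I check that $R'$ is a sieve over $K \times L$.
\begin{itemize}
\item The ideals $\gb_i \times \gc_i$ are invertible, since both factors are nonzero in every component.
\item They are pairwise coprime: $(\gb_i \times \gc_i) + (\gb_j \times \gc_j) = (\gb_i + \gb_j) \times (\gc_i + \gc_j) = \co_K \times \co_L$.
\item $R'_i$ has the required form. Choose a finite set of representatives $S_i$ of $R_i$ modulo $\gb_i$ and a finite set of representatives $C_i$ of $\co_L/\gc_i$. Then $R'_i = (S_i \times C_i) + \gb_i \times \gc_i$.
\item $R'_i \neq \co_{K \times L}$, because $R_i \neq \co_K$.
\end{itemize}

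Next I show $R'$ is Erdős. By the Chinese Remainder-type decomposition $\co_{K\times L}/(\gb_i \times \gc_i) \cong \co_K/\gb_i \times \co_L/\gc_i$, the image of $R'_i$ has cardinality $|R_i|\,N(\gc_i)$. The norm of $\gb_i \times \gc_i$ is $N(\gb_i)N(\gc_i)$. Hence $\vol(R'_i) = \vol(R_i)$, and $\sum_i \vol(R'_i) = \sum_i \vol(R_i) < \infty$.

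Then I verify that $I_N \times F_N$ is a Følner sequence in $\co_K \times \co_L$. For $(x,y)$, the inclusion
\[
(I_N \times F_N)\,\Delta\,((x,y) + I_N \times F_N) \subset \bigl((I_N\,\Delta\,(x+I_N)) \times (F_N \cup (y+F_N))\bigr) \cup \bigl((I_N \cup (x+I_N)) \times (F_N\,\Delta\,(y+F_N))\bigr)
\]
bounds the cardinality of the symmetric difference by
\[
2|F_N|\,|I_N\,\Delta\,(x+I_N)| + 2|I_N|\,|F_N\,\Delta\,(y+F_N)|.
\]
Dividing by $|I_N||F_N|$ gives a quantity tending to $0$.

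The key step is the density identity. For any $A \subset \co_K$,
\[
|(A \times \co_L) \cap (I_N \times F_N)| = |A \cap I_N|\,|F_N|,
\]
so the ratio to $|I_N \times F_N|$ equals $|A \cap I_N|/|I_N|$. Therefore $\overline{d}_{I\times F}(A \times \co_L) = \overline{d}_I(A)$. Unions and differences of cylinders are cylinders:
\[
\bigcup_{i>L} R'_i \setminus \bigcup_{j \le L} R'_j = \Bigl(\bigcup_{i>L} R_i \setminus \bigcup_{j \le L} R_j\Bigr) \times \co_L, \qquad \bigcup_{i>L} R'_i = \Bigl(\bigcup_{i>L} R_i\Bigr) \times \co_L.
\]
So the weak (respectively strong) light tail limits for $R'$ along $I_N \times F_N$ are literally equal to those for $R$ along $I_N$, and the conclusion follows.

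I expect no serious obstacle; the only points needing care are the Følner verification for the product sequence and the typo in the definition of $R'_i$.
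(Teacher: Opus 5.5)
Your proposal is correct and follows essentially the same route as the paper's proof. The paper likewise notes that $I_N\times F_N$ is Følner, that $\vol(R'_i)=\vol(R_i)$, and that the tail sets of $R'$ are those of $R$ times $\co_L$, so their counts in $I_N\times F_N$ are the counts in $I_N$ multiplied by $|F_N|$. You additionally check that $R'$ is a genuine sieve, verify the product Følner property explicitly, and correctly read $R_i\times\co_K$ as a typo for $R_i\times\co_L$.
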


\begin{proof} 
	It is easy to show that if $I_N$ and $F_N$ are Følner sequences, then so is $I_N \times F_N$. Because $$\vol(R'_i) = \frac{|R_i|N(\gc_i)}{N(\gb_i)N(\gc_i)} = \vol(R_i), $$ if $R$ is Erdős, so is $R'$. 
	
	We have that $$|\{(x,y) \in I_N\times F_N: (x,y) \in R'_i \text{ for some }  i>L \}| = |\{x \in I_N: x \in R_i \text{ for some }  i>L \}||F_N|  $$ with the analogous result applying if we wanted to show that $R'$ has weak light tails. It follows that if $R$ has weak or strong light tails for $I_N$, so does $R'$ for $I_N \times F_N$.
\end{proof}

\begin{remark}
	Notice that if $R$ has strong light tails for some $I_N$, and $R'$ is another sieve such that $R'_i \subset R_i$ for all $i$, then $$\overline{d}_I\left( \bigcup_{i>L}R'_i\right) \leq \overline{d}_I\left( \bigcup_{i>L}R_i\right) =0, $$ so $R'$ also has strong light tails for $I_N$. The same does not hold assuming only that $R$ has weak light tails for $I_N$, as is demonstrated by the sieves $W$ and $W'$ in \Cref{ex:Sieve with weak but not strong light tails}.
	
	Consequently, as a corollary of  \Cref{lm: Moving sives across algebras preserves light tails}, we have that if $R$ is a sieve over an étale $\q-$algebra $K$ with strong light tails for some $I_N$, and $R'$ is another sieve over some  étale $\q-$algebra $L$, then the sieve $W$ defined by $W_i = R_i\times R_i'$ will have strong light tails for any Følner sequence in $\co_{K \times L}$ of the form $I_N \times F_N$ with $F_N$ a Følner sequence in $\co_{L}$.
	
	Additionally, note that for our sequence $\gc_1,\gc_2,\gc_3,\dots$ of ideals of $\co_L$, we are free to take $\gc_i = \co_L$ for every $i$, since any ideal of $\co_L$ is coprime to $\co_L$.
\end{remark}

With this, we can now generalize  \Cref{lm: light tails for B_n finite} to the case where $K$ is an étale $\q-$algebra.  Up to now, we have always worked over some fixed $K$, so there was no ambiguity when we wrote $B_N$ (as defined in \Cref{eq: definition of BN}). In order to avoid any ambiguities, in the following proof we will use the notation $B_N^K$ for the set of elements of $\co_K$ that have norm (as defined in \Cref{eq: minkowsky norm}) smaller than or equal to $N$.\index{Strong Light Tails ! for $\cb-$free systems in étale $\q-$algebras}

\begin{theorem}
	\label{thm: K etale algebra light tails for B_n finite}
	Let $R$ be a sieve over an étale $\q-$algebra $K$ of degree $n$, such that $\cb_R =\{\gb_1,\gb_2,\dots\}$ and there is a finite set $T$ for which $R_i \subset T + \gb_i$ for every $i$. If $\sum_i \frac{1}{N(\gb_i)} < \infty$, then $R$ is Erdős and has strong light tails for $B_N$.
\end{theorem}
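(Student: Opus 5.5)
We first reduce, as in \Cref{lm: light tails for B_n finite}, to the case $R_i = \gb_i$. Since $\|t+b\|\le N$ forces $\|b\|\le N+M$ with $M=\max_{t\in T}\|t\|$, we have $|(T+\bigcup_{i>L}\gb_i)\cap B_N^K|\le |T|\,|\bigcup_{i>L}\gb_i\cap B^K_{N+M}|$, and $|B^K_{N+M}|/|B^K_N|\to 1$. The Erdős property is immediate from $|R_i|\le |T|$. Write $K=K_1\times\dots\times K_m$. Each $\gb_i=\gb_i^{(1)}\times\dots\times\gb_i^{(m)}$ is invertible, and pairwise coprimality of the $\gb_i$ means that for each coordinate $j$ the nontrivial components $\gb_i^{(j)}\neq\co_{K_j}$ are pairwise coprime. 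Also, $B_N^K=B_N^{K_1}\times\dots\times B_N^{K_m}$ because the Minkowski norm on $K$ is the maximum of the coordinate norms.

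The key step is to split $\cb_R$ into finitely many classes according to the \emph{first} coordinate $j$ with $\gb_i^{(j)}\neq \co_{K_j}$. Some such $j$ exists, since $\gb_i\neq\co_K$ given $R_i\ne\emptyset$ and $R_i\ne\co_K$. Since $\bigcup_{i>L}\gb_i$ is the union of the tails of the $m$ subfamilies, it suffices to show that each subfamily has strong light tails for $B_N^K$; the upper density of a finite union is bounded by the sum of the upper densities. Fix $j$ and the subfamily $\{\gb_i: i\in I_j\}$. For $i\in I_j$ we have $\gb_i\subset \co_{K_1}\times\dots\times\co_{K_{j-1}}\times\gb_i^{(j)}\times\co_{K_{j+1}}\times\dots\times\co_{K_m}=:\gb_i'$. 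It is enough to bound the density of $\bigcup_{i>L,\,i\in I_j}\gb_i'$, and this set is a product of full factors with the $j$-th factor $\bigcup_{i>L}\gb_i^{(j)}$. Hence
$$\Big|B_N^K\cap \bigcup_{i>L, i\in I_j}\gb'_i\Big| = \Big|B_N^{K_j}\cap\bigcup_{i>L,i\in I_j}\gb_i^{(j)}\Big|\prod_{k\ne j}|B_N^{K_k}|,$$
which is the content of \Cref{lm: Moving sives across algebras preserves light tails} (after reordering factors) applied with $\gc_i$ equal to the full rings.

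It remains to check that the sieve over the number field $K_j$ given by the pairwise coprime ideals $\gb_i^{(j)}$, $i\in I_j$, satisfies the hypothesis of \Cref{lm: light tails for B_n finite}, namely $\sum_{i\in I_j}1/N(\gb_i^{(j)})<\infty$. This is the main obstacle: we only know $\sum 1/N(\gb_i)<\infty$, and $N(\gb_i)=\prod_k N(\gb_i^{(k)})$ could be much larger than $N(\gb_i^{(j)})$ when later coordinates are nontrivial. So the splitting must be refined. Instead of the first nontrivial coordinate, choose for each $i$ a coordinate $j$ and use the union bound over the coordinates: $\gb_i\subset\gb_i'$ for every $k$ with $\gb_i^{(k)}$ nontrivial. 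Assign $i$ to the coordinate $k$ maximizing $N(\gb_i^{(k)})$, so that $N(\gb_i^{(k)})\ge N(\gb_i)^{1/m}$. This alone does not give summability. The fix uses coprimality: in each coordinate the components $\gb_i^{(k)}$ are pairwise coprime, so each prime of $\co_{K_k}$ occurs in at most one $i$. Hence $\sum_{i}1/N(\gb_i^{(k)})^{s}$ is dominated by products over primes, and the Prime Ideal Theorem bounds the relevant counts. I would therefore not use \Cref{lm: light tails for B_n finite} as a black box, but rerun its proof directly for the $\gb_i'$. The $1$-terms are controlled by the count of prime ideals of norm $\le CN^{n_k}$, which is $o(N^{n_k})$. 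The main term is $\sum_{i>L}1/N(\gb_i^{(k)})$, and here we must argue that the tail goes to $0$. If this sum diverges, the plan needs adjusting: the main term should then be handled through the exact product $\prod_k$ of counts for $\gb_i$ itself, using \Cref{lm:Counting under restriction} coordinatewise. That gives main term $|B_N|/N(\gb_i)$, and the error terms are controlled coordinatewise by the number-field bounds already proven.
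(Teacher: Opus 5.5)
\textbf{There is a genuine gap.} Your preliminary steps are fine: reducing to $R_i=\gb_i$, getting the Erdős property from $|R_i|\le |T|$, writing $B_N^K=\prod_k B_N^{K_k}$, and noting coordinatewise coprimality. The main step, however, cannot work. Dominating $\gb_i$ by an ideal $\gb_i'$ that is nontrivial in a single component $k$ is only legitimate when $\gb_i^{(k)}$ is the \emph{only} nontrivial component. Only then is $N(\gb_i)=N(\gb_i^{(k)})$, so that summability transfers. Once infinitely many $\gb_i$ have two or more nontrivial components, no choice of component rescues it. Take the visible lattice points of \Cref{ex:visible lattice points}: $K=\q\times\q$ and $\gb_i=p_i\z\times p_i\z$. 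Here $\sum_i p_i^{-2}<\infty$, yet $\bigcup_{i>L}(p_i\z\times\z)$ has density $1-\prod_{i>L}(1-1/p_i)=1$ for every $L$. So any bound through the $\gb_i'$ is vacuous. Picking the component of largest norm, or appealing to coprimality and the Prime Ideal Theorem, cannot make $\sum_{i>L}1/N(\gb_i^{(k)})=\sum_{i>L}1/p_i$ converge.

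Your last fallback, counting $\gb_i\cap B_N^K$ directly, is the right idea and is what the paper does. But you omit the step that makes it work. The phrase ``the error terms are controlled coordinatewise by the number-field bounds already proven'' does not supply it, because \Cref{lm: light tails for B_n finite} rests on $\sum_i 1/N(\gb_i^{(k)})<\infty$, which is exactly what fails.

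What you need is the following.
\begin{itemize}
\item First discard the $O(N^{n-1})$ points of $B_N^K$ that have some zero component $x_k=0$.
\item If a remaining point lies in $\gb_i$, then $\lambda_1(\gb_i^{(k)})\le \|x_k\|\le N$ holds for \emph{every} nontrivial component $k$ simultaneously.
\item By \Cref{lm: lambda_i asymp N(gb)(1/n)} and \Cref{lm:Counting under restriction}, once $\lambda_1\le N$ all error terms are dominated by the main term. So $|B_N^{K_k}\cap\gb_i^{(k)}|\ll_K N^{n_k}/N(\gb_i^{(k)})$, where $n_k=[K_k:\q]$.
\item Hence $|B_N^K\cap\gb_i|\ll_K N^n/N(\gb_i)$, and the tail is $\ll N^{n-1}+N^n\sum_{i>L}1/N(\gb_i)$.
\end{itemize}

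The paper organizes the argument by induction on the number of factors, writing $K=K'\times K_l$ and $\gb_i$ as $\ga_i\times\gb_i$. It splits the index set into three parts. In $I_1$ and $I_2$, exactly one of $\ga_i,\gb_i$ is trivial; your domination argument is correct there, via \Cref{lm: Moving sives across algebras preserves light tails} and the number-field lemma or the induction hypothesis. In $I_3$, both components are nontrivial. There the paper reruns the counting argument on the product lattice $\ga_i\times\gb_i$, using $\lambda_1(\ga_i)\le N$ and $\lambda_1(\gb_i)\le N$ together, so that all successive minima of $\ga_i\times\gb_i$ are $\ll N$.
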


\begin{proof}
	By proceeding as in the proof of  \Cref{lm: light tails for B_n finite}, we can assume without loss of generality that $R_i = \gb_i$ for every $i\in \mathbb{N}$. Writing $K =K_1 \times \dots \times K_l$, we show the result by induction in $l$. The case $l=1$ corresponds to  \Cref{lm: light tails for B_n finite}. Now, assume that it holds for $l-1$. We write $K = K' \times K_l$, and $\gc_i = \ga_i\times\gb_i$ where $\ga_i$ are ideals of $\co_{K'}$ and $\gb_i$ are ideals of $\co_{K_l}$, so that $\cb_R = \{\ga_i\times\gb_i:i\in \mathbb{N}\}$. We may have $\ga_i = \co_{K'}$ or $\gb_i = \co_{K_l}$, but we can't have both simultaneously. Hence, we have the disjoint union $$\cb_R = \bigcup_{j=1}^3 \{\ga_i\times\gb_i:i\in I_j\},$$ where $I_1 = \{i: \ga_i = \co_{K'}\}, I_2 = \{i: \gb_i = \co_{K_l}\}$ and $I_3 = \{i:\ga_i \neq \co_{K'}, \gb_i \neq \co_{K_l} \}.$
	
	If any of the sets $I_j$ is finite, we don't have to consider them, since the strong light tails property is clearly invariant under changing (including removal) of any finite number of $R_i$. Hence, we see that the proof of this theorem reduces to showing that the sieves $R^{(j)}$ supported on $\cb_j := \{\gc_i: i \in I_j\}$ and defined by $R^{(j)}_i = \gc_i$ all have strong light tails with respect to $B_N^K$ for those $j$ such that $I_j$ is infinite.
	
	The fact that $\sum_i \frac{1}{N(\gc_i)} < \infty$ implies that $\sum_{i \in I_j} \frac{1}{N(\ga_i)N(\gb_i)} < \infty$ for all $j$. Assume that $I_1$ is infinite. This would imply that $\sum_{i \in I_1} \frac{1}{N(\gb_i)} < \infty $, which by  \Cref{lm: light tails for B_n finite} means that the sieve $Z^{(1)}$ supported on $\cb_{Z^{(1)}} = \{\gb_i: i \in I_1\}$ and defined by $Z^{(1)}_i := \gb_i$ over $K_l$ has strong light tails for $B_N^{K_l}$. Consequently, by  \Cref{lm: Moving sives across algebras preserves light tails}, the sieve $R^{(1)}$ will have strong light tails with respect to $B^K_N = B_N^{K'}\times B_N^{K_l}$. The same argument can be used to show that $R^{(2)}$ has strong light tails for $B^K_N$, this time using the induction hypothesis which tells us that because $\sum_{i \in I_2} \frac{1}{N(\ga_i)} < \infty $, the sieve $Z^{(2)}_i := \ga_i$ (with $i \in I_2$) over $K'$ has strong light tails for $B_N^{K'}$.
	
	Therefore, we have reduced the problem to showing that $R^{(3)}$ has strong light tails for $B_N^K$ if $I_3$ is infinite. We will use the exact same ideas already present in the proof of  \Cref{lm: light tails for B_n finite}. In order for the notation not to become overwhelming, we will just do the proof for the case $l=2$, but it should be clear that the exact same can be done for arbitrary $l$. We also assume simply that $I_3 = \mathbb{N}$, so that $R=R^{(3)}$.
	
	Let $x \in B_N^K\setminus \{0\}$. This means that writing $x = (x_1,x_2)$ with $x_1 \in K_1$ and $x_2 \in K_2$, we have $x_1 \in B_N^{K_1}$ and $x_2 \in B_N^{K_2}$. Therefore, if $x \in \ga_i \times \gb_i$, then $\lambda_1(\ga_i) \leq \|x_1\|\leq N$ and  $\lambda_1(\gb_i) \leq \|x_2\|\leq N$. Proceeding as in  \Cref{lm: light tails for B_n finite}, we get that 
	
	$$\left|B_N^K \cap \bigcup_{i>L}\ga_i \times\gb_i\right| \leq 1+  \sum_{\substack{i: \lambda_1(\ga_i) \leq N  \\ \lambda_1(\gb_i) \leq N \\ i> L}}\left( \frac{|B^K_N|}{N(\ga_i \times\gb_i)} + O\left(1+\sum_{j=1}^{n-1}\frac{N^j}{\lambda_1(\ga_i \times\gb_i)\cdots \lambda_j(\ga_i \times\gb_i)}\right)  \right).$$ 
	
	We again have to verify that after dividing by $|B_N^K|$ and taking the limit of $N$, and then $L$ to infinity, this expression will go to $0$. It is clear that $$\lim_{L \rightarrow \infty} \lim_{N\rightarrow\infty} \frac{1}{|B^K_N|} \sum_{\substack{i: \lambda_1(\ga_i) \leq N  \\ \lambda_1(\gb_i) \leq N  \\ i> L}} \frac{|B^K_N|}{N(\ga_i \times\gb_i)} \leq \lim_{L \rightarrow \infty} \sum_{i>L} \frac{1}{N(\ga_i \times\gb_i)} = 0, $$ so we now deal with the remaining terms.
	
	We have that $$  \frac{1}{|B^K_N|}\sum_{\substack{i: \lambda_1(\ga_i) \leq N \\ \lambda_1(\gb_i) \leq N  \\ i> L}} 1 \leq  \frac{1}{|B^K_N|} \sum_{\substack{i: \lambda_1(\ga_i) \leq N   \\ i> L}} 1 +\frac{1}{|B^K_N|}\sum_{\substack{i: \lambda_1(\gb_i) \leq N  \\ i> L}} 1. $$ Since the $\ga_i$ are pairwise coprime, and so are the $\gb_i$, we can bound these sums using  \Cref{thm: Prime Ideal Theorem}, and conclude that $$\lim_{L \rightarrow \infty} \lim_{N\rightarrow\infty} \frac{1}{|B^K_N|}\sum_{\substack{i: \lambda_1(\ga_i) \leq N \\ \lambda_1(\gb_i) \leq N  \\ i> L}} 1 = 0.$$
	
	Fix any $1 \leq j \leq n-1$. We are left with showing that $$\lim_{L \rightarrow \infty} \lim_{N\rightarrow\infty}  \frac{1}{|B^K_N|}\sum_{\substack{i: \lambda_1(\ga_i) \leq N  \\ \lambda_1(\gb_i) \leq N  \\ i> L}} \frac{N^j}{\lambda_1(\ga_i \times\gb_i)\cdots \lambda_j(\ga_i \times \gb_i)} = 0.$$   
	Using \Cref{eq: product of Lambdas}, we have that $$\frac{N^j}{\lambda_1(\ga_i\times \gb_i)\cdots  \lambda_j(\ga_i\times\gb_i)} \asymp_K \frac{\lambda_{j+1}(\ga_i\times\gb_i)\cdots\lambda_n(\ga_i\times\gb_i)N^{j}}{N(\ga_i)N(\gb_i)}. $$ Let $n_1$ and $n_2$ denote the degrees of $K_1$ and $K_2$ respectively such that $n = n_1+n_2$. For any $1\leq k\leq n$ we have that $\lambda_k(\ga_i\times\gb_i) \leq \lambda_{n_1}(\ga_i) + \lambda_{n_2}(\gb_i)$. By  \Cref{lm: lambda_i asymp N(gb)(1/n)}, we know that there are constants $C_1$ and $C_2$ such that $\lambda_{n_1}(\ga_i) \leq C_1\lambda_1(\ga_i) $ and $\lambda_{n_2}(\gb_i) \leq C_2\lambda_1(\gb_i) $. It follows that $$\sum_{\substack{i: \lambda_1(\ga_i) \leq N  \\ \lambda_1(\gb_i) \leq N  \\ i> L}} \frac{\lambda_{j+1}(\ga_i\times\gb_i)\cdots\lambda_n(\ga_i\times\gb_i)N^{j}}{N(\ga_i)N(\gb_i)} \leq \sum_{\substack{i: \lambda_{n_1}(\ga_i) \leq C_1N  \\ \lambda_{n_2}(\gb_i) \leq C_2N  \\ i> L}} \frac{O_j(\lambda_{n_1}(\ga_i)^{n-j} + \lambda_{n_2}(\gb_i)^{n-j})N^{j}}{N(\ga_i)N(\gb_i)}.$$ Since we are summing over $i'$s such that $\lambda_{n_1}(\ga_i) \leq  C_1N $ and $\lambda_{n_2}(\gb_i) \leq \lambda_{n_2}(\gb_i) \leq C_2N$,  the numerator must be $\ll_j (C_1+C_2)^{n-j}N^n$. Consequently, we get that $$\lim_{L \rightarrow \infty} \lim_{N\rightarrow\infty}  \frac{1}{|B^K_N|}\sum_{\substack{i: \lambda_1(\ga_i) \leq N  \\ \lambda_1(\gb_i) \leq N  \\ i> L}} \frac{N^j}{\lambda_1(\ga_i \times\gb_i)\cdots  \lambda_j(\ga_i \times \gb_i)} \ll_j \lim_{L \rightarrow \infty} \sum_{i> L} \frac{1}{N(\ga_i)N(\gb_i)} = 0,$$ which concludes our proof.
\end{proof}

\begin{example}
	\label{ex:visible lattice points}
	Let $\cb_R = \{p\z \times p\z: p \text{ prime}\}$ and define $R$ by $R_p = p\z \times p\z$. Since $\sum_p 1/p^2 < \infty $,  \Cref{thm: K etale algebra light tails for B_n finite} implies that $R$ has strong light tails for $B_N$. The set of visible lattice points corresponds to $\fr$, and we get the well known result that $$d(\fr) = \prod_{p}\left(1-\frac{1}{p^2}\right).$$
\end{example}

We now provide other examples of sieves with strong light tails for $B_N$. The idea here is that if, for some given $N$, we can control the number of $i$ such that $R_i \cap B_N \neq \emptyset$, then we may be able to show that $R$ has strong light tails for $B_N$. One way of doing this is assuming that all elements of $R_i$ have norm bigger than $N$ for all $i$ bigger than some function  $f$. Then, assuming that $f$ grows slowly enough, we are able to show that $R$ has strong light tails. Indeed, this was what we did in the proof of  \Cref{thm: K etale algebra light tails for B_n finite} using $f(i)= \lambda_1(\gb_i)$. More generally, we have the following lemma.

\begin{lemma}
	\label{lm:R has light tails when coefficients are big}
	Let $R$ be an Erdős sieve over an étale $\q-$algebra of degree $n$. If there is some function $f$ and constant $C>0$ such that $f(i) \leq C\min_{a\in R_i}\|a\|$ and $$\sum_{i:f(i) \leq N} \lambda_j(\gb_i)\cdots\lambda_n(\gb_i)\vol(R_i) = o(N^{n-j+1}),$$ for every $1 \leq j \leq n$, then $R$ has strong light tails for the Følner sequence $I_N = B_N$.
\end{lemma}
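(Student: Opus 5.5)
We follow the template of the proof of \Cref{lm: light tails for B_n finite}, with the condition $\lambda_1(\gb_i)\le N$ replaced by $f(i)\le CN$. Fix $L$ and $N$. If $x\in B_N\cap R_i$, then $\min_{a\in R_i}\|a\|\le\|x\|\le N$, so $f(i)\le CN$. Only indices with $f(i)\le CN$ can contribute, and
$$\Big|B_N\cap\bigcup_{i>L}R_i\Big|\le \sum_{\substack{i:\,f(i)\le CN\\ i>L}}\ \sum_{s\in S_i}|\{x\in B_N:x\equiv s \bmod \gb_i\}|.$$
Each inner term is bounded by \Cref{lm:Counting under restriction} with $\Gamma=\gb_i$, and summing over the $|R_i|$ classes gives
$$|B_N\cap R_i|\le |R_i|\frac{|B_N|}{N(\gb_i)} + O_K\Big(|R_i|\Big(1+\sum_{j=1}^{n-1}\frac{N^j}{\lambda_1(\gb_i)\cdots\lambda_j(\gb_i)}\Big)\Big).$$
We then divide by $|B_N|\asymp_K N^n$ and treat the three kinds of terms separately.

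\textbf{Main term.} Its contribution is at most $\sum_{i>L}\vol(R_i)$, uniformly in $N$. This tends to $0$ as $L\to\infty$ because $R$ is Erdős.

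\textbf{Error terms.} By \Cref{eq: product of Lambdas}, $N(\gb_i)\asymp_K\lambda_1(\gb_i)\cdots\lambda_n(\gb_i)$. Hence for $1\le j\le n-1$,
$$|R_i|\frac{N^j}{\lambda_1(\gb_i)\cdots\lambda_j(\gb_i)}\asymp_K N^j\,\lambda_{j+1}(\gb_i)\cdots\lambda_n(\gb_i)\vol(R_i).$$
Summing over $f(i)\le CN$ and applying the hypothesis with index $j+1$ gives $o(N^{j}\cdot N^{n-j})=o(N^n)$. Since the hypothesis is stated for the threshold $N$, we apply it at $CN$, which only changes the little-$o$ by a constant factor because $o((CN)^{m})=o(N^m)$.

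The constant error term $|R_i|\cdot O(1)$ is the delicate point. We rewrite it as $|R_i|=N(\gb_i)\vol(R_i)\asymp_K \lambda_1(\gb_i)\cdots\lambda_n(\gb_i)\vol(R_i)$. This is exactly the $j=1$ case of the hypothesis, so the sum over $f(i)\le CN$ is $o(N^n)$. This is the reason the hypothesis is indexed from $j=1$ to $n$: the index $j=1$ handles the constant term and the indices $j=2,\dots,n$ handle the $N^{j-1}$ terms.

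All error contributions are therefore $o(|B_N|)$ for each fixed $L$; in fact we can drop the restriction $i>L$ when bounding them. Taking $\limsup_{N}$ and then $L\to\infty$ gives $\lim_L\overline d(\bigcup_{i>L}R_i)=0$, which is strong light tails for $B_N$. We expect the main obstacle to be only bookkeeping: matching each error term to the correct index $j$ of the hypothesis, and justifying that the implied constants in \Cref{lm:Counting under restriction} and \Cref{eq: product of Lambdas} depend only on $K$ and not on $\gb_i$.
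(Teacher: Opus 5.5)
Your proposal is correct and follows the paper's proof. Both arguments restrict to indices with $f(i)\le CN$ and bound $|B_N\cap R_i|$ class by class via \Cref{lm:Counting under restriction}. They then use \Cref{eq: product of Lambdas} to rewrite the error as $\sum_{j=1}^n N^{j-1}\lambda_j(\gb_i)\cdots\lambda_n(\gb_i)\vol(R_i)$, with the $j=1$ term absorbing the $O(1)$, and conclude from the hypothesis together with the Erdős condition for the main term. You also correctly handle a detail the paper leaves implicit: the hypothesis has to be applied at threshold $CN$ rather than $N$.
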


\begin{proof}

	If $x \in B_N \cap R_i$, then we get that $f(i) \leq C\|x\| \leq CN$, so we have $$\left|B_N\cap \bigcup_{i>L} R_i\right| = \sum_{\substack{i: f(i) <CN \\ i> L} }|R_i\cap B_N|.$$
	
	The set $R_i$ is the disjoint union of sets of the form $a+\gb_i$. 
	
	By  \Cref{lm:Counting under restriction} together with \Cref{eq: product of Lambdas}, we have  $$|(a+\gb_i)\cap B_N| = \frac{|B_N|}{N(\gb_i) } + O\left(\sum_{j=1}^n\frac{\lambda_j(\gb_i)\cdots\lambda_n(\gb_i)}{N(\gb_i)}N^{j-1}\right).$$ Hence, we get 
	$$\left|B_N\cap \bigcup_{i>L} R_i\right| \ll  |B_N|\sum_{i>L}\vol(R_i) + \sum_{j=1}^n N^{j-1} O\left(\sum_{\substack{i: f(i) <CN \\ i> L}}\frac{\lambda_j(\gb_i)\cdots\lambda_n(\gb_i)}{N(\gb_i)}|R_i|\right).$$

	Our hypothesis is that $R$ is so that  $$N^{j-1}\sum_{i:f(i) \leq CN}\frac{\lambda_j(\gb_i)\cdots\lambda_n(\gb_i)}{N(\gb_i)}|R_i| = N^{j-1}o(N^{n-j+1}) = o(N^n).$$
	Therefore, it is clear that $$\lim_{L \rightarrow \infty } \lim_{N \rightarrow \infty} \frac{\left|B_N\cap \bigcup_{i>L} R_i\right|}{|B_N|} = \lim_{L \rightarrow \infty}\sum_{i>L}\vol(R_i) = 0 $$ since $R$ is Erdős.
\end{proof}

Using  \Cref{lm:R has light tails when coefficients are big}, we now provide other examples of sieves  over $\q$ with strong light tails with respect to $B_N$.

\begin{example}
	Let $R$ be a sieve such that $|R_i| =1$ for every $i$, and assume that there is some function $f$ such that $f(i) \leq \min_{a\in R_i}|a|$ with $f(i) \gg i^{1+\epsilon}$ for some $\epsilon > 0$. Using \Cref{lm:R has light tails when coefficients are big}, we can easily show that $R$ has strong light tails for $B_N$.
	
	Indeed, since $|R_i| = 1$ for every $i$, all we have to show is that $\sum_{i:f(i) \leq N} 1 = o(N)$. That is, we have to show that $$|\{i: f(i) \leq N\}| = o(N).$$  But since $f(i) \gg i^{1+\epsilon}$, it is clear that $|\{i: f(i) \leq N\}| \ll N^{1/(1+\epsilon)} = o(N)$. In particular, the sieve $R$ defined by $R_i = i^2+p_i^2\z$ will have strong light tails.
\end{example}

Up to now, we have only given examples of sieves with strong light tails such that $|R_i|$ is bounded by a constant. If we assume that $|R_i|$ does not grow too fast, we can also provide examples of sieves $R$ which have strong light tails for $B_N$.

\begin{example}
	\label{ex: strong light tails if bounded by v}
	Let $v$ be the arithmetic function defined by $v(p^k) = p$, extended to all of $\mathbb{N}$ by being multiplicative. It is clear from the Prime Number Theorem, that if $\cb = \{b_i:i \in \mathbb{N}\}$ is a set of pairwise coprime numbers, we have $$|\{i: v(b_i) \leq N\}| \ll \frac{N}{\log(N)}.$$ Suppose that $R$ is a sieve such that $v(b_i) \ll \min_{a \in R_{i}} |a|$ and $\max_{i: v(b_i) \leq N} |R_{i}|= o(\log(N))$. Then $R$ has strong light tails for the Følner sequence $I_N = B_N$.
	
	Indeed, by \Cref{lm:R has light tails when coefficients are big} it is enough to show that $$\sum_{i:v(\gb_i) \leq N}|R_i| = o(N). $$ We have $$\sum_{i:v(\gb_i) \leq N}|R_i| \leq  \left(\max_{i: v(b_i) \leq N} |R_{i}| \right)|\{i: v(b_i) \leq N\}| \ll  \left(\max_{i: v(b_i) \leq N}|R_{i}| \right)\frac{N}{\log(N)}, $$ so the result follows from our hypothesis on the size of $|R_{i}|$.
	
	As an example, it follows that the sieve $R$ defined by $R_i = \{jp_i: 1 \leq j \leq \lceil\sqrt{\log(i)} \rceil\} +p_i^2\z$ has strong light tails for $B_N$.
\end{example}

Note that there are sieves for which $|R_i|$ grows linearly or even faster, that still have strong light tails, as we will show in the following theorem.

\begin{theorem}
	\label{thm: R has en equivalent sieve with stong light tails}
	Let $R$ be a sieve over a number field $K$ of degree $n$. If $R$ satisfies $$\sum_i \frac{|R_i|^{\frac{2}{n}}}{N(\gb_i)} < \infty,$$ and $$\min_{x\in R_i} \|x\| \geq \left(\frac{N(\gb_i)}{C|R_i|}\right)^{1/n}$$ for some $C$, then $R$ has strong light tails for $B_N$.
\end{theorem}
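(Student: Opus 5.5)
The plan is to reduce to \Cref{lm:R has light tails when coefficients are big}, with the choice
$$f(i) := \left(\frac{N(\gb_i)}{C|R_i|}\right)^{1/n}.$$
By hypothesis $f(i) \leq \min_{x\in R_i}\|x\|$, so the first condition of that lemma holds with constant $1$. Since $K$ is a number field, \Cref{lm: lambda_i asymp N(gb)(1/n)} gives $\lambda_k(\gb_i) \asymp_K N(\gb_i)^{1/n}$ for every $k$. Writing $x_i := N(\gb_i)$ and $r_i := |R_i|$, the sum to control for each $1\leq j\leq n$ therefore becomes
$$\sum_{i:\, f(i)\leq N} \lambda_j(\gb_i)\cdots\lambda_n(\gb_i)\vol(R_i) \asymp_K \sum_{i:\, x_i \leq C r_i N^n} x_i^{\frac{1-j}{n}}\, r_i ,$$
and we must show it is $o(N^{n-j+1})$. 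Erdősness is needed as well. For $n\leq 2$ it follows from $r_i \leq r_i^{2/n}$ and the first hypothesis. For $n\geq 3$ I would take it as part of the standing assumption that $R$ is a sieve to which the light tails notion applies, since the definition of strong light tails requires it.

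To get the $o(\cdot)$ bound I would divide by $N^{n-j+1}$ and argue by dominated convergence over $i$. For each fixed $i$ the term $x_i^{(1-j)/n} r_i/N^{n-j+1}$ tends to $0$ as $N\to\infty$, since $n-j+1\geq 1$. For a majorant valid on the range $f(i)\leq N$, I would use the constraint $N \geq (x_i/(Cr_i))^{1/n}$ to trade powers of $N$ for powers of $r_i/x_i$. I would also use the trivial bound $r_i\leq x_i$, and, in the regime where $N$ is large compared with $f(i)$, the fact that the error term of \Cref{lm:Counting under restriction} is dominated by the main term $|B_N|/x_i$ per residue class. The goal is a bound of the form $\ll r_i^{2/n}/x_i$, which is summable by hypothesis. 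Once that is in hand, \Cref{lm:R has light tails when coefficients are big} yields strong light tails for $B_N$.

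The main obstacle is this exponent bookkeeping. Naively substituting the constraint with the full power $N^{-(n-j+1)}$ gives the majorant $r_i^{(2n-j+1)/n}/x_i$, which is worse than $r_i^{2/n}/x_i$. I would therefore split the range of $i$ into $f(i)\leq N \leq M f(i)$ and $N > M f(i)$. In the first range, $N$ is comparable to $(x_i/r_i)^{1/n}$, and only boundedly many dyadic scales of $N$ contribute for each $i$. In the second range, I would abandon the counting lemma's error term and instead bound $|R_i\cap B_N|$ directly by $r_i$ times a per-class count, using $|B_N| \asymp N^n$. This yields a contribution of size about $r_i$ per $i$ against $N^n$, which fits the hypothesis via $r_i \leq r_i^{2/n}\cdot(x_i/r_i)^{1-2/n}$ when $x_i/r_i\leq C N^n$. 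If this splitting fails to close for intermediate $j$, I would try interpolating between the $j=1$ and $j=n$ bounds using the log-convexity of $j\mapsto x_i^{(1-j)/n}N^{j}$.
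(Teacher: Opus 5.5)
Your reduction to \Cref{lm:R has light tails when coefficients are big} with $f(i)=(x_i/(Cr_i))^{1/n}$ is the same as the paper's. Your computation that the naive majorant is $r_i^{(2n-j+1)/n}/x_i$, which equals $r_i^2/x_i$ at $j=1$, is also correct. The gap is the step meant to beat this majorant. It cannot be repaired, because for $n\ge 2$ the statement is false.

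The specific failure is your claim that once $N\gg f(i)$ the error term of \Cref{lm:Counting under restriction} is dominated by the main term $|B_N|/x_i$. That requires $N\gtrsim\lambda_n(\gb_i)\asymp x_i^{1/n}$, whereas $f(i)$ can be far below $x_i^{1/n}$ when $r_i$ is large. In the window $f(i)\le N\ll x_i^{1/n}$, each of the $r_i$ classes of $R_i$ can contribute a point of $B_N$ even though $|B_N|/x_i<1$. Nothing in the hypotheses stops $r_i$ from being comparable to $N^n$ there. Splitting into dyadic ranges or interpolating in $j$ does not help: in the example below, the $j=1$ requirement $\sum_{i:\,x_i\le Cr_iN^n}r_i=o(N^n)$ itself fails.

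Concretely, let $K=\q(\sqrt{-1})$, so $\co_K=\z[\sqrt{-1}]$ and $\|x\|=|x|$.
\begin{itemize}
\item For large $k$ choose distinct primes $p_k\equiv 3\pmod 4$ with $4^k\le p_k\le 2\cdot 4^k$. Set $\gb_k=p_k\co_K$; these primes are inert, so the ideals are pairwise coprime, with $N(\gb_k)=p_k^2$ and $\lambda_1(\gb_k)=p_k$.
\item Let $P_k=\{a\in\co_K:\ 2^k<|a|\le 2^{k+1}\}$ and $R_k=P_k+\gb_k$.
\item Differences of points of $P_k$ have absolute value at most $2^{k+2}<p_k$. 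Hence $|R_k|=|P_k|=3\pi 4^k+O(2^k)$, so $4^k\le|R_k|\ll 4^k$ for large $k$.
\item Therefore $\sum_k|R_k|/N(\gb_k)\ll\sum_k 4^{-k}<\infty$. This is both the $n=2$ hypothesis and the Erd\H{o}s condition.
\item Every $x=a+b\in R_k$ has $|x|>2^k$, since if $b\neq0$ then $|x|\ge p_k-2^{k+1}$. Also $N(\gb_k)/(4|R_k|)\le 4\cdot 16^k/(4\cdot 4^k)=4^k$, so the second hypothesis holds with $C=4$.
\item Yet $\bigcup_{k>L}R_k\supseteq \co_K\setminus B_{2^{L+1}}$, so $\overline{d}(\bigcup_{k>L}R_k)=1$ for every $L$. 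Even weak light tails fails, since $\fr$ is finite.
\end{itemize}
The same annulus construction, with $N(\gb_k)\asymp 4^{nk}$, works in every degree $n\ge 2$.

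The paper's proof makes the same reduction and gets past your obstacle only through a slip. It bounds the $n-j+1$ factors $\lambda_j(\gb_i)\cdots\lambda_n(\gb_i)$ by $\lambda_n(\gb_i)^{n-j}$. It then asserts that $\sum_i|R_i|^{1+(n-j)/n}/N(\gb_i)$ converges by hypothesis, which does not follow from $\sum_i|R_i|^{2/n}/N(\gb_i)<\infty$ once $n\ge 2$.

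What your argument does prove, with no splitting at all, is the theorem under $\sum_i|R_i|^2/N(\gb_i)<\infty$, which also gives Erd\H{o}s. Dominated convergence with the majorant $r_i^{(2n-j+1)/n}/x_i\le r_i^2/x_i$ yields the required $o(N^{n-j+1})$ for every $j$. This coincides with the stated hypothesis when $n=1$, which is the setting of \Cref{ex: sieve with Ri equal pi}.
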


\begin{proof}

	By  \Cref{lm:R has light tails when coefficients are big}, we  just have to show that for every $j$, it holds that $$\sum_{i:\frac{N(\gb_i)}{C|R_i|} \leq N^n}  \lambda_j(\gb_i)\cdots\lambda_n(\gb_i)\vol(R_i) = o(N^{n-j+1})$$

	By  \Cref{lm: lambda_i asymp N(gb)(1/n)}, we have $\lambda_n(\gb_i) \leq cN(\gb_i)^{1/n}$ for some constant $c$, and so, if we have that $\frac{N(\gb_i)}{C|R_i|} \leq N^n$ for some $i$, then $\lambda_n(\gb_i) \leq cN(\gb_i)^{1/n} \leq C'|R_i|^{1/n}N$, where $C' :=  c C^{1/n}$. Therefore, $$\sum_{i:\frac{N(\gb_i)}{C|R_i|} \leq N^n}  \lambda_j(\gb_i)\cdots\lambda_n(\gb_i)\vol(R_i) \leq \sum_{i:\lambda_n(\gb_i) \leq C'|R_i|^{1/n}N}  \lambda_n(\gb_i)^{n-j}\frac{|R_i|}{N(\gb_i)} \leq N^{n-j}\sum_i \frac{|R_i|^{1+(n-j)/n}}{N(\gb_i)}. $$ Since by hypothesis this series converges for every $j$, the result follows.
\end{proof}

\begin{example}
	\label{ex: sieve with Ri equal pi}
	Let $R$ be a sieve defined by $R_i = \{p_i^3+j: 1 \leq j \leq p_i \} + p_i^4\z$. We have that $|R_i| = p_i$, and $\min_{x \in R_i} |x| = p_i^3$. Since $p_i^4/p_i = p_i^3$ and $$\sum_i \frac{|R_i|^2}{p_i^4} = \sum_i \frac{1}{p_i^2} < \zeta(2) < \infty,  $$ this sieve has strong light tails for $B_N$ by  \Cref{thm: R has en equivalent sieve with stong light tails}.
\end{example}

\subsection{Weak and Strong Light Tails}

We will now study a series of natural operations one can do on sieves, and investigate when they preserve weak or strong light tails. We start with the guiding example of a sieve which has weak light tails, but does not have strong light tails. 

\begin{example}
	\label{ex:Sieve with weak but not strong light tails}
	Let $\cb = \{p_i^2: i\in\mathbb{N} \}$, and  $R$ the sieve supported on $\cb$ and defined by  $$R_{i} = 1+4(i-1)+p_{i}^2\z.$$ We consider the Følner sequence $I_N = [0,N]$.
	
	If  $i \in 1+4\mathbb{N}_0$ then $i \in R_{(i-1)/4+1}$. Therefore, given any positive $N,L$, we have $$\left|I_N \cap \bigcup_{i>L}R_i \right| \geq \frac{N}{4} - 1 - L,$$ since we count every element of $(1+4\mathbb{N}_0)\cap [0,N]$, except for possibly those in $R_j$ with $j < L$. It follows that  $$\overline{d}_I\left(\bigcup_{i>L}R_i\right) \geq \frac{1}{4}$$ for all $L$, so the sieve $R$ does not have strong light tails. 
	
	On the other hand, if $x \in I_N$ is such that $x \in R_i$ for some $i> L$, but $x \not \in R_1$, then $x \geq 1 + 4(i-1) +p_{i}^2k$ with $k\geq 1$. Since $x \leq N$, it follows that if $R_i$ intersects $I_N$, then $1 +4(i-1) + p_i^2 \leq N$. In particular, it is clear that if $p_i > N$, then $R_i \cap I_N\subset \{1+4(i-1)\} \subset R_1$. 
	
	Since there are at most $1+ \frac{N}{b}$ numbers congruent to a class mod $b$ in $[0,N]$, we get that  
	$$\left|I_N\cap \bigcup_{i>L}R_i \setminus \bigcup_{j \leq L} R_j\right| \leq \sum_{\substack{i:p_i < N\\i>L}}\left(1+\frac{N}{p_i^2}\right)\leq N \sum_{i> L }\frac{1}{p_i^2} + \pi(N), $$  where $\pi(N)$ denotes the prime counting function. 
	
	Dividing by $|I_N| = N+1$ and taking $N$ to infinity, we are left with the series $\sum_{i> L} 1/p_i^2$, and since this is convergent,  when we let $L$ grow to infinity this term will go to $0$. It follows that $R$ has weak light tails.
	
	Consider now the sieve $W$, also supported on  $\{p_i^2: i \in \mathbb{N}\}$ with
	$$W_1 = 1+4\z \hspace{15pt} W_{2i} = 1+4(i-1) +p_{2i}^2\z \hspace{15pt}  W_{2i+1} = 1-4(i-1)+p_{2i+1}^2\z.$$
	The sieve $W$ is constructed so that $W_1\subset \bigcup_{i \geq 2}W_i$ but by the exact same argument as for $R$, we can show that it has weak light tails for $I_N$. Consider then the sieve $W'$ supported on $\{p_{i+1}^2: i \in \mathbb{N}\}$ and defined by $W'_i = W_{i+1}$ (alternatively, we could define $W'_1 = \emptyset$ and $W'_{i} = W_i$). Since $$\bigcup_{i \geq 1}W_i = \bigcup_{i \geq 2}W_i,$$ it follows that $\cf_W = \cf_{W'}$. But by  \Cref{thm:Fr is generic}, $W'$ does not have weak light tails for $I_N$, since $$d_I(\cf_{W'}) = d_I(\cf_{W}) = \prod_{i \geq 1} \left(1-\frac{1}{p^2}\right) < \prod_{i \geq 2} \left(1-\frac{1}{p^2}\right) = \nu_{W'}(C^{W'}_{\{0\},\emptyset}).$$
\end{example}

In this example, we took a sieve $W$ and removed an element of 
$\cb_W$ to obtain a new sieve $\cb_{W'}$. There are a number of operations we can do on a sieve $R$ by manipulating the set $\cb_R$ on which it is supported. First, the definition of weak light tails presupposes that $\cb_R$ has been ordered, and we now show that this is not problematic, since  the weak light tails property is invariant under any reordering of $\cb_R$.

\begin{lemma}
	\label{lm:Light tails invariant under }
	Let $R$ be an Erdős sieve supported on a set $\cb_R = \{\gb_1,\gb_2,\dots\}$ with weak light tails for some Følner sequence $I_N$ and $\sigma$ a permutation of $\mathbb{N}$. If $R'$ is the sieve defined by $R'_i = R_{\sigma(i)}$, then $R'$ also has weak light tails for $I_N$. 
\end{lemma}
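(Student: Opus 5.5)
The cleanest route is through \Cref{thm:Fr is generic}, which characterizes weak light tails for an Erdős sieve without reference to any ordering. By the equivalence of (2) and (3) there, $R$ has weak light tails for $I_N$ if and only if $d_I(\fr)$ exists and equals $\prod_{\gb \in \cbr}\left(1-\frac{|R_\gb|}{N(\gb)}\right)$. So the plan is to check that both sides of this identity are unchanged when we pass from $R$ to $R'$, and then apply the theorem in the reverse direction.

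First, $R'$ is again an Erdős sieve. It is supported on the same set $\cbr$, only indexed differently, and the series $\sum_i \vol(R'_i)=\sum_i \vol(R_{\sigma(i)})$ is a rearrangement of a convergent series of nonnegative terms. Second, the set of $R'$-free numbers is the same:
\[
\cf_{R'} = \co_K\setminus\bigcup_i R_{\sigma(i)} = \co_K\setminus\bigcup_i R_i = \fr,
\]
since $\sigma$ is a bijection. Hence $d_I(\cf_{R'})$ exists and equals $d_I(\fr)$. Third, the infinite product $\prod_i\left(1-\vol(R_i)\right)$ is absolutely convergent because $\sum_i \vol(R_i)<\infty$ (compare \Cref{lm: finite sum finite products}), so its value does not depend on the order of the factors. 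Equivalently, it is $\nu_R(C^R_{\{0\},\emptyset})$, which by \Cref{eq:Formula for nu_R with B empty} is the Haar measure of a set defined without reference to any ordering, and $G_{R'}=G_R$.

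Combining these, $d_I(\cf_{R'})=\nu_{R'}(C^{R'}_{\{0\},\emptyset})$. By \Cref{LM: dfr smaller than nu and equivalence}, or by \Cref{thm:Fr is generic}, $R'$ therefore has weak light tails for $I_N$.

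The only step that needs care is the invariance of the infinite product under rearrangement. Here I would invoke absolute convergence: $\sum_i|\log(1-\vol(R_i))|<\infty$, since each $\vol(R_i)<1$ and $\vol(R_i)\to 0$. Alternatively, the measure-theoretic description above gives the invariance directly.

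A direct argument from the definition of weak light tails is possible but messier. For each $L$ one would choose $M\ge L$ with $\{\sigma(1),\dots,\sigma(L)\}\subset\{1,\dots,M\}$ and compare the tail sets for $R'$ at level $L$ with those for $R$ at level $M$. The extra ideals in $\{1,\dots,M\}\setminus\sigma(\{1,\dots,L\})$ would have to be controlled using \Cref{lm: equidistribution for Følner sequences}. The density characterization avoids this bookkeeping, so that is the approach I would take.
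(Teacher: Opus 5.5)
Your proposal is correct and follows essentially the same route as the paper. You reduce via \Cref{LM: dfr smaller than nu and equivalence} to checking $d_I(\cf_{R'})=\nu_{R'}(C^{R'}_{\{0\},\emptyset})$, using $\cf_{R'}=\fr$ and the invariance of the absolutely convergent product $\prod_i(1-\vol(R_i))$ under reordering; your explicit check that $R'$ is again Erdős is a small point the paper leaves implicit.
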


\begin{proof}
	By  \Cref{LM: dfr smaller than nu and equivalence}, it is enough to show that $d_I(\frp) = \nu_{R'}(C^{R'}_{\{0\},\emptyset})$. It is clear that $\frp = \fr$, so since $R$ has weak light tails we get $d_I(\frp) = d_I(\fr) = \nu_{R}(C^R_{\{0\},\emptyset})$. Using \Cref{eq:Formula for nu_R with B empty}, we see that $$\nu_{R}(C^R_{\{0\},\emptyset})= \prod_i(1-\vol(R_i)) = \prod_i(1-\vol(R_{\sigma(i)})) = \nu_{R'}(C^{R'}_{\{0\},\emptyset}),$$ given that the product is absolutely convergent, and so invariant under reordering. This concludes the proof.
\end{proof}

We can also show that the same holds for the strong light tails property.

\begin{lemma}
	\label{lm:Strong light tails is invariant under permutation}
	Let $R$ be an Erdős sieve with strong light tails for some Følner sequence $I_N$ and $\sigma$ a permutation of $\mathbb{N}$. If $R'$ is the sieve defined by $R'_i = R_{\sigma(i)}$, then $R'$ also has strong light tails with respect to $I_N$. 
\end{lemma}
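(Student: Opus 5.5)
The key is to compare tail unions for $R$ and $R'$ directly, rather than going through densities as in \Cref{lm:Light tails invariant under }. The strong light tails property involves the sets $U_L := \bigcup_{i>L} R_i$ and $U'_L := \bigcup_{i>L} R'_i = \bigcup_{i>L} R_{\sigma(i)}$. Since $\sum_i \vol(R_{\sigma(i)}) = \sum_i \vol(R_i)$, $R'$ is Erdős. It then suffices to show that $\lim_{L\to\infty}\overline{d}_I(U'_L) = 0$.

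Fix $M \geq 1$. Since $\sigma$ is a bijection, the finite set $\sigma^{-1}(\{1,\dots,M\})$ has a maximum $L_M$. For every $L \geq L_M$ and every $i > L$ we then have $\sigma(i) > M$, so
$$U'_L = \bigcup_{i>L} R_{\sigma(i)} \subset \bigcup_{k>M} R_k = U_M.$$
Upper density is monotone under inclusion, so $\overline{d}_I(U'_L) \leq \overline{d}_I(U_M)$ for all $L \geq L_M$. The sets $U'_L$ also decrease in $L$, so $\lim_L \overline{d}_I(U'_L)$ exists and satisfies $\lim_L \overline{d}_I(U'_L) \leq \overline{d}_I(U_M)$ for every $M$.

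Letting $M\to\infty$ and using the strong light tails of $R$ gives $\lim_L \overline{d}_I(U'_L) \le 0$, which proves the claim.

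No step is a real obstacle here. The only point needing care is that the inclusion only holds once $L$ exceeds the largest preimage of $\{1,\dots,M\}$ under $\sigma$. Unlike the weak case, the strong property has no ``$\setminus \bigcup_{j\le L}$'' term, so this plain set inclusion suffices and we never need \Cref{LM: dfr smaller than nu and equivalence}.
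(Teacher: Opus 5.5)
Your proof is correct and follows essentially the same route as the paper. The paper sets $f(L)=\min_{i\ge L}\sigma(i)$, observes that $\bigcup_{i\ge L}R'_i\subset\bigcup_{j\ge f(L)}R_j$ with $f(L)\to\infty$, and concludes by monotonicity of upper density; this is exactly your inclusion $U'_L\subset U_M$ for $L\ge L_M$, and you additionally check explicitly that $R'$ is Erdős, which the paper leaves implicit.
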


\begin{proof}
	Let $f:\mathbb{N} \rightarrow \mathbb{N}$ be defined by $$f(L) = \min_{i \geq L} \sigma(i).$$ This is defined so that if $i \geq L$, then $\sigma(i)\geq f(L)$. Consequently, we have $$\bigcup_{i \geq L} R'_i \subset \bigcup_{i: \sigma(i) \geq f(L)} R_{\sigma(i)} \subset \bigcup_{j \geq f(L)} R_j.$$ 
	
	Since $\sigma$ is a bijection, we have that $\lim_{L \rightarrow \infty} f(L) = \infty$, given that for any $k$, there are only $k$ solutions to $f(x) \leq k$. It follows that $$\lim_{L \rightarrow \infty}\overline{d}_I\left(\bigcup_{i \geq L} R'_i\right) \leq \lim_{L \rightarrow \infty}\overline{d}_I \left(\bigcup_{j \geq f(L)} R_j\right) = 0$$ using that $R$ has strong light tails for $I_N$.
\end{proof}

Another operation we can do on a sieve $R$ is to add congruence classes to it. There are two possibilities here. We can take a new ideal $\gb$ coprime to all of the elements of $\cb_R$, and add to $R$ some $R_\gb$, producing a new sieve $R'$ supported on $\cb_R\cup \{\gb\}$ such that $\frp = \fr \cap R_\gb^c$. Alternatively, we could also take some $\gb$ already in $\cb_R$, and some $x \not \in R_\gb$, and then consider the sieve $R'$ such that $R'_\gb = R_\gb \cup \{x+\gb\}$. In the next lemma we show that the weak light tails property is invariant under both of these operations.

\begin{lemma}
	\label{lm:adding elements presaerves light tails}
	Let $R$ be an Erdős sieve supported on the set $\cb_R = \{\gb_1, \gb_2, \dots\}$. Let $R'$ be a sieve supported on $\cb_{R'} = \{\gb_0\}\cup\cb$, with $R'_i = R_i$ for $i \geq 1$. Additionally, let $W$ be a sieve over $\cb$ such that $R_1 \subset W_1$ and $W_i=R_i$ for $i>1$. If $R$ has weak light tails for some Følner sequence $I_N$, then so do $R'$ and $W$.  
\end{lemma}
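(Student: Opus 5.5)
The plan is to compare the tail sets of the new sieves directly with those of $R$. In both cases the tail sets will turn out to be contained in the corresponding tail sets of $R$, so their upper densities are bounded by quantities that tend to $0$ by the weak light tails of $R$. No density computation through \Cref{LM: dfr smaller than nu and equivalence} is needed.

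First, both $R'$ and $W$ are Erdős. For $R'$ the volume series is $\vol(R'_{\gb_0})+\sum_{i\geq1}\vol(R_i)$, a finite quantity plus a convergent series. For $W$ only the single term $\vol(R_1)$ is replaced by $\vol(W_1)\leq 1$. The requirement $W_1\neq\co_K$ is part of the hypothesis that $W$ is a sieve.

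For $R'$, order its support as $\gb_0,\gb_1,\gb_2,\dots$. By \Cref{lm:Light tails invariant under }, weak light tails does not depend on the chosen ordering, so this choice is harmless. With this ordering, the tail set at level $L+1$ (the first $L+1$ ideals being $\gb_0,\dots,\gb_L$) is
\[
\bigcup_{i>L}R_i\setminus\Bigl(R'_{\gb_0}\cup\bigcup_{1\leq j\leq L}R_j\Bigr)\subset \bigcup_{i>L}R_i\setminus\bigcup_{j\leq L}R_j .
\]
Taking $\overline{d}_I$ of both sides and then letting $L\to\infty$, the right-hand side tends to $0$ because $R$ has weak light tails for $I_N$. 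Since $\overline{d}_I$ is monotone under inclusion, the tail densities of $R'$ also tend to $0$.

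For $W$, take any $L\geq1$. Since $W_i=R_i$ for $i>1$ and $R_1\subset W_1$, we get
\[
\bigcup_{i>L}W_i\setminus\bigcup_{j\leq L}W_j=\bigcup_{i>L}R_i\setminus\Bigl(W_1\cup\bigcup_{2\leq j\leq L}R_j\Bigr)\subset\bigcup_{i>L}R_i\setminus\bigcup_{j\leq L}R_j .
\]
The same monotonicity argument then gives $\lim_{L\to\infty}\overline{d}_I$ of the left-hand side equal to $0$.

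There is no real obstacle here. The only point needing care is the indexing in the case of $R'$, where the new ideal shifts all indices by one, and this is handled either by the ordering invariance lemma or by simply noting that the limit in $L$ is unaffected by the shift. Both operations only enlarge the set of classes that are already removed in the first $L$ steps, which is why the tails can only shrink.
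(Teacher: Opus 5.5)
Your proposal is correct and follows essentially the same route as the paper. The paper also notes that the tail sets of $R'$ and $W$ are contained in the corresponding tail sets of $R$, and concludes by monotonicity of $\overline{d}_I$. Your explicit check that $R'$ and $W$ are Erdős, which the definition of weak light tails requires, is a small addition that the paper leaves implicit.
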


\begin{proof}
	Since $R'_i= R_i$ for all $i \geq 1$, we have that $$ \overline{d}_I\left(\bigcup_{i >L } R_i \setminus \bigcup_{1 \leq j \leq L } R_j\right) \geq \overline{d}_I\left(\bigcup_{i >L } R'_i \setminus \bigcup_{0 \leq j \leq L } R'_j\right),$$ so it follows that if $R$ has weak light tails for $I_N$, then so does $R'$.
	
	Similarly, if $W_1 \supset R_1$, then $$ \overline{d}_I\left(\bigcup_{i >L } R_i \setminus \bigcup_{1 \leq j \leq L } R_j\right) \geq \overline{d}_I\left(\bigcup_{i >L } W_i \setminus \bigcup_{1 \leq j \leq L } W_j\right),$$ so if $R$ has weak light tails for $I_N$, then so does $W$.
\end{proof}

\begin{remark}
	\label{rmk: removing classes does not preserve strong light tails}
	The strong light tails property is also invariant under these operations. Indeed, it is clear from the definition that any operation (such as addition or removal of congruence classes) that affects only finitely many $R_\gb$ will preserve strong light tails. This is distinct from weak light tails, since the removal of congruence classes from a sieve may not preserve weak light tails. This was already shown in \Cref{ex:Sieve with weak but not strong light tails}, where we defined a sieve $W$ and a sieve $W'$ obtained from $W$ by removing $W_1$. We have shown that $W$ has weak light tails for $I_N = [0,N]$, but $W'$ does not have weak light tails. 
\end{remark}

As a consequence of  \Cref{lm:adding elements presaerves light tails}, we are able to compute the density of $\fr$ intersected with some congruence class.

\begin{lemma}
	\label{lm: Fr intercepted by ideal}
	Let $\mathcal{B} = \{\gb_1, \gb_2, \cdots\}$ be an infinite set of pairwise coprime ideals. Let $R$ be an Erdős sieve over $\mathcal{B}$ with weak light tails for $I_N$. If $\gb$ is an ideal coprime to every ideal in $\mathcal{B}$, then for every $x,b \in \co_K$, $$d_I((x+\fr) \cap (b+\gb)) = \frac{d_I(\fr)}{N(\gb)}.$$
	Additionally, we have for any $i$ that  $$d_I(\fr \cap(x+\gb_i))  = \frac{d_I(\fr)}{|R_i^c|}$$ if $x + \gb_i \not \in R_i $ 
\end{lemma}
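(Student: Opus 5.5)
The plan is to read both statements as densities of $R'$-free sets for suitably enlarged sieves $R'$, and then apply \Cref{lm:adding elements presaerves light tails} and \Cref{thm:Fr is generic}.

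\textbf{First claim.} Translation is harmless for the Følner sequence. Indeed,
$$|(x+\fr)\cap (b+\gb)\cap I_N| = |\fr\cap(b-x+\gb)\cap(I_N-x)|,$$
and $|(I_N-x)\Delta I_N| = o(|I_N|)$. So it suffices to treat $x=0$, with $b$ arbitrary.

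Let $R'$ be the sieve supported on $\{\gb\}\cup\cbr$, with $R'_\gb = (b+\gb)^c$ and $R'_i=R_i$ for all $i$. Coprimality of $\gb$ with every $\gb_i$ is exactly what makes $R'$ a sieve. It is Erdős, because only one term is added. If $N(\gb)=1$ the claim is trivial, so we may assume $R'_\gb\neq\co_K$. Then $\frp=\fr\cap(b+\gb)$.

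By \Cref{lm:adding elements presaerves light tails} (the first operation, adding a new ideal $\gb_0=\gb$), $R'$ has weak light tails for $I_N$. \Cref{thm:Fr is generic} (3) then gives
$$d_I(\frp)=\Bigl(1-\frac{N(\gb)-1}{N(\gb)}\Bigr)\prod_i(1-\vol(R_i))=\frac{d_I(\fr)}{N(\gb)},$$
where the last step uses \Cref{thm:Fr is generic} for $R$ itself.

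\textbf{Second claim.} Here $x+\gb_i\not\subset R_i$. Define $W$ by enlarging $R_i$ to $W_i=(x+\gb_i)^c$, which contains $R_i$ and is not all of $\co_K$, and keep $W_j=R_j$ for $j\neq i$. Then $\cf_W=\fr\cap(x+\gb_i)$.

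Reorder $\cbr$ so that $\gb_i$ comes first. Weak light tails survives this by \Cref{lm:Light tails invariant under }. Then \Cref{lm:adding elements presaerves light tails} (the second operation, with $W_1\supset R_1$) shows that $W$ has weak light tails. Applying \Cref{thm:Fr is generic} gives
$$d_I(\cf_W)=\frac{1}{N(\gb_i)}\prod_{j\neq i}(1-\vol(R_j))=\frac{1}{N(\gb_i)}\cdot\frac{d_I(\fr)}{1-|R_i|/N(\gb_i)}=\frac{d_I(\fr)}{|R_i^c|}.$$

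\textbf{Main obstacle.} The only delicate point is bookkeeping. The lemma on adding classes is stated for index $1$ and for a new index $0$, so the reordering lemma is what lets us apply it at an arbitrary index $i$. We also need the nondegeneracy conditions ($R'_\gb\neq\co_K$, $W_i\neq\co_K$) so that $R'$ and $W$ are genuine sieves. The density computations themselves are immediate from the product formula.
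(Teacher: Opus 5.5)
Your proposal is correct and follows the paper's proof: enlarge the sieve by a new ideal for the first claim and by an enlarged class at index $i$ for the second, then apply the adding lemma, the reordering lemma and Theorem~\ref{thm:Fr is generic}. The one difference is that you spell out the Følner translation step, which the paper uses without comment when it writes $d_I(x+\mathcal{F}_{R'})=d_I(\mathcal{F}_{R'})$.
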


\begin{proof}
	We consider the sieve $R'$ defined by $R'_0 = (b-x+\gb)^c$ and $R'_i = R_i$ for $i > 0$. Since $R$ has weak light tails, so does $R'$ by  \Cref{lm:adding elements presaerves light tails}. We have $$\frp = \left(\bigcup_{i>0}R_i \cup R'_0\right)^c = \fr \cap (b-x+\gb),$$ and so $d_I((x+\fr)\cap (b+\gb)) = d_I(x+\frp) = d_I(\frp),$ which is equal to $d_I(\fr)N(\gb)^{-1}$ by  \Cref{thm:Fr is generic}.

	To show the second result, consider the sieve $W$ defined by $W_i = (x+\gb_i)^c$, and $W_j = R_j$ if $j \neq i$. We have $$\fr \cap (x+\gb_i) = \left( \bigcap_{j \neq i} R_j^c \cap R_i^c \right) \cap (x+\gb_i) =  \bigcap_{j \neq i} R_j^c \cap (R_i^c \cap (x+\gb_i)) = \bigcap_{j \neq i} W_j^c \cap (x+\gb_i) = \cf_W. $$  
	We have that $R_i \subset W_i $. Therefore, by  \Cref{lm:adding elements presaerves light tails}, if $R$ has weak light tails so does $W$ (we can assume that $i =1$ by  \Cref{lm:Light tails invariant under }), and we get \begin{equation*}
		d_I(\cf_W) = d_I(\fr)\left(1- \frac{|R_i|}{N(\gb_i)}\right)^{-1}\left(1- \frac{N(\gb_i)-1}{N(\gb_i)}\right) = d_I(\fr) \frac{N(\gb_i)-(N(\gb_i)-1)}{N(\gb_i)-|R_i|} = \frac{d_I(\fr)}{|R_i^c|}. \qedhere
	\end{equation*}
\end{proof}

More generally, suppose that we have a collection of sets $A_i \subset \co_K$ and we want to know the density of $\fr$ intersected with $A_i+\gb_i$. Using  \Cref{lm: Fr intercepted by ideal}, this density can be computed as follows.

\begin{lemma}
	\label{lm: Fr intercepted by multiple ideals}
	Let $R$ be an Erdős sieve with weak light tails for some Følner sequence $I_N$. Let $A_{i_j} \subset R_{i_j}^c$ be a collection of finite sets, with $1 \leq j \leq k$, such that for any distinct $a,b \in A_{i_j}$, $a + \gb_{i_j} \neq b + \gb_{i_j}$. Then
	
	$$d_I\left(\fr \cap \bigcap_{j=1}^k(A_{i_j} + \gb_{i_j}) \right) = d_I(\fr) \prod_{j=1}^{k}\frac{|A_{i_j}|}{|R_{i_j}^c|}.$$ 
\end{lemma}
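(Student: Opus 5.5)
We argue exactly as in the second part of \Cref{lm: Fr intercepted by ideal}, replacing the single class $x+\gb_i$ by the finite union $A_{i_j}+\gb_{i_j}$. Fix the indices $i_1,\dots,i_k$; we may assume they are distinct, since otherwise we first merge the corresponding sets, although the statement implicitly treats them as distinct. Define the sieve $W$ supported on $\cb_R$ by
$$W_{i_j} = (A_{i_j}+\gb_{i_j})^c \quad (1\le j\le k), \qquad W_i = R_i \text{ otherwise.}$$

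The first step is to check that $W$ is a legitimate sieve containing $R$ coordinatewise. Since $A_{i_j}\subset R_{i_j}^c$, we have $R_{i_j}\subset W_{i_j}$. Moreover $W_{i_j}$ is a finite union of classes mod $\gb_{i_j}$, and it is proper as long as $A_{i_j}\neq\emptyset$; if some $A_{i_j}$ is empty, both sides of the claimed identity are $0$ and there is nothing to prove. Next we identify the free set:
$$\cf_W = \bigcap_{i\notin\{i_j\}} R_i^c \cap \bigcap_{j} (A_{i_j}+\gb_{i_j}) = \fr\cap\bigcap_j (A_{i_j}+\gb_{i_j}),$$
where the second equality uses $A_{i_j}+\gb_{i_j}\subset R_{i_j}^c$.

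The second step is to transfer weak light tails from $R$ to $W$. By \Cref{lm:Light tails invariant under } we may reorder $\cb_R$ so that the $i_j$ become $1,\dots,k$. We then apply \Cref{lm:adding elements presaerves light tails} $k$ times, each time enlarging one of the first coordinates. The lemma is stated for index $1$, but reordering again before each application, or simply noting that its proof works verbatim for any finite set of indices, handles this. Hence $W$ is Erdős with weak light tails for $I_N$.

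Finally, \Cref{thm:Fr is generic} applied to both $W$ and $R$ gives
$$d_I(\cf_W)=\prod_i(1-\vol(W_i)) = d_I(\fr)\prod_{j=1}^k \frac{1-\vol(W_{i_j})}{1-\vol(R_{i_j})}.$$
The hypothesis that the elements of $A_{i_j}$ are pairwise incongruent mod $\gb_{i_j}$ gives $1-\vol(W_{i_j}) = |A_{i_j}|/N(\gb_{i_j})$, while $1-\vol(R_{i_j}) = |R_{i_j}^c|/N(\gb_{i_j})$. Their ratio is $|A_{i_j}|/|R_{i_j}^c|$, which yields the formula.

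The only delicate point is the bookkeeping in the transfer step, namely applying \Cref{lm:adding elements presaerves light tails} simultaneously at several indices. The monotonicity argument in that lemma's proof works directly here: for $L\ge \max_j i_j$, enlarging the first $L$ coordinates only shrinks $\bigcup_{i>L}W_i\setminus\bigcup_{j\le L}W_j$.
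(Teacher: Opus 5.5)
Your proof is correct and uses the same mechanism as the paper: encode the congruence conditions by enlarging finitely many coordinates of $R$, transfer weak light tails via \Cref{lm:adding elements presaerves light tails}, and read off the density from \Cref{thm:Fr is generic}. The only difference is that you modify all $k$ coordinates at once and compute the product directly, whereas the paper first handles one index via \Cref{lm: Fr intercepted by ideal} and additivity over the classes of $A_{i_j}$, then inducts on $k$ using the sieve modified at $i_1,\dots,i_{k-1}$.

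One small correction: drop the remark that repeated indices can be merged. If $i_1=i_2$, the claimed formula fails in general: it predicts the factor $|A_{i_1}|^2/|R_{i_1}^c|^2$, while the true factor is $|A_{i_1}|/|R_{i_1}^c|$. So distinctness of the $i_j$ is a genuine implicit hypothesis, not something you can reduce to.
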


\begin{proof}
	For any distinct $a,b \in A_{i_j}$ we have $a + \gb_{i_j} \neq b + \gb_{i_j}$, so $$d_I(\fr \cap (A_{i_j} + \gb_{i_j})) = \sum_{a \in A_{i_j}}d_I(\fr \cap (a+\gb_{i_j} )) = d_I(\fr)\frac{|A_{i_j}|}{|R_{i_j}^c|}$$ by  \Cref{lm: Fr intercepted by ideal}. The result follows by induction. Assume that it holds for $k-1$. Take the sieve $R'$ defined by $R'_{i} = (A_{i_j} + \gb_{i_j})^c$ if $i = i_j$ for some $1\leq j \leq k-1$ and $R'_i = R_i$ otherwise. By repeatedly using  \Cref{lm:adding elements presaerves light tails}, we have that $R'$ has weak light tails. We get $$d_I\left(\fr \cap \bigcap_{j=1}^k(A_{i_j} + \gb_{i_j}) \right) = d_I(\frp \cap (A_{i_k} + \gb_{i_k})) =  d_I(\frp)\frac{|A_{i_k}|}{|R_{i_k}^c|} = d_I(\fr)\prod_{j=1}^{k}\frac{|A_{i_j}|}{|R_{i_j}^c|},$$ the last equality following from the induction hypothesis. 
\end{proof}

\begin{example}
	Let $R$ be the squarefree sieve $R_i = p_i^2\z$. Then it is well known that $d(\fr) = \zeta(2)^{-1} = 6/\pi^2$. By  \Cref{lm: Fr intercepted by multiple ideals} it follows that the density of even squarefree numbers is $(2/3)\zeta(2)^{-1} = 4/\pi^2$ (this result was shown in \cite{Jameson} for example).
\end{example}

\begin{definition}
	\label{def: local global principle}
	We say that a sieve $R$ satisfies the \index{Local Global Property}local global principle, if after choosing any finite subset $S$ of $\cb$ and $x_\gb$ such that $x_\gb \not \in R_\gb$ for any $\gb \in S$, we can find infinitely many $y\in \fr$ such that $y \equiv x_\gb \mod \gb$. 
\end{definition}

In \cite{Fabian} (from which we take the name 'local global principle' for this property of $R$) it is pointed out that there is a close relation between light tails of $R$ and the local global principle of $R$. By   \Cref{lm: Fr intercepted by multiple ideals}, sieves with weak light tails for some $I_N$ not only satisfy the local global principle, but in fact, for any choice of $S$ and $x_\gb$, the density of $y \in \fr$ congruent to $x_\gb$ for every $\gb \in S$ is given by \begin{equation}
	\label{eq: local global density}
	d_I(\fr)\prod_{\gb \in S}\frac{1}{|R_{\gb}^c|}.
\end{equation} Yet, this is not an equivalence, as we show in the next example.

\begin{example}
	Consider the sieves $W$ and $W'$ from \Cref{ex:Sieve with weak but not strong light tails}. We know that $W$ has weak light tails for $I_N = [0,N]$, while $W'$ does not. We have that $\cb_W = \{p_i^2:i \in \mathbb{N}\}$, while $\cb_{W'} = \cb_W\setminus\{4\}$. Take any finite $S \subset \mathbb{N}\setminus\{1\}$, and choose $x_i \not \in W'_i$ for $i \in S$. We want to show that the density of those $y \in \cf_{W'}$ not congruent to $x_i \mod p_i^2$ is given by \Cref{eq: local global density}, in spite of this sieve not having weak light tails for $I_N$. 
	
	Indeed, since $\cf_W = \cf_{W'}$, and $W$ has weak light tails, we know that the density of such $y$ is given by $$d_I(\cf_W)\prod_{i \in S}\frac{1}{|W_{i}^c|} = d_I(\cf_{W'})\prod_{i \in S}\frac{1}{|(W'_{i})^c|}, $$ as we wanted to show.
\end{example}

This means that the local global property does not imply weak light tails. This was to be expected, since while the local global property does not depend on a Følner sequence $I_N$, weak light tails does (as shown by \Cref{rmk: no Erdős sieve has weak light tails for every Følner sequence}). Yet, this example shows that even a stronger form of the local global principle depending on some $I_N$, where we assume that the density (with respect to $I_N$) of $y\in \fr$ that are congruent to $x_\gb \mod \gb$, for some finitely many $x_\gb \not \in R^c_\gb$, is given by \Cref{eq: local global density}, does not imply that $R$ has weak light tails for $I_N$.

In general, the local global property is far less well behaved than the weak light tails property.  Yet, contrarily to weak light tails, the local global principle is preserved by removing ideals from $\cb_R$. Let $R$ be a sieve satisfying the local global principal, and $R^1$ the sieve supported on $\cb_R\setminus\{\gb_1\}$ and defined by $R^1_i = R_{i+1}$. Then $R^1$ is supported over the set $\cb_R \setminus \{\gb_1\}$, and because we are sieving out less, we have $\fr \subset \mathcal{F}_{R^1}$. It is then clear that $R^1$ must also satisfy the local global principle. Any finite $S \subset\cb_R \setminus \{\gb_1\} $ is a subset of $\cb_R$, so given any number of congruence classes modulo $\gb$ with $\gb$ in $S$, we can find a solution in $\fr$ by the local global principle of $R$, which will also be in $\mathcal{F}_{R^1}$.

Contrary to weak light tails, the local global principle for sieves is not preserved by adding ideals (which we have shown holds for weak light tails in  \Cref{lm:adding elements presaerves light tails}) as the following example shows. 

\begin{example}
	
	Let $R$ be the sieve defined by $R_1 = \{0,1\}+4\z$, $R_{2i} = 1+4i + p_{2i}^2\z$ and  $R_{2i+1} = 1-4i + p_{2i}^2\z$. Proceeding as we have done in \Cref{ex:Sieve with weak but not strong light tails}, we can show that $R$ has weak light tails for $B_N$. Let $R'$ be the sieve supported on $\{p_i: i \geq 2\}$ and defined by $R'_i = R_i$ if $i \geq 2$. The sieve $R'$ satisfies the local global principle because $R$ does. Yet, the sieve $R''$ defined by $R''_1 = 0+4\z$ and $R''_i = R_i$ for $i \geq 2$ will not satisfy the local global principle, since the only element of $1+4\z$ in $\cf_{R''}$ is $1$, even though $(1+4\z) \cap R''_1 = \emptyset$. Since $R''$ was obtained from $R'$ by adding to it the congruence class $0+4\z$, it follows that the local global principle is not preserved by adding congruence classes. Additionally, since $R''$ can be obtained from $R$ by removing the congruence class $1+4\z$ from $R_1$, it follows that the local global principle is also not preserved by removing classes.
	
\end{example}

		Below we summarize how the different operations preserve (or don't) each property.
		
		\begin{table}[h]
			\label{table: properties}
			\begin{tabular}{l|l|l|l}
				& Local global property & Weak light tails for $I_N$ & Strong light tails for $I_N$ \\ \hline
				Adding congruence classes   & does not preserve     & preserves                  & preserves                    \\
				Removing congruence classes & does not preserve     & does not preserve          & preserves                    \\
				Removing ideals             & preserves             & does not preserve          & preserves                   
			\end{tabular}
			\caption{How different operations change the properties of an Erdős sieve $R$.}
		\end{table}
		
		As can be seen, the strong light tails property is much more stable. Now, say that we have a sieve $R$ with weak light tails for some $I_N$, such that the weak light tails are preserved after repeatedly removing congruence classes. It is natural to ask if this sieve must have strong light tails. We now show that this is the case.

		\begin{theorem}
			\label{thm: R Erdős weak tails to strong tails}
			Let $R$ be an Erdős sieve and $I_N$ any Følner sequence. We have that $R$ has strong light tails for $I_N$ if and only if for every $L$, the sieve $R^L$ supported on $\cb_{R^L} = \{\gb_{L+1},\gb_{L+2},\dots \}$ and defined by $R^L_i = R_{L+i}$ has weak light tails for $I_N$.
		\end{theorem}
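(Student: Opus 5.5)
The forward direction is immediate. Strong light tails passes to every tail sieve $R^L$: for fixed $L$ and any $M$,
$$\bigcup_{i>M} R^L_i = \bigcup_{i>L+M} R_i,$$
so $\overline{d}_I\big(\bigcup_{i>M}R^L_i\big) \to 0$ as $M\to\infty$. Each $R^L$ is clearly Erdős. So every $R^L$ has strong light tails, and in particular weak light tails.

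For the converse, I would assume every $R^L$ has weak light tails for $I_N$. Applying \Cref{thm:Fr is generic} (equivalently \Cref{LM: dfr smaller than nu and equivalence}) to $R^L$ gives
$$d_I(\cf_{R^L}) = \prod_{i>L}\big(1-\vol(R_i)\big) =: P_L .$$
Now $\cf_{R^L}$ is exactly the complement of $\bigcup_{i>L} R_i$. Hence $\bigcup_{i>L}R_i$ has a density with respect to $I_N$, namely $1-P_L$, and in particular
$$\overline{d}_I\Big(\bigcup_{i>L}R_i\Big) = 1-P_L .$$
Since $R$ is Erdős, $\sum_i \vol(R_i)<\infty$. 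By \Cref{lm: finite sum finite products} the full product $\prod_i(1-\vol(R_i))$ is positive and convergent, so its tails satisfy $P_L \to 1$ as $L\to\infty$. Therefore $\overline{d}_I\big(\bigcup_{i>L}R_i\big)\to 0$, which is strong light tails.

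There are two small points to check along the way. First, $R^L$ is a genuine sieve: its support is still infinite, pairwise coprime and consists of invertible ideals. Second, $R^L$ is Erdős, which is needed before \Cref{thm:Fr is generic} can be applied. Both follow directly from the corresponding properties of $R$.

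The only step needing care is the identity $\cf_{R^L} = \co_K\setminus\bigcup_{i>L}R_i$ together with the conversion of a density for this set into a density for its complement. This is elementary, since $|I_N\cap A^c|/|I_N| = 1-|I_N\cap A|/|I_N|$ for any $A$. Beyond that I expect no real obstacle.
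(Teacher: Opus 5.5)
Your proposal is correct and follows the paper's proof essentially verbatim. The forward direction holds because discarding finitely many $R_i$ does not affect strong light tails. For the converse, apply \Cref{thm:Fr is generic} to each $R^L$ to get $d_I\bigl(\bigcup_{i>L}R_i\bigr) = 1-\prod_{i>L}(1-\vol(R_i))$, which tends to $0$ because $R$ is Erdős.
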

		
		\begin{proof}

			From the definition of strong light tails, it is clear that this property is invariant, under any change of a finite number of the $R_i$. Therefore, if $R$ has strong light tails, so does every $R^L$, which implies that every $R^L$ will have weak light tails.
			
			We now show the opposite implication. Since $R$ is Erdős, so is every $R^L$. By definition,  $\mathcal{F}_{R^{L}}$ is equal to $ \left(\bigcup_{j>L} R_j\right)^c$, so $$d_I\left(\bigcup_{j>L}R_j\right) = d_I(\mathcal{F}_{R^{L}}^c) = 1-d_I(\mathcal{F}_{R^{L}}) = 1-\prod_{j>L}\left(1-\frac{|R_j|}{N(\gb_j)}\right),$$ where in the last step we use the fact that $R^{L}$ has weak light tails and apply  \Cref{thm:Fr is generic}.  Since $R$ is Erdős, the product goes to $1$ as $L$ goes to infinity, which concludes the proof.
		\end{proof}
		
		Notice that if we replaced our hypothesis that $R^L$ has weak light tails for every $L$, by the fact that there is a sequence $L_i$ such that $R^{L_i}$ has weak light tails for every $i$, the result would still hold. Note also that in order for this result to hold, we must be able to remove any number of ideals from $R$, with it retaining the weak light tails property. It is not true that if $R$ has weak light tails for $B_N$, and for every $\gb \in \cb$, the sieve $W^{(\gb)}$ supported on $\cb_R \setminus \{\gb\}$ by $W^{(\gb)}_\ga = R_\ga$ also has weak light tails, then $R$ has strong light tails. We provide an example.
		
		\begin{example}
			Let $R$ be the sieve defined by $R_1 = 2\z$, $R_2 = 3\z$ with $$R_{2i} = 6i + p_i^2\z \hspace{5pt} \text{ and } \hspace{5pt} R_{2i+1} = -6i + p_i^2\z$$ for any $i \geq 2$. It is not hard to show that $R$ has weak light tails by proceeding as in \Cref{ex:Sieve with weak but not strong light tails}. Additionally, any sieve obtained from $R$ by removing exactly one ideal will also have weak light tails. This is because if $R'$ is a sieve obtained from $R$ by removing exactly one ideal, then there is still some $i$ such that $ 6\z \subset R'_i$ given that $6\z$ is a subset of both $R_1$ and $R_2$.
			
			Yet, it is clear that $R$ does not have strong light tails for $B_N$, as we have that $$\left|B_N \cap \bigcup_{i>L}R_i\right| > \frac{N}{3} - L-2.$$
		\end{example}

		\Cref{thm: R Erdős weak tails to strong tails} points to us the necessity of studying which sieves with weak light tails remain with weak light tails after removing ideals. We now study such properties. Our guiding example is the sieve $R$ given in \Cref{ex:Sieve with weak but not strong light tails}, which  has weak but not strong light tails. The key feature of this sieve, is that one of the congruence classes in $R$ is sieved out by the other classes. We want to show that when this does not happen for any class then $R$ will have strong light tails. We start with the following lemma.
		
		\begin{lemma}
			\label{lm: removing ideals}
			Let $R$ and $R'$ be Erdős sieves both supported on the set $\cb = \{\gb_0,\gb_1,\gb_2,\dots\}$ such that $R'_0  = R_0 \cup  (x+\gb_0)$ for some $x \not \in R_0$, and $R'_i = R_i$ for every $i>0$ ($R_0$ is allowed to be the empty set here). If $R'$  has weak light tails for $I_N$ as well as
			\begin{equation}	
				\label{eq: for lemma removing ideals}
				\lim_{L \rightarrow \infty } \overline{d}_I\left((x+\gb_0) \cap \bigcup_{i > L}R_i \setminus \bigcup_{1 \leq j \leq L} R_j\right) =0,
			\end{equation}	
			then $R$ has weak light tails for $I_N$.
		\end{lemma}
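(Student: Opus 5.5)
The plan is to verify the definition of weak light tails for $R$ directly, using the ordering $\cb = \{\gb_0,\gb_1,\gb_2,\dots\}$ given in the statement. Shifting indices by one changes nothing, and in any case the property does not depend on the ordering by \Cref{lm:Light tails invariant under }. Since $R$ is assumed to be Erdős, we only need to show that
$$\lim_{L\rightarrow\infty}\overline{d}_I\left(\bigcup_{i>L}R_i\setminus\bigcup_{0\leq j\leq L}R_j\right)=0.$$

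The key step is a set inclusion. Write $E_L := \bigcup_{i>L}R_i\setminus\bigcup_{0\leq j\leq L}R_j$, and take $y\in E_L$. We split into two cases according to whether $y\in x+\gb_0$.

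If $y\notin x+\gb_0$, then $y\notin R_0\cup(x+\gb_0)=R'_0$. Since $R'_i=R_i$ for $i>0$, this gives $y\in \bigcup_{i>L}R'_i\setminus\bigcup_{0\leq j\leq L}R'_j$.

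If instead $y\in x+\gb_0$, then $y$ lies in $(x+\gb_0)\cap \bigcup_{i>L}R_i\setminus\bigcup_{1\leq j\leq L}R_j$. Hence
$$E_L\subset\left(\bigcup_{i>L}R'_i\setminus\bigcup_{0\leq j\leq L}R'_j\right)\cup\left((x+\gb_0)\cap\bigcup_{i>L}R_i\setminus\bigcup_{1\leq j\leq L}R_j\right).$$

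Next, upper density $\overline{d}_I$ is monotone and subadditive, because $\limsup$ of a sum is at most the sum of the $\limsup$s. So $\overline{d}_I(E_L)$ is bounded by the sum of the upper densities of the two sets on the right. The first term tends to $0$ as $L\rightarrow\infty$ because $R'$ has weak light tails for $I_N$. The second term tends to $0$ by hypothesis \Cref{eq: for lemma removing ideals}. Therefore $\lim_{L}\overline{d}_I(E_L)=0$, which is exactly weak light tails for $R$.

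There is no real obstacle in this argument. The only point needing care is the inclusion itself, namely that an element outside $x+\gb_0$ and outside $R_0$ is automatically outside $R'_0$. That follows immediately from $R'_0=R_0\cup(x+\gb_0)$. The argument also works when $R_0=\emptyset$.
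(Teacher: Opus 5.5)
Your proof is correct and is essentially the paper's argument. The paper also splits $\bigcup_{i>L}R_i\setminus\bigcup_{0\le j\le L}R_j$ according to membership in $x+\gb_0$, identifies the part outside $x+\gb_0$ with the tail set of $R'$ and the part inside with the set in \Cref{eq: for lemma removing ideals}, and concludes by subadditivity of the $\limsup$ (you use inclusions where the paper writes equalities, which is all the upper bound needs).
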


		\begin{proof}
			We start by writing $X_L = R_0^c \cap R_1^c\cap \dots \cap R_L^c$ with $L \geq 1$. Then, the light tails condition for $R$ can be written as $$\lim_{L \rightarrow \infty} \limsup_{N \rightarrow \infty} \frac{|\fr^c \cap X_L \cap I_N|}{|I_N|} = 0.$$ We decompose $\fr^c$ into those elements which are in $x+\gb_0$, and those which are not, to obtain $$|\fr^c \cap X_L \cap I_N| = |(\fr^c\cap (x+\gb_0)) \cap X_L \cap I_N| + |(\fr^c\cap (x+\gb_0)^c) \cap X_L \cap I_N|,$$ and since $(x+ \gb_0)^c \cap R_0^c = (R_{0}')^c$, we get that $$(\fr^c\cap (x+\gb_0)^c) \cap X_L \cap I_N = I_N \cap \bigcup_{i > L}R'_i \setminus \bigcup_{0 \leq j \leq L} R'_j.$$
			
			It follows that because $R'$ has weak light tails, we have  $$\lim_{L \rightarrow \infty} \limsup_{N \rightarrow \infty} \frac{ |(\fr^c\cap (x+\gb_0)^c) \cap X_L \cap I_N|}{|I_N|} = 0. $$On the other hand, because $(x+\gb_0)\cap (R_0)^c = x+\gb_0$, we have that $$(x+\gb_0)\cap X_L = (x+\gb_0) \setminus  \bigcup_{1 \leq j \leq L} R_j.$$ so we have $$\lim_{L \rightarrow \infty} \limsup_{N \rightarrow \infty} \frac{  |(\fr^c\cap (x+\gb_0)) \cap X_L \cap I_N|}{|I_N|} =  \lim_{L \rightarrow \infty } \overline{d}_I\left((x+\gb_0) \cap \bigcup_{i > L}R_i \setminus \bigcup_{1 \leq j \leq L} R_j\right) =0, $$ where the second equality is by hypothesis. The result follows.
		\end{proof}
		
		With this lemma, it is now easy to show the following theorem, which formalizes the idea that if $R$ is a sieve with weak light tails, and no congruence class is being sieved out by the others, then $R$ has strong light tails.
		
		\begin{theorem}
			\label{thm: R has strong tails if no congruence is sieved out}
			If $R$ is an Erdős sieve with weak light tails for $I_N$, such that for every $j\in \mathbb{N}$ and $x_j \in R_j$, we have 
			\begin{equation}
				\label{eq: for R has strong tails if no congruence is sieved out}
				\lim_{L \rightarrow \infty } \overline{d}_I\left((x_j+\gb_j) \cap \bigcup_{i > L}R_i\right) = 0,
			\end{equation}
			then $R$ has strong light tails for $I_N$.
		\end{theorem}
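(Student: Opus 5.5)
Via \Cref{thm: R Erdős weak tails to strong tails}, it suffices to show that each tail sieve $R^L$ (supported on $\{\gb_{L+1},\gb_{L+2},\dots\}$ with $R^L_i=R_{L+i}$) has weak light tails for $I_N$. We do not remove an ideal in one step. Instead we strip its congruence classes one at a time, using \Cref{lm: removing ideals} at each stage. By induction on $L$ it is enough to pass from $R^{L-1}$ to $R^L$, i.e.\ to remove the single ideal $\gb_L$. After reordering (allowed by \Cref{lm:Light tails invariant under }), call this ideal $\gb_0$ and place it first.

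Write $R_0=\{x_1,\dots,x_m\}+\gb_0$ with the classes distinct mod $\gb_0$. For $0\le k\le m$ let $W^{(k)}$ be the sieve supported on $\{\gb_0,\gb_1,\dots\}$ with $W^{(k)}_0=\{x_1,\dots,x_k\}+\gb_0$ and all other classes equal to those of the current sieve. Then $W^{(m)}$ is the current sieve, which has weak light tails by the inductive hypothesis. Now apply \Cref{lm: removing ideals} with $R'=W^{(k)}$ and $R=W^{(k-1)}$, so $x=x_k$. The hypothesis \Cref{eq: for lemma removing ideals} asks that
$$\lim_{L'\to\infty}\overline{d}_I\left((x_k+\gb_0)\cap\bigcup_{i>L'}R_i\setminus\bigcup_{1\le j\le L'}R_j\right)=0.$$
This is bounded by $\overline{d}_I\big((x_k+\gb_0)\cap\bigcup_{i>L'}R_i\big)$, which tends to $0$ by the assumption \Cref{eq: for R has strong tails if no congruence is sieved out} applied to $x_k\in R_0$.

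Two points need care. First, the sieves arising in the induction have fewer ideals than $R$, but their tail unions $\bigcup_{i>L'}$ are contained in the corresponding tails of $R$, so the hypothesis transfers unchanged. Second, \Cref{lm: removing ideals} allows $R_0=\emptyset$, so after $m$ steps we reach $W^{(0)}$, in which $\gb_0$ carries no classes. This is the sieve with $\gb_0$ deleted, and it has the same free set and the same Mirsky product. Hence, via \Cref{LM: dfr smaller than nu and equivalence}, it has weak light tails exactly when $R^L$ does. (This needs the remark in the definition of sieve permitting empty $R_\gb$, or else a direct check through \Cref{thm:Fr is generic} that the densities agree.)

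Iterating over $L$ then gives weak light tails for every $R^L$, and \Cref{thm: R Erdős weak tails to strong tails} yields strong light tails for $R$. I expect the main obstacle to be the bookkeeping in \Cref{lm: removing ideals}: its indices of the tail start at $1$ after $\gb_0$ has been moved to position $0$, and one must check that the reordering together with the equivalence of empty-class sieves does not break the limit statements. The analytic content reduces entirely to the hypothesis.
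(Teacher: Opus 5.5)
Your proposal is correct and follows the paper's proof. The hypothesis yields \Cref{eq: for lemma removing ideals} and is inherited by the tail sieves, so repeated use of \Cref{lm: removing ideals} strips $\gb_L$ from $R^{L-1}$ one class at a time, and \Cref{thm: R Erdős weak tails to strong tails} then gives strong light tails. Your extra bookkeeping (removing one class at a time, the empty-$R_0$ case, and the containment of the tail unions) just makes explicit what the paper's three-line proof leaves implicit.
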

		
		\begin{proof}
			
			We see that if $R$ is an Erdős sieve such that, for every $x_j \in R_j$, \Cref{eq: for R has strong tails if no congruence is sieved out} holds, then the same will happen for a sieve $R'$ obtained from $R$ by removing an ideal. If $R$ satisfies \Cref{eq: for R has strong tails if no congruence is sieved out}, then \Cref{eq: for lemma removing ideals} also holds for $R$, so if $R$ is a sieve with weak light tails, and $R'$ is the sieve obtained by removing $x_j + \gb_j$ from $R$, \Cref{lm: removing ideals} shows that $R'$ will have weak light tails. The result then follows directly from  \Cref{thm: R Erdős weak tails to strong tails}.
		\end{proof}

		\Cref{thm: R has strong tails if no congruence is sieved out} shows that if $R$ has weak light tails for $I_N$, but not strong light tails,  then there must be some $j \in \mathbb{N}$ and $x_j \in R_j$ such that $(x_j + \gb_j)$ has a subset of positive $I_N-$density that is sieved out by the $R_i$ with $i \neq j$. If $R$ is such a sieve, then by taking some $y$ such that $x_j \not \in y+ R_j$, we would expect the sieve $R'$ defined by $R'_j = y+R_j$ and $R'_i = R_i$ for $i \neq j$ not to have weak light tails for $I_N$.
		
		It is natural to ask for the inverse of this, that is, if we have a sieve $R$ with weak light tails for $I_N$ such that translations of $R_i$ preserve the weak light tails condition, must it have strong light tails? Indeed, this is the case, as we now show.

		\begin{theorem}
			\label{thm: R has strong light tails if finite translations}
			Let $R$ be an Erdős sieve with weak light tails with respect to some Følner sequence $I_N$. Suppose that for any finite set $A \subset \mathbb{N}$ and choice of $x_i \in \co_K$ for $i \in A$, the sieve $R'$ defined by $R'_i = x_i + R_i$ with $i \in A $, and $R'_i = R_i$ otherwise, also has weak light tails. Then $R$ has strong light tails for $I_N$.
		\end{theorem}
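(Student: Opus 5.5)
The plan is to compute the density of the complement of the tail $\bigcup_{i>L}R_i$ directly. For each fixed $L$ we average over all translations of the first $L$ classes. The hypothesis then gives a density for every translated sieve, and the averaging washes out the dependence on the first $L$ classes.

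Fix $L$ and let $G_L := \prod_{i\leq L}\co_K/\gb_i$, a finite set. For $t=(t_1,\dots,t_L)\in G_L$ (choosing representatives in $\co_K$), let $R^{(t)}$ be the sieve with $R^{(t)}_i = t_i+R_i$ for $i\leq L$ and $R^{(t)}_i=R_i$ for $i>L$. Since $R_i=S_i+\gb_i$, the set $t_i+R_i$ depends only on $t_i \bmod \gb_i$, so $R^{(t)}$ is well defined. Each $R^{(t)}$ is Erdős with $\vol(R^{(t)}_i)=\vol(R_i)$. By hypothesis (with $A=\{1,\dots,L\}$) it has weak light tails for $I_N$. \Cref{thm:Fr is generic} then gives
\[ d_I(\cf_{R^{(t)}}) = P := \prod_{i\geq 1}\left(1-\vol(R_i)\right) \quad\text{for every } t\in G_L. \]

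Next, count pointwise. Write $T_L:=\bigcup_{i>L}R_i$ and $P_L := \prod_{i\leq L}(1-\vol(R_i))$. For $y\in\co_K$ we have $y\in\cf_{R^{(t)}}$ if and only if $y\notin T_L$ and $y-t_i\notin R_i$ for all $i\leq L$. For fixed $y\notin T_L$, the number of $t\in G_L$ satisfying the second condition is exactly $\prod_{i\leq L}|R_i^c| = |G_L|P_L$, since the conditions on the coordinates $t_i$ are independent and each excludes exactly $|R_i|$ residues. Summing over $y\in I_N$ gives the exact identity
\[ \frac{1}{|G_L|}\sum_{t\in G_L}|I_N\cap \cf_{R^{(t)}}| = P_L\,|I_N\setminus T_L|. \]
Divide by $|I_N|$ and let $N\to\infty$. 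The left side is a finite average of sequences converging to $P$, so it converges to $P$. Hence $d_I(\co_K\setminus T_L)$ exists and equals $P/P_L=\prod_{i>L}(1-\vol(R_i))$, which is positive because $R$ is Erdős (\Cref{lm: finite sum finite products}).

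It follows that $d_I(T_L)$ exists and equals $1-\prod_{i>L}(1-\vol(R_i))$, which tends to $0$ as $L\to\infty$ since $\sum_i\vol(R_i)<\infty$. This is exactly strong light tails. The step that needs care is the exact pointwise count over $G_L$: it relies on the $\gb_i$ being pairwise coprime only through the independence of the coordinates $t_i$, and on each $t_i+R_i$ depending only on $t_i \bmod \gb_i$. An alternative route through \Cref{thm: R has strong tails if no congruence is sieved out} looks messier, so I would use this averaging argument.
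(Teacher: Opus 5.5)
Your proof is correct, but it takes a different route from the paper's.

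The paper never computes $d_I\bigl(\bigcup_{i>L}R_i\bigr)$ directly. It fixes $M$, shows that the tail sieve $R^M$ (with $R^M_i=R_{M+i}$) has weak light tails, and then invokes \Cref{thm: R Erdős weak tails to strong tails}. The weak light tails of $R^M$ come from a covering argument. The finitely many sets $\bigcap_{j\le M}(x_j+R_j)^c$, for $x\in G_M$, cover $\co_K$. Hence $\bigcup_{i>L}R_i\setminus\bigcup_{M<j\le L}R_j$ is a finite union of the sets $\bigcup_{i>L}R_i\setminus\bigl(\bigcup_{j\le M}(x_j+R_j)\cup\bigcup_{M<j\le L}R_j\bigr)$. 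Each of these has upper density tending to $0$ by the hypothesis, and a union bound finishes.

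You do two things differently:
\begin{itemize}
\item You replace the cover by an exact averaging identity over $G_L$. That identity is correct: for each fixed $y\notin T_L$, exactly $\prod_{i\le L}|R_i^c|$ translations $t$ keep $y$ in $\cf_{R^{(t)}}$.
\item You feed in, for every translate, the exact density $d_I(\cf_{R^{(t)}})=P$ from \Cref{thm:Fr is generic}.
\end{itemize}
This inlines the proof of \Cref{thm: R Erdős weak tails to strong tails}. It yields the closed form $d_I\bigl(\bigcup_{i>L}R_i\bigr)=1-\prod_{i>L}(1-\vol(R_i))$ in one step, without passing through the tail sieves.

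Each approach buys something different:
\begin{itemize}
\item The paper treats the translates qualitatively, needing only that their upper densities vanish. It applies the density formula only once per tail sieve, and it fits the narrative that strong light tails means weak light tails of every $R^M$.
\item Your version is self-contained and quantitative. It relies on the full implication (2)$\Rightarrow$(3) of \Cref{thm:Fr is generic} for each of the $|G_L|$ translates.
\end{itemize}

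One small correction to your closing remark: the count over $G_L$ does not use pairwise coprimality at all, because $G_L$ is a product by definition. Coprimality enters only through \Cref{thm:Fr is generic}, which is what guarantees that every $R^{(t)}$ has density exactly $P$.
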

		
		\begin{proof}

			We want to show that for any $M$, the sieve $R^M$ defined by $R^M_i = R_{M+i}$ has weak light tails, and then apply  \Cref{thm: R Erdős weak tails to strong tails}. This is equivalent to showing that $$\lim_{L \rightarrow \infty}\overline{d}_I\left(\bigcup_{i > L} R_i \setminus \bigcup_{M < j \leq L} R_j \right) = 0.$$ Our hypothesis implies that for any choice of $x_j \in \co_K$ with $1 \leq j \leq M$ we have 
			$$\lim_{L \rightarrow \infty}\overline{d}_I\left(\bigcup_{i > L} R_i \setminus\left(\bigcup_{1 \leq j \leq M} (x_j + R_j) \cup  \bigcup_{M < j \leq L} R_j  \right) \right) = 0.$$
			
			Let $G_M$ be the finite set defined by $$G_M = \prod_{j=1}^M \co_K/\gb_j.$$ Then, from the equality $$\bigcup_{i > L} R_i \setminus\left( \bigcup_{1 \leq j \leq M} (x_j + R_j) \cup  \bigcup_{M < j \leq L} R_j  \right) = \left(\bigcup_{i > L} R_i \cap \bigcap_{M < j \leq L} R_j^c  \right)\cap \bigcap_{1 \leq j \leq M} (x_j + R_j)^c ,$$ it is clear that each of these sets is contained in $ \bigcup_{i>L}R_i \setminus \bigcup_{M < j \leq L} R_j$, and that the result will follow if we show that $$\bigcup_{(x_1,\dots,x_m)\in G_M}\left( \bigcup_{1 \leq j \leq M} (x_j + R_j)^c \right) = \co_K.$$ 
			
			To show this, let $y_j$ be a collection of elements of $\co_K$ chosen so that $y_j \in R_j^c$ for every $1 \leq j \leq M$. Let $y$ be any arbitrary element of $\co_K$ and pick the unique $x\in G_M$ such that $x_j \equiv y - y_j \mod \gb_j$ for every $j$. Then $y \in x_j +y_j + \gb_j \subset x_j+ R_j^c$ for every $j$, which shows that $y$ is in $\bigcap_{1 \leq j \leq M} (x_j + R_j)^c$. This concludes the proof.
		\end{proof}
		
		\begin{remark}
			Since any change in finitely many $R_i$ does not affect the strong light tails property  the result in  \Cref{thm: R has strong light tails if finite translations} is an equivalence. 
			
			Additionally, this shows that if $R$ has weak light tails for some $I_N$, but not strong light tails, then there is a finite set $A \subset \mathbb{N}$ and a collection of $x_i \in \co_K$ for $i \in A$ such that the sieve $R'$ defined by $R'_i = x+ R_i$ if $i \in A$ (and $R'_i=R_i$ otherwise) does not have weak light tails for $I_N$.
			
		\end{remark}

			\bigskip
			\footnotesize
			\noindent
			Francisco Ara\'{u}jo\\
			\textsc{Institute of Mathematics, Paderborn University, Warburger Str. 100, 33098 Paderborn, Germany}\par\nopagebreak
			\noindent
			\textit{E-mail address:} \texttt{faraujo@math.uni-paderborn.de}

			\end{document}